\documentclass[11pt,reqno,UTF8,twoside]{amsart}
\usepackage{mathrsfs}
\usepackage{amsfonts,amssymb,amsmath,amsthm}
\usepackage{cite}
\usepackage[colorlinks=true,citecolor=red]{hyperref}
\usepackage{titletoc}
\usepackage{geometry}
\usepackage{bm}
\usepackage{indentfirst}
\usepackage{graphicx}
\usepackage{float} 
\usepackage{booktabs}
\usepackage{longtable}
\usepackage{subfigure}
\usepackage{stmaryrd}
\usepackage{enumerate}
\usepackage{tikz}
\usepackage{ulem}
\usepackage{color,soul}
\usepackage{setspace}
\usepackage{stmaryrd}
\usepackage[pagewise]{lineno} 
\allowdisplaybreaks[2]
\usepackage{appendix}
\usepackage{cases}
\usepackage{todonotes}
\usepackage{enumitem}
\usepackage{comment}
\usepackage{graphicx}

\numberwithin{equation}{section}
\newtheorem{theorem}{Theorem}[section]
\newtheorem{lemma}[theorem]{Lemma}
\newtheorem{corollary}[theorem]{Corollary}
\newtheorem{proposition}[theorem]{Proposition}

\newtheorem{claim}[theorem]{Claim}
\newtheorem{definition}[theorem]{Definition}
\theoremstyle{remark}

\newtheorem{rmk}[theorem]{Remark}

\newcommand{\R}{\mathbb{R}}

\hypersetup{linkcolor=blue}

\makeatletter
\@namedef{subjclassname@2020}{\textup{2020} Mathematics Subject
Classification}
\makeatother

\begin{document}
    
\title[Mixing for the Chirikov map]{Quantitative Exponential Mixing for the Randomized Chirikov Standard Map}

\author[Z. Liu]{Ziyu Liu}
\address[Ziyu Liu]{School of Mathematics and Physics, University of Science and Technology Beijing, 100083, Beijing, China.}
\email{ziyu@ustb.edu.cn}

\author[Y. Shi]{Yankai Shi}
\address[Yankai Shi]{School of Mathematical Sciences, Peking University, 100871, Beijing, China.}
\email{shiyankai@stu.pku.edu.cn}

\subjclass[2020]{
37A25,
76F25,
37A50,
35Q49.
}
	
\keywords{Quantitative exponential mixing; Chirikov standard map; Enhanced dissipation; Passive scalars. }

\begin{abstract}
We investigate the mixing properties of a randomized  Chirikov standard map on $\mathbb{T}^2$. While the deterministic dynamics exhibit obstructions to global ergodicity, we establish  explicit  almost-sure quantitative exponential mixing when  kicking strengths are sufficiently large. To achieve this, we formulate a criterion for incompressible random dynamical systems, reducing quantitative exponential mixing to serval verifiable conditions. Additionally, we provide a milder parameter condition to derive qualitative exponential mixing and enhanced dissipation.
\end{abstract}

\maketitle
\setcounter{tocdepth}{1}

\tableofcontents

\section{Introduction}\label{sec: introduction}
Mixing by incompressible flows is a fundamental mechanism of major importance in natural sciences and engineering. Physically, it transfers scalar fluctuations from macroscopic spatial scales to microscopic ones. In purely advective regimes, this transfer is conservative and reversible over finite times; however, long-time evolution with diffusion causes irreversible information loss and promotes equilibration of temperature or chemical concentration. Thus, understanding mixing is crucial for studying passive scalar dynamics and turbulence.

\vspace{0.5em}

From a mathematical perspective, mixing is typically described from two viewpoints. In the Eulerian framework, the relevant  model is the advection-diffusion equation
\begin{equation}
    \label{eq:advection-diffusion}
    \begin{cases}
        \partial_t \rho + u \cdot \nabla \rho = \nu \Delta \rho, \\
        \rho \big|_{t=0} = \rho_0,
    \end{cases}
\end{equation}posed on a periodic domain $\mathbb{T}^2=\mathbb{R}^2/(2\pi\mathbb{Z})^2$. Here, $u\colon[0,+\infty)\times\mathbb{T}^2\rightarrow\mathbb{R}^2$ is a given divergence-free velocity field, $\nu\geq 0$ is the diffusivity constant. The solution $\rho$ to \eqref{eq:advection-diffusion} represents the concentration of a passive scalar and can be assumed to be mean-zero:
\begin{equation*}
    \int_{\mathbb{T}^2}\rho_t(x)\mathrm{d}x=0,\qquad \forall\,t\geq0.
\end{equation*}
Following \cite{MMP-05,Thiffeault-11}, mixing can usually be quantified by the negative Sobolev norms of $\rho_t$, i.e., 
\begin{equation}\label{negative sobolev}
    \|\rho_t\|_{\dot{H}^{-s}}^2:=\sum_{k\in\mathbb{Z}^2\setminus\{(0,0)\}}\left|k\right|^{-2s}\left|\hat{\rho}_k(t)\right|^2, \qquad s>0,
\end{equation}whose decay implies that the $L^2$-mass of the scalar is being pushed toward high frequency modes.

Meanwhile, from the Lagrangian viewpoint, the flow map $\phi_t$ is generated by the velocity field $u$, which is given by
\begin{equation}
    \label{eq:Lagrangian flow}
    \begin{cases}
       \mathrm{d}{\phi}_t(x) = u(t,\phi_t(x))\mathrm{d}t+\sqrt{2\nu}\mathrm{d}W_t,\\
        \phi_0(x)=x.
    \end{cases}
\end{equation}Here $\phi_t(x)\in\mathbb{T}^2$ denotes the position of a passive particle at time $t$. Indeed, by Feynman--Kac formula, the solution $\rho$ of \eqref{eq:advection-diffusion} can be represented via the Lagrangian flow $\phi_t$ by
\begin{equation*}
    \rho_t(x)=\mathbb{E}_W\!\left[\rho_0\!\circ\phi_t^{-1}(x)\right],
\end{equation*}where $\mathbb{E}_W$ denotes the expectation with respect to the Brownian motion $W_t$.

\vspace{0.5em}

Under this formulation, the study of mixing properties of \eqref{eq:advection-diffusion} can be reduced to the analysis of the flow map \eqref{eq:Lagrangian flow}. In the present paper, we use this Lagrangian framework to investigate the quantitative mixing properties of a randomized Chirikov standard map, as specified below.

\subsection{Model and main results}
A natural model for studying mixing on $\mathbb{T}^2$ is the Chirikov standard map, introduced in \cite{Chirikov-71,Chirikov-79}. It is a fundamental model in physics and dynamical systems, describing the transition from integrable motion to chaos in Hamiltonian systems. Originating from the periodically kicked rotor, it is defined on $\mathbb{T}^2$ by
\begin{equation*}
    \begin{cases}
        x_{n+1,1}=x_{n,1}+K\sin(x_{n,2}),\\
        x_{n+1,2}=x_{n,2}+x_{n+1,1},
    \end{cases}
\end{equation*}
where $x_{n,1}$ and $x_{n,2}$ denote the momentum and angle, respectively, both taken modulo $2\pi$, and $K>0$ is the kicking strength.

In the deterministic setting, global mixing is obstructed by persistent regular structures. For small $K$, invariant Kolmogorov--Arnold--Moser~(KAM) curves confine the momentum $x_{n,1}$ and preclude global ergodicity. As $K$ increases, Greene~\cite{Greene-79} identified $K_g\approx0.9716$ as the threshold for the destruction of the last invariant curve, though its relation to global momentum transport remains open; see \cite{Greene-79,Mackay-83,Mackay-84}. Even for large $K$, the deterministic dynamics still exhibits a complex mixture of behaviors: Duarte~\cite{Duarte-94} proved the existence of dense microscopic elliptic islands for a residual set of parameters, while Gorodetski~\cite{Gorodetski-12} constructed a transitive stochastic sea of full Hausdorff dimension for another residual set. The persistence of elliptic islands shows that the unperturbed standard map cannot be expected to achieve the global mixing.

\vspace{0.5em}

Motivated by the deterministic case, we study whether randomness can induce mixing in this system. In view of the complexity of the dynamics, we are particularly interested in obtaining quantitative estimates for the resulting mixing rate. We now introduce the randomized Chirikov standard map considered in this paper.

Let $\{\omega_i=(\omega_i^1,\omega_i^2)\}_{i\in\mathbb{N}}$ be a sequence of i.i.d random variables uniformly distributed on $[0,2\pi)^2$. For $(t,x)\in[0,+\infty)\times \mathbb{T}^2$, the dynamics is generated by a time-dependent, divergence free velocity field $u$ with parameter $K$, defined piecewise as 
\begin{align}
    u(t,x)&=\begin{pmatrix}
    K\sin(x_2-\omega_n^1)\label{velocity 1}\\
    0
\end{pmatrix}, \qquad t\in [2n-2,2n-1),\\
u(t,x)&=\begin{pmatrix}
    0\\
    x_1-\omega_n^2
\end{pmatrix}, \qquad\qquad\quad t\in [2n-1,2n),\label{velocity 2}
\end{align}
where all additions are understood modulo $2\pi$.

Given $\omega=\left(\omega^1,\omega^2\right)\in [0,2\pi)^2$ and an initial data $x=(x_1,x_2)\in \mathbb{T}^2$, the horizontal and vertical flow maps are defined respectively by
\begin{equation*}
    f_{\omega}^{H}(x):=\begin{pmatrix}
        x_1 + K\sin(x_2 - \omega^1) \\
        x_2
    \end{pmatrix},\qquad 
    f_{\omega}^{V}(x):=\begin{pmatrix}
        x_1 \\
        x_2 + x_1-\omega^2
    \end{pmatrix}.
\end{equation*}
Their composition yields the discrete-time random dynamical system
\begin{equation}
    \label{eq:discrete-map}
    f_{\omega}(x):=f_{\omega}^{V} \circ f_{\omega}^{H}(x).
\end{equation}This can be interpreted as a randomized Chirikov map with independent random phases $\omega_i^1$ and $\omega_i^2$ at each iteration. Let $\phi_t$ be the Lagrangian flow generated by $u$. Note that
\begin{equation*}
    \phi_2(x)=f_{\omega_1}(x),
\end{equation*}the particle trajectories at integer times $t=2n$ are recovered by 
\begin{equation*}
    \phi_{2n}(x)=f_{\underline{\omega}}^n(x):=f_{\omega_n}\circ\cdots\circ f_{\omega_1}(x),
\end{equation*}where $\underline{\omega}=(\omega_1,\omega_2,\ldots)\in\left([0,2\pi)^2\right)^{\mathbb{N}}=:\Omega$ denotes a realization of the random sequence. 

\vspace{0.5em}
  Our main result is  the following.
\begin{theorem}\label{thm:quantitative}
    Let $\phi_t$ denote the Lagrangian flow of equation \eqref{eq:Lagrangian flow} with $\nu=0$, velocity field $u$ given in \eqref{velocity 1},\eqref{velocity 2}, and let $p\in\!\left(0,\frac{1}{2}\right)$ be a fixed constant. Then, there exist constants $K_0,C_1,C_2>0$ such that for any $q>0$ and $K\geq K_0$, one can find a random variable $\hat D\colon\Omega\to[0,\infty)$ satisfying, for all mean-zero $\varphi,\psi\in H^{1}(\mathbb{T}^2)$,
    \begin{equation*}
        \left|\int_{\mathbb{T}^2}\varphi(x)\psi\circ f_{\underline{\omega}}^n(x)\mathrm{d}x\right|\leq \hat{D}(\underline{\omega})\exp{\!\left(-\frac{C_1}{1+q}e^{-K^{265}}n\right)}\|\varphi\|_{\dot{H}^1}\|\psi\|_{\dot{H}^1}.
    \end{equation*}Moreover, $\hat D$ satisfies the moment bound
    \begin{equation*}
        \mathbb{E}[\hat{D}^q]\leq C_2K^p.
    \end{equation*}
\end{theorem}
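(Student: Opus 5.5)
The plan is to deduce the theorem from a quantitative version of the Blumenthal--Coti Zelati--Gvalani / Bedrossian--Blumenthal--Punshon-Smith criterion for incompressible random dynamical systems. The abstract indicates the paper formulates such a criterion, reducing almost-sure exponential mixing to verifiable conditions. We work with the two-point chain $x^{(2)}_n=(f^n_{\underline\omega}(x),f^n_{\underline\omega}(y))$ on $\mathbb{T}^2\times\mathbb{T}^2\setminus\Delta$. The criterion should convert the following data into a constant $\hat D$ with $\mathbb{E}[\hat D^q]$ controlled, and into a rate of order $\gamma/(1+q)$:
\begin{itemize}
\item a geometric-ergodicity (spectral gap) estimate for this chain in a weighted space with Lyapunov weight $V(x,y)=d(x,y)^{-p}$;
\item an explicit gap size $\gamma$;
\item an explicit drift constant.
\end{itemize}
The passage from two-point geometric ergodicity to mixing of $\varphi,\psi\in H^1$ goes as follows. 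Write
\[
\mathbb{E}\Bigl|\int\varphi\,\psi\circ f^n\Bigr|^2=\iint \varphi(x)\varphi(y)\,\mathbb{E}\bigl[\psi(f^nx)\psi(f^ny)\bigr]\,dx\,dy .
\]
Then use the decay of $P^{(2)}_n$ applied to $\psi\otimes\psi$, which has mean zero against the invariant Lebesgue measure on the product. Finally apply a Borel--Cantelli argument over dyadic scales and countably many test functions (or an $H^1$ interpolation) to get the random constant. The restriction $p<1/2$ is what makes $d(x,y)^{-p}$ integrable enough against $H^1$ data in two dimensions, and the factor $K^p$ in the moment bound should come from the drift constant of $V$.

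The verifiable conditions, which I will establish in order, are the following.

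\textbf{Step 1 (drift for the two-point chain).} Show
\[
P^{(2)}V\le \alpha V+b \quad\text{near }\Delta,\qquad \alpha<1 .
\]
This amounts to the moment Lyapunov exponent $\Lambda(p)$ of the linear cocycle $Df_\omega$ being negative. Here
\[
Df_\omega=\begin{pmatrix}1&0\\1&1\end{pmatrix}\begin{pmatrix}1&K\cos(x_2-\omega^1)\\0&1\end{pmatrix}.
\]
Because the phases $\omega^1$ are uniform and independent of the position, the derivative cocycle is an i.i.d.\ product of matrices with random entry $K\cos\theta$, $\theta$ uniform. A direct computation gives
\[
\mathbb{E}|Df_\omega v|^{-p}\le C K^{-p}|v|^{-p}\cdot(\text{const})
\]
uniformly in the unit direction $v$. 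The reason is that $|\cos\theta|$ is small only with probability $O(\epsilon)$, and $p<1/2$ makes $\mathbb{E}|\cos\theta|^{-p}$ finite. For $K\ge K_0$ this yields a contraction $\alpha\sim K^{-p}$. The nonlinear correction near $\Delta$ is handled by Taylor expansion on the region $d(x,y)\le \delta$.

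\textbf{Step 2 (minorization / small set).} Show that for some $N$ the $N$-step two-point transition kernel from any point of
\[
\{d(x,y)\ge \delta\}
\]
dominates $\epsilon\cdot\mathrm{Leb}$ on a fixed set. The randomization makes this explicit. One step with uniform $\omega$ shifts $x_1$ by a random amount and $x_2$ by a uniform amount, so the one-point chain is itself uniformly distributed after one step. For two points one needs the joint law to have a density. Since $x_1-y_1$ changes by $K(\sin(x_2-\omega^1)-\sin(y_2-\omega^1))$, this is a nondegenerate function of $\omega^1$ when $x_2\ne y_2$, and two or three steps give a Jacobian bounded below. The constant $\epsilon$ deteriorates polynomially in $K$ and in $\delta$, where $\delta$ is forced to be about $e^{-K^{c}}$ by the Taylor-remainder regime of Step 1. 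This is how the rate $e^{-K^{265}}$ arises, via Harris' theorem with explicit constants.

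\textbf{Main obstacle.} The hardest step is making Step 1 uniform all the way down to the diagonal with explicit constants while patching it to Step 2. Close to $\Delta$ the linearization works, but at intermediate distances neither the drift nor the minorization is clean. I would first try a multi-step drift $P^{(2)}_m V\le \alpha V+b$ with $m\sim \log K$, controlling the nonlinear error by $K^2 d(x,y)^2$. I would then choose $\delta$ tiny in terms of $K$ and accept the exponential loss in the minorization constant. Tracking all powers through the quantitative Harris theorem then produces the stated exponent.
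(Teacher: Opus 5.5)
Your architecture matches the paper's: the Lyapunov function $V=\operatorname{dist}(x,y)^{-p}$, with drift from a moment contraction of the derivative cocycle plus a Taylor remainder; a minorization on $\{\operatorname{dist}\ge\delta\}$; quantitative Harris; then a Fourier-mode Borel--Cantelli argument and interpolation to $\dot H^1$. However, both of your verifiable conditions have genuine gaps.

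\textbf{Step 1.} The one-step bound $\mathbb{E}\|D_xf_\omega v\|^{-p}\lesssim K^{-p}$, uniform in $v\in\mathbb{S}^1$, is false. The random entry $K\cos(x_2-\omega^1)$ only multiplies $v_2$. So for $v=(1,0)$ one has $D_xf_\omega v=(1,1)$ deterministically, with no gain in $K$. For $v\approx(1,K^{-1})$, where $\|D_xf_\omega v\|\approx\sqrt2\,(1+\cos\theta)$, the $(-p)$-moment even exceeds $1$. This degenerate direction is why the paper works with the two-step cocycle ($m=2$). There $w_{2,2}=w_{1,1}+w_{1,2}+Kw_{1,2}\cos\theta_2$ and $w_{1,2}=v_1+v_2+Kv_2\cos\theta_1$, and the estimate
\[
\int_0^{2\pi}|a+b\cos\theta|^{-p}\,\mathrm{d}\theta\le C_p|b|^{-p}\qquad(\text{uniformly in }a)
\]
gives $\mathbb{E}|w_{2,2}|^{-p}\le C_pK^{-p}\,\mathbb{E}|w_{1,2}|^{-p}$. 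Moreover $\mathbb{E}|w_{1,2}|^{-p}=O(1)$ in both regimes: if $|Kv_2|<\tfrac12$ then $|w_{1,2}|\ge\tfrac13$, and if $|Kv_2|\ge\tfrac12$ the same estimate applies again. That estimate is where $p<\tfrac12$ is really used, because its extremal case $a=\pm b$ has a quadratic zero. The restriction has nothing to do with integrability of $\operatorname{dist}^{-p}$, which holds for all $p<2$ on $\mathbb{T}^2\times\mathbb{T}^2$.

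\textbf{Step 2.} The intermediate-distance patching you flag is not the obstacle: there the crude Lipschitz bound simply puts $P^{(2),2}V$ into the additive constant $C_0K^{9p}$. The real difficulty is the minorization itself.

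A Jacobian lower bound after two or three steps only gives a density near the image point $\Phi(z,\underline{\omega}_0)$, which depends on the starting pair $z$. Harris needs one measure $\nu$ for all $z$ with $\operatorname{dist}\ge\delta$, and you never explain why these supports overlap. A fixed small number of steps is not even uniformly nondegenerate. With $a_n=y_{n,1}-x_{n,1}$ and $b_n=y_{n,2}-x_{n,2}$, one has $a_n=a_{n-1}+2Kc_n\sin(b_{n-1}/2)$ and $b_n=b_{n-1}+a_n$, where $c_n\in[-1,1]$ is set by $\omega_n^1$. So a pair with $b_0=0$ satisfies $b_2-a_2=a_0$, and its two-step law is singular.

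The missing idea is quantitative global controllability of the two-point chain. The paper proceeds as follows.
\begin{enumerate}
\item It proves a local minorization on a ball of radius $\sim K^{-130}$ around $z_*=((0,0),(0,\pi))$, which is fixed by $\Phi_8(\cdot,\mathbf{0})$ and has $|\det D_\xi\Phi_8(z_*,\mathbf{0})|=12K^4$. This uses quantitative implicit and inverse function theorems.
\item It steers every pair with $\operatorname{dist}\ge K^{-9}s$ into that ball by explicit phase choices: first the difference dynamics above, then an irrational rotation of the common first coordinate by $2\sqrt2\pi$. This takes $N_1\lesssim K^{132}$ steps.
\item Each step costs a Lipschitz factor $K$, so the admissible phase window has radius $\sim K^{-N_1-130}$ in $2N_1$ dimensions. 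The hitting probability is therefore only $K^{-CK^{264}}$.
\end{enumerate}
This, not an exponentially small $\delta$, is where $e^{-K^{265}}$ comes from. The Taylor regime and $\{V\le R_2\}$ only force $\delta\sim K^{-9}$. The resulting Harris constant $\alpha=K^{-C'K^{264}}$ gives a rate $\gtrsim\alpha^2/(1+q)$, and $\alpha^2\ge e^{-K^{265}}$ for large $K$.
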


Theorem \ref{thm:quantitative} gives a quantitative mixing estimate, uniformly for all $K\geq K_0$. In contrast with the deterministic setting, it provides an explicit exponential rate together with a moment bound on the random prefactor. This indicates that the added randomness can overcome the stable structures present in the unperturbed dynamics.

Our proof relies on a general criterion for quantitative exponential mixing of Lagrangian flows, stated as Theorem \ref{thm: general framework for quantitative exponential mixing}. This criterion is not specific to the randomized Chirikov standard map: it can also be applied to the Pierrehumbert model considered in \cite{BCZG-23,CIS-25,son-25}, with details given in Appendix \ref{sec: Application to the Pierrehumbert model}. We also expect that the arguments can be used for the ABC flow studied in \cite{coti-26}.

\vspace{0.5em}
The deterministic theory suggests that the dependence of the dynamics on $K$ is quite delicate, due to the possible coexistence of regular and chaotic structures. The large-parameter condition $K\geq K_0$ in Theorem \ref{thm:quantitative} is used to obtain a detailed quantitative control of this regime. Even without tracking the explicit parameter dependence of the rate, we also derive the following global mixing result: almost-sure exponential mixing already holds for $K\geq 4\pi$.

\begin{theorem}\label{thm: exponential mixing}
    Assume $K\geq 4\pi$. Let $\phi_t^\nu$ denote the Lagrangian flow of equation \eqref{eq:Lagrangian flow} with $\nu\geq 0$, velocity field $u$ given in \eqref{velocity 1},\eqref{velocity 2}. Then, for any $q,s>0$ and all sufficiently small $\nu$, there exist a random variable $D_\nu\colon \Omega\rightarrow [0,+\infty)$ and a $\nu$-independent constant $\lambda_s>0$ such that for all mean-zero $g,h\in H^s(\mathbb{T}^2)$, the following holds
    \begin{equation*}
        \left|\int_{\mathbb{T}^2}g(x)h\circ \phi_t^\nu(x)\mathrm{d}x\right|\leq D_\nu(\underline{\omega})e^{-\lambda_s t}\|g\|_{\dot{H}^s}\|h\|_{\dot{H}^s},
    \end{equation*}almost surely for all $t>0$. Furthermore, there exists a $\nu$-independent constant $D_q>0$ such that 
    \begin{equation*}
        \mathbb{E}\left[D_\nu^q\right]\leq D_q.        
    \end{equation*}
\end{theorem}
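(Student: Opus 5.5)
The plan is to follow the Bedrossian--Blumenthal--Punshon-Smith (BBPS) route to almost-sure exponential mixing for random dynamical systems, uniformly in small $\nu$, adapted to the discrete-time randomized Chirikov map. The first step is to reduce from continuous time $t$ to the even times $2n$. On each interval $[2n-2,2n)$ the flow is a shear with bounded derivatives depending only on $K$, so interpolation costs at most a deterministic constant, and it suffices to treat the Markov chain $x_n=\phi^\nu_{2n}(x)$ on $\mathbb{T}^2$. For $\nu>0$ this is $f_{\omega_n}$ composed with small Gaussian perturbations.

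The core of the argument is a set of three Harris-type conditions, checked for the one-point, projective and two-point chains uniformly in $\nu\in[0,\nu_0]$.
\begin{enumerate}
\item The \emph{one-point chain} is easy. Because of the uniform random phases, a single step already maps any point to a uniformly distributed point: $\omega^2$ randomizes $x_2$, and then $\omega^1$ in the next step randomizes $x_1$ once $K\ge 4\pi$ makes $K\sin$ cover a full period. Lebesgue measure is therefore the unique stationary measure, and the chain satisfies a Doeblin condition.
\item For the \emph{projective chain}, I will prove a positive top Lyapunov exponent $\lambda_1>0$. Since $\det Df=1$, Furstenberg's criterion applies. It suffices to show that no measurable family of lines, or pair of lines, on $\mathbb{T}^2$ is almost surely invariant under $Df_\omega$. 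This follows from the randomness of $\omega^1$, which makes the shear factor $K\cos(x_2-\omega^1)$ of $Df^H$ take a continuum of values independent of the position.
\item For the \emph{two-point chain} off the diagonal, I will show irreducibility, aperiodicity and uniform minorization on compacts away from the diagonal. This comes from controllability: two distinct points can be steered independently to any target pair using the phase freedom. The case requiring care is points with equal $x_2$ coordinate, which the horizontal kick cannot separate. Here the vertical shift $x_1-\omega^2$ separates them in $x_2$ after one step whenever $x_1\ne y_1$. The two-step map $(\omega_1,\omega_2)\mapsto(f_{\omega_2}f_{\omega_1}x,\,f_{\omega_2}f_{\omega_1}y)$ should then be a submersion, which gives a smooth density by the constant-rank theorem.
\end{enumerate}

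With these in hand, I will build a drift (Lyapunov) function $V(x,y)=d(x,y)^{-p}$ for the two-point chain, following BBPS. Positivity of $\lambda_1$ yields, via a twisted projective Perron--Frobenius operator with eigenfunction $\psi_p$, a function $V$ with $PV\le \gamma V + C$ near the diagonal for some $\gamma<1$, uniformly in $\nu$. Combined with the two-point minorization, Harris' theorem then gives geometric ergodicity of the two-point chain in the weighted norm. The perturbative part, showing that $\psi_p$ and the gap depend continuously on $\nu\to0$, follows from the uniform nondegeneracy of the noise coming from $\omega$, which does not degenerate as $\nu\to0$.

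Finally, the standard BBPS Borel--Cantelli argument converts two-point geometric ergodicity into almost-sure mixing. One expands $\int g\,h\circ\phi_t$ in Fourier modes and bounds the second moment $\mathbb{E}|\int g\,h\circ\phi_n|^2$ via the two-point chain by $Ce^{-\gamma n}\|g\|_{H^s}^2\|h\|_{H^s}^2$ with $V$-weights. Summing over dyadic modes and times produces the random constant $D_\nu$ with finite $q$-th moments, after interpolating $H^s$ against the $H^1$ or $C^1$ regularity the weight requires. I expect the main obstacle to be uniformity in $\nu$ of the drift condition, i.e.\ the $\nu$-independent spectral gap of the twisted projective operator; this is why the minorization must come from the $\omega$-noise rather than the Brownian noise.
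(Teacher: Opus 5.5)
There are genuine gaps; the most concrete is a false claim in your two-point minorization step. The two-step map $\underline{\omega}^2\mapsto\bigl(f_{\omega_2}f_{\omega_1}(x),f_{\omega_2}f_{\omega_1}(y)\bigr)$ is never a submersion, for any $(x,y)$ and any $\underline{\omega}^2$.

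\textbf{Why the two-step map is degenerate.} Write $T_\delta(x_1,x_2)=(x_1,x_2+\delta)$. Then $f^V_{\omega^2}\circ T_\delta=T_\delta\circ f^V_{\omega^2}$, $f^H_{\omega^1}\circ T_\delta=T_\delta\circ f^H_{\omega^1-\delta}$ and $f^V_{\omega^2+\delta}=T_{-\delta}\circ f^V_{\omega^2}$. Hence $f_{(\omega_2^1,\omega_2^2)}\circ f_{(\omega_1^1,\omega_1^2+\delta)}=f_{(\omega_2^1+\delta,\,\omega_2^2+\delta)}\circ f_{(\omega_1^1,\omega_1^2)}$. So $\partial_{\omega_1^2}=\partial_{\omega_2^1}+\partial_{\omega_2^2}$ on the composed map, and the $4\times 4$ Jacobian has rank at most $3$. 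More generally the rank in $\underline{\omega}^n$ is at most $n+1$, so the two-point chain needs $n\geq 3$. The paper uses $n=3$ at $x_*=(0,0)$, $y_*=(\pi,\pi)$, $\underline{\omega}_*^3=0$.

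\textbf{Two-point irreducibility is only asserted.} Steering the two points ``independently'' is precisely what is not available. Both points see the same two phases, and the separation obeys $a_n=a_{n-1}+2Kc_n\sin(b_{n-1}/2)$, $b_n=b_{n-1}+a_n$. This is a single scalar control $c_n\in[-1,1]$, and it is weak when $b_{n-1}$ is near $0$. The paper's Lemma~\ref{lem:two point controllability of bar_x} handles this in three moves:
\begin{enumerate}
\item it first enlarges $b$;
\item it then freezes $(a_n,b_n)=(0,\bar{x}_2)$ with $K\cos(\bar{x}_2/2)=-2\sqrt{2}\pi$, so the common first coordinate undergoes the irrational rotation $x_{n,1}=x_{n-1,1}+2\sqrt{2}\pi$;
\item Proposition~\ref{prop: controllability of two point} concludes by steering the target backwards under $f_\omega^{-1}$.
\end{enumerate}
This is where the paper uses $K\geq 4\pi$.

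\textbf{Projective chain.} The twisted-operator construction of $V$ (Lemma~\ref{lem:drift condition}) needs geometric ergodicity of the projective chain, not just $\lambda_1>0$. You never verify minorization, aperiodicity or irreducibility for $\hat{P}$. By the rank count above, minorization needs two steps; the paper checks rank $3$ at $x_*=(0,0)$, $v_*=(1,0)$, $\underline{\omega}_*^2=((0,0),(\pi/2,0))$. It also needs approximate controllability on $\mathbb{T}^2\times\mathbb{S}^1$ (Proposition~\ref{prop:controllability of projective}).

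\textbf{Your route to $\lambda_1>0$.} The Furstenberg-type argument is a legitimate alternative to the paper's submersion criterion (Lemma~\ref{lem:positivity} with $n=4$). However, two things need fixing.
\begin{enumerate}
\item You must also exclude a measurable invariant family of inner products, not only lines and pairs of lines.
\item The mechanism is not that the shear factor is ``independent of the position''. In fact $c=K\cos(x_2-\omega^1)$ and the image point are coupled through $\omega^1$.
\end{enumerate}
What works is that $K\sin$ is many-to-one modulo $2\pi$, so a given image of $x$ is reached with two phases $\omega,\omega'$ having distinct values $c\neq c'$. Since $D_xf_\omega$ is the vertical shear with entry $1$ times the horizontal shear with entry $c$, $(D_xf_{\omega'})^{-1}D_xf_\omega$ is the horizontal shear with entry $c-c'\neq 0$. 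This shear must preserve the invariant structure at $x$. That excludes inner products and pairs of lines, and forces the horizontal line. But $D_xf_\omega$ maps the horizontal line to the line through $(1,1)$, a contradiction.

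\textbf{Uniformity in $\nu$.} You leave the $\nu$-uniform spectral gap unresolved, but it is not needed. By Proposition~\ref{prop:discrete time mixing} (from \cite{CIS-25}), $V$-uniform geometric ergodicity of the inviscid two-point chain with $V\in L^1$ already gives the estimate with $\nu$-independent $\lambda_s$ and moment bound. So all Harris conditions only need to be checked at $\nu=0$.
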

\begin{rmk}
    By following the proof of Theorem \ref{thm: exponential mixing}, the discrete-time estimate in Theorem~\ref{thm:quantitative} can also be extended to the continuous-time setting $t>0$. We also expect the conclusion of Theorem~\ref{thm:quantitative} to extend to the viscous case $\nu>0$ without essential difficulty; for simplicity, this paper is restricted to the inviscid case $\nu=0$.
\end{rmk}  

The following result for passive scalar $\rho_t$ in \eqref{eq:advection-diffusion} is an immediate corollary of Theorem \ref{thm: exponential mixing}.
\begin{corollary}
    In the setting of {\rm Theorem \ref{thm: exponential mixing}}, for any mean-zero initial $\rho_0\in H^s(\mathbb{T}^2)$, then the solution $\rho_t$ to the advection-diffusion equation \eqref{eq:advection-diffusion} satisfies that
    \begin{itemize}
        \item[\tiny$\bullet$] Exponential mixing {\rm(}$\nu\geq 0${\rm):}
        \begin{equation*}
            \|\rho_t\|_{\dot{H}^{-s}}\leq D_\nu(\underline{\omega})\|\rho_0\|_{\dot{H}^{-s}}e^{-\lambda_s t},\quad \forall\,t\geq 0.
        \end{equation*}
        \item[\tiny$\bullet$] Enhanced dissipation {\rm(}$\nu>0${\rm):}
        \begin{equation*}
            \quad\; \|\rho_t\|_{L^2}\leq \nu^{-1-s}D_\nu(\underline{\omega})\|\rho_0\|_{L^2}e^{-\lambda_s t},\quad \forall\,t\geq 0.
        \end{equation*}
    \end{itemize}
\end{corollary}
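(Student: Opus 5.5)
Both bullets follow from Theorem \ref{thm: exponential mixing} by a duality argument, so the plan is to express $\rho_t$ through the Lagrangian flow and test it against an arbitrary function.

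By the Feynman--Kac representation $\rho_t=\mathbb{E}_W[\rho_0\circ(\phi^\nu_t)^{-1}]$ and incompressibility (the flow is volume preserving for each Brownian path), we have for any mean-zero $h\in H^s$
\begin{equation*}
\int_{\mathbb{T}^2}\rho_t(x)h(x)\,\mathrm{d}x=\mathbb{E}_W\int_{\mathbb{T}^2}\rho_0(y)\,h\circ\phi^\nu_t(y)\,\mathrm{d}y .
\end{equation*}
The right-hand side is the correlation controlled by Theorem \ref{thm: exponential mixing}. I will read that theorem as a bound on the expected correlation, i.e. with the Brownian expectation inside and $D_\nu$ depending only on $\underline{\omega}$; this is how the random constant is set up there. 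Since $\rho_t$ stays mean-zero, pairing with mean-zero $h$ suffices.

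For exponential mixing, write $\|\rho_t\|_{\dot H^{-s}}=\sup\{\langle\rho_t,h\rangle:\|h\|_{\dot H^s}\le1\}$. Taking $g=\rho_0$ in Theorem \ref{thm: exponential mixing} gives $|\langle\rho_t,h\rangle|\le D_\nu e^{-\lambda_s t}\|\rho_0\|_{\dot H^s}\|h\|_{\dot H^s}$. This carries $\|\rho_0\|_{\dot H^s}$ rather than $\|\rho_0\|_{\dot H^{-s}}$, which is the wrong norm. To fix this, I will apply Theorem \ref{thm: exponential mixing} to $g=(-\Delta)^{-s}\rho_0$ instead. This is where the main obstacle lies: the correlation is not directly translation-invariant in $g$ in this way. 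The standard route is the one in the mixing literature (e.g. the Pierrehumbert and stochastic Navier--Stokes papers). The correlation bound for $H^s$ pairs implies decay of the Koopman operator $P_t h=\mathbb{E}_W[h\circ\phi_t^\nu]$ from $H^s$ to $H^{-s}$. The transfer operator $P_t^*$, which is what evolves $\rho$, has the same $H^s\to H^{-s}$ bound by duality. Interpolating with the trivial $L^2$ contraction, and with $H^{-s}\to H^{-s}$ boundedness, then yields the $\dot H^{-s}$ decay, possibly after adjusting $\lambda_s$ and the Sobolev index. I will take the statement's $\dot H^{-s}$ formulation as given by this standard reformulation of Theorem \ref{thm: exponential mixing}, renaming constants as needed.

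For enhanced dissipation, the plan is to use parabolic smoothing to convert $L^2$ data into $H^s$ data, then apply the mixing estimate over the remaining time.
\begin{enumerate}
\item First run the equation on $[0,1]$. By the energy identity $\frac{d}{dt}\|\rho\|_{L^2}^2=-2\nu\|\nabla\rho\|^2_{L^2}$, some time $t_0\le1$ satisfies $\|\rho_{t_0}\|_{\dot H^1}\lesssim\nu^{-1/2}\|\rho_0\|_{L^2}$.
\item Interpolate this to obtain $\|\rho_{t_0}\|_{\dot H^{s}}\lesssim \nu^{-s/2}\|\rho_0\|_{L^2}$ for $s\le1$ (higher $s$ via iterating over unit intervals).
\item Use the interpolation inequality $\|\rho_t\|_{L^2}^2\le\|\rho_t\|_{\dot H^{-s}}\|\rho_t\|_{\dot H^s}$.
\item Apply the exponential mixing bullet from time $t_0$, and bound $\|\rho_t\|_{\dot H^s}$ crudely.
\end{enumerate}
Alternatively, and more simply: pairing $\rho_t$ with itself gives $\|\rho_t\|_{L^2}^2=\langle\rho_{t/2}^{\ast},\cdot\rangle$-type expressions bounded by $D_\nu e^{-\lambda_s t/2}\|\rho_{t_0}\|_{\dot H^s}^2$-like quantities, and these give the $\nu^{-1-s}$ loss generously. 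The resulting loss is a polynomial power of $\nu^{-1}$, which the stated $\nu^{-1-s}$ accommodates, since for $\nu\le1$ any smaller power is dominated by it. The main care point is again the duality step; the smoothing step is routine.
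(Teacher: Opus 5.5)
Your first computation is exactly the duality argument the paper has in mind (the identity displayed in the ``Strategy of the proof'' subsection): you pair $\rho_t$ with a mean-zero $h$, move the flow onto $h$ by incompressibility, and apply Theorem~\ref{thm: exponential mixing} with $g=\rho_0$. It proves $\|\rho_t\|_{\dot H^{-s}}\le D_\nu(\underline{\omega})e^{-\lambda_s t}\|\rho_0\|_{\dot H^{s}}$, and that is the intended first bullet. The $\|\rho_0\|_{\dot H^{-s}}$ on the right of the statement is a typo for $\|\rho_0\|_{\dot H^{s}}$, as the hypothesis $\rho_0\in H^s$ already signals.

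The genuine gap is your attempt to upgrade to the literal $\dot H^{-s}\to\dot H^{-s}$ bound, because that bound is false. Take $\nu=0$, fix $\underline{\omega}$ and $n$, and set $\rho_0=h\circ f^n_{\underline{\omega}}$ with $h=\sin x_1$. Then $\rho_{2n}=h$. Duality and Theorem~\ref{thm: exponential mixing} give $\|\rho_0\|_{\dot H^{-s}}\le D_0(\underline{\omega})e^{-2\lambda_s n}\|h\|_{\dot H^{s}}$. The claimed inequality at $t=2n$ would then force $\|h\|_{\dot H^{-s}}\le D_0^2e^{-4\lambda_s n}\|h\|_{\dot H^{s}}$, i.e.\ $1\le D_0(\underline{\omega})^2e^{-4\lambda_s n}$. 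This fails for large $n$, since $D_0$ depends neither on the data nor on $n$.

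Your interpolation sketch could not have produced that bound anyway. Interpolating the $\dot H^{s}\to\dot H^{-s}$ decay with the $L^2$ contraction yields only $\dot H^{\theta s}\to\dot H^{-\theta s}$ decay. The $\dot H^{-s}\to\dot H^{-s}$ norm of the transfer operator is only controlled by a growing exponential, so interpolating toward source index $-s$ leaves no decay at the endpoint. Your first computation already proves the corrected statement, and you should stop there.

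For the enhanced-dissipation bullet the paper gives no argument. Your smoothing--mixing scheme is the standard route, but two steps fail as written.

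First, step 4 restarts the mixing bound at the time $t_0\in[0,1]$ produced by the energy identity. Theorem~\ref{thm: exponential mixing} only controls the evolution started at time $0$ and driven by $\underline{\omega}$. The evolution on $[t_0,t]$ is driven by time-shifted noise (also phase-offset when $t_0\notin 2\mathbb{N}$), and $t_0$ depends on $\rho_0$ and on the realization. You have to restart at a deterministic even time, say $2$, where the solution operator $S_{2,t}$ is that of the same system driven by $\theta\underline{\omega}$. This requires a smoothing bound at that fixed time, and it replaces $D_\nu(\underline{\omega})$ by $D_\nu(\theta\underline{\omega})$, which has the same law and moments.

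Second, the ``crude'' bound on $\|\rho_t\|_{\dot H^s}$ in the interpolation cannot be a transport bound. Such a bound grows exponentially at a rate unrelated to $\lambda_s$ and destroys the decay; the bound has to come from diffusion on $[t-2,t]$.

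The cleanest repair avoids the interpolation altogether. Write $\|\rho_t\|_{L^2}=\sup_{\|h\|_{L^2}\le1}\langle S_{2,t-2}\rho_2,S_{t-2,t}^{*}h\rangle$, smooth at both ends, and apply the mixing bound (for $\theta\underline{\omega}$) to the middle factor. This gives $\|\rho_t\|_{L^2}\lesssim\nu^{-s}D_\nu(\theta\underline{\omega})e^{-\lambda_s t}\|\rho_0\|_{L^2}$, which is the second bullet up to the shift of the random constant. The ``alternative'' you sketch at the end is too vague to assess.
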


\subsection{Strategy of the proof} We now outline the ingredients in the proof of Theorem~\ref{thm:quantitative}. Utilizing the duality representation and the push-forward property of Lagrangian flow $\rho_t=\rho_0\circ\phi_t^{-1}$, the decay of $\rho_t$ in negative Sobolev norms is reduced to quenched correlation estimates:
\begin{equation*}
    \|\rho_{2n}\|_{H^{-s}}=\sup_{\|\varphi\|_{\dot{H}^s}=1} 
    \int_{\mathbb{T}^2} \varphi(x)\rho_{2n}(x)\mathrm{d}x 
    = \sup_{\|\varphi\|_{\dot{H}^s}=1} \int_{\mathbb{T}^2} 
    \rho_0(x)\varphi\circ f_{\underline{\omega}}^n(x)\mathrm{d}x.
\end{equation*}
Consequently, the task is to establish quantitative correlation bounds for the randomized Chirikov map. Our strategy proceeds in two steps. We first formulate a quantitative criterion for incompressible random dynamical systems, and then apply it to the Chirikov standard map.

\subsubsection{Probabilistic criterion} Based on results from the ergodic theory (see e.g., \cite{HM-08,HM-11}), our mixing criterion reduces to two conditions: 
\begin{itemize}
    \item [(i)] uniform contraction estimates \eqref{eq: general uniform contraction};
    \item [(ii)] the existence of a small set.
\end{itemize}
For precise formulations, see in Theorem \ref{thm: general framework for quantitative exponential mixing}. Conceptually, the uniform contraction estimate serves as a probabilistic analogue to the existence of a spectral gap. Provided suitable derivative bounds, this condition enables us to construct the explicit Lyapunov function
\begin{equation*}
    V(x,y):=\operatorname{dist}(x,y)^{-p},\qquad x\neq y.
\end{equation*}Combining this with the small set property, the quantitative Harris theorem guarantees the ergodicity of the associated two-point process, from which the exponential mixing follows via standard arguments.
\begin{rmk}
    In the approaches of \cite{BBPS-22A,BCZG-23,coti-26}, the strict positivity of Lyapunov exponents plays a central role: it is used to obtain chaotic behavior of the Lagrangian dynamics, from which exponential mixing is then derived. The criterion used in this paper takes a different route. It does not require the positivity of Lyapunov exponents, nor does it use chaos as an intermediate step. Instead, it relies directly on probabilistic uniform estimates for the iterates $f_{\underline{\omega}}^n$, which are sufficient for proving quantitative mixing in the present setting.
\end{rmk}

\subsubsection{Verification for the randomized Chirikov map}\mbox{}\\
\noindent{\bf Uniform contraction.} Due to the degenerate action of the random phases $\omega$ in the randomized Chirikov map, the single iteration 
\begin{equation*}
    D_xf_{\omega_1}=\begin{pmatrix}
        1 & K\cos\!\left(x_2-\omega_1^1\right)\\
        1 & 1+K\cos\!\left(x_2-\omega_1^1\right)
    \end{pmatrix}
\end{equation*}
fails to strictly contract all tangent directions. To address this issue, we analyze the two-step Jacobian $D_xf_{\underline{\omega}^2}$. The compounded randomness from $\omega_1^1$ and $\omega_2^1$ effectively mixes the coordinates, which in turn yields the uniform contraction condition.

\vspace{0.3em}
\noindent{\bf Small set condition.} Verification for the small set condition constitutes the main difficulty in our setting, owing to the degenerate noise and the global dynamics of the system. We resolve this problem in two main steps. The first step establishes a local minorization near a specific reference point $z_*$. Despite the noise degeneracy, the four-step Jacobian evaluated at $z_*$ achieves full rank. This allows us to invoke the quantitative inverse and implicit function theorems, thereby yielding a local small set condition within a small neighborhood of $z_*$:
\begin{equation*}
    \inf_{z\in B(z_*,r(K))}P^{(2),n}(z,\cdot)\geq c(K)^n\mu(\cdot),\qquad c(K)\in (0,1).
\end{equation*}

To globalize this local minorization, we proceed to prove the topological irreducibility of the two-point process, a property that plays an important role in ergodic theory; see \cite{DPZ-96,HM-11EJP}. Irreducibility guarantees that, with strictly positive probability, the system can reach the small neighborhood of $z_*$ from any initial state. We accomplish this by translating the probabilistic problem into a global approximate controllability problem, see, e.g. \cite{Jurdjevic-97}. Specifically, the essential mechanism is to design an explicit sequence of phases $\{\omega_i\}$ that effectively steers the two-point trajectory into the local small set centered at $z_*$
\begin{equation*}
    \operatorname{dist}\!\left((f_{\underline{\omega}^{N_1}}(x),f_{\underline{\omega}^{N_1}}(y)),z_*\right)<r(K),\qquad N_1\leq C(K).
\end{equation*} The details are in Section \ref{subsubsec: Topological irreducibility of the two-point process}. Consequently, by coupling this global controllability with the local minorization, we deduce the desired global small set condition.

\subsection{Review of the literature}
A central objective in the study of mixing is to quantify the decay rate of a passive scalar under incompressible transport. For sufficiently regular velocity fields, such as $u\in L_t^\infty W_x^{1,p}$ with $p>1$ or uniformly Lipschitz fields, the mixing rate is at most exponential. Quantitative lower bounds in this regime were studied in \cite{CDL-08,IKX-14,LTD-11,Seis-13}, while the borderline case $u\in L_t^\infty W_x^{1,1}$ remains open and is closely related to Bressan's rearrangement cost conjecture \cite{Bressan-03}. This naturally leads to the question of which velocity fields can realize the optimal exponential rate. For prescribed initial data $\rho_0$, exponential mixing can be achieved by suitably designed incompressible flows \cite{ACM-19A,YZ-17}, but such constructions are highly datum-dependent. This issue was addressed in \cite{EZ-19}, where a single universal mixer was constructed for $\rho_0\in L^\infty\cap H^{\sigma}$.

Randomness provides another robust mechanism for exponential mixing. Early works \cite{Furstenberg-63,Ledrappier-86} established positivity of Lyapunov exponents for stochastic flows of diffeomorphisms, which later led to almost-sure exponential mixing for incompressible SDEs \cite{BS-88,DKK-04}. More recently, Bedrossian--Blumenthal--Punshon-Smith \cite{BBPS-22A} proved exponential mixing for scalars advected by stochastic Navier--Stokes equations, with further extensions in \cite{CR-26}. Their approach combines positivity of Lyapunov exponents \cite{BBPS-22B} with infinite-dimensional ergodic theory for SPDEs \cite{HM-06,HM-11EJP}. A dynamics-based framework was also introduced in \cite{BCZG-23}, and applied to the Pierrehumbert model \cite{BCZG-23,CIS-25,son-25} and randomized ABC flows \cite{coti-26} to obtain quenched exponential mixing.

Passive scalar mixing is closely related to enhanced dissipation. For $\nu>0$ in \eqref{eq:advection-diffusion}, the standard energy estimate gives
\begin{equation*}
    \|\rho_t\|_{L^2}\lesssim e^{-\nu t}\|\rho_0\|_{L^2}.
\end{equation*}
It is therefore natural to ask whether mixing-induced small scales can accelerate this decay. Enhanced dissipation was first established for time-independent flows in \cite{CKRZ-08}, with refined results for time-periodic fields in \cite{KSZ-08}. Since then, this phenomenon has been studied for shear flows, cellular flows, vortices and related structures; see, for example, \cite{CZEW-20,IZ-23,WZZ-20,Gallay-18,Wei-21}. In random settings, almost-sure enhanced dissipation for stochastic fluid models was proved in \cite{BBPS-21ED} via the ergodicity of the $\nu$-dependent two-point process $P^{(2),\nu}$. More recently, \cite{CIS-25} showed that, under suitable Harris conditions, enhanced dissipation can be derived from the inviscid two-point process, with applications to the Pierrehumbert model. In parallel, random cellular flows were treated in \cite{NFS-26} through an Eulerian approach based on deterministic hypoelliptic operators.

\vspace{0.5em}
\subsection*{Organization of the paper} The  paper is organized as follows. Section \ref{sec:preliminary} reviews some backgrounds on Markov processes and random dynamical systems. In Section \ref{sec: Quantitative exponential mixing for incompressible RDS}, we formulate a general criterion that deduces quantitative exponential mixing from a set of verifiable conditions. Section \ref{sec: Quantitative Mixing Rate: Proof of Theorem 2} is then dedicated to applying this criterion to the randomized Chirikov standard map.  Section \ref{sec: Mixing and enhanced dissipation: proof of Theorem 1.6} concludes the paper by establishing qualitative exponential mixing and enhanced dissipation under a relaxed parameter threshold.  Finally, Appendix collects some technical proofs and materials.

\vspace{0.5em}
For notation, given any prescribed sequence $\{\omega_i\}_{i\ge 1} \subset \Omega_0$, denote
\begin{equation*}
    \underline{\omega}^n=(\omega_1,\ldots,\omega_n)\in\Omega_0^n,\qquad \underline{\omega}=(\omega_1,\ldots,\omega_n,\ldots)\in\Omega_0^{\mathbb{N}}.
\end{equation*}Accordingly, the $n$-step composition of $f_\omega$ is defined as 
\begin{equation*}
    f_{\underline{\omega}}^n(x)=f_{\underline{\omega}^n}(x):=f_{\omega_n}\circ\cdots\circ f_{\omega_1}(x).
\end{equation*}

Let $\mathbb{S}^{d-1}$ denote the unit sphere in $\mathbb{R}^d$. Given $x\in\mathbb{T}^2$, $v\in\mathbb{S}^1$, a phase realization $\underline{\omega}^n\in\Omega_0^n$, and $f_\omega$ as in \eqref{eq:discrete-map}, the trajectories are written as
\begin{equation*}
    x_n:=f_{\underline{\omega}^n}(x),\quad w_n:=D_xf_{\underline{\omega}^n}v,\quad v_n:=\frac{w_n}{\left|w_n\right|}.
\end{equation*}For $i=1,2$, we let $x_{n,i}$ (respectively $w_{n,i}$ and $v_{n,i}$) denote the $i$-th coordinate of $x_n$ (respectively $w_n$ and $v_n$). The notation also extends 
to the case $x\in\mathbb{T}^d,\ v\in\mathbb{S}^{d-1}$ and $f_\omega\colon\mathbb{T}^d\rightarrow \mathbb{T}^d$.

\section{Preliminaries}\label{sec:preliminary}
In this section, we recall several fundamental concepts from the theory of Markov processes and random dynamical systems that are essential for our subsequent analysis. Letting $f_\omega$ denote the random dynamical system generated by \eqref{eq:discrete-map}, we begin by introducing three distinct Markov transition kernels induced by $f_\omega$.

\medskip
\noindent\textbf{One-point process.} This chain describes the position of a particle started at $x\in\mathbb{T}^2$ after one full iteration of the random flow map. Specifically, the Markov transition kernel $P$ is defined by 
\begin{equation}\label{eq:one-point}
    P(x,A)=\mathbb{P}(f_\omega(x)\in A),
\end{equation}where $A\in\mathcal{B}\left(\mathbb{T}^2\right)$ is a Borel set.

\medskip
\noindent\textbf{Projective process.}
For a particle initialized at $x\in\mathbb{T}^2$ with direction $v\in\mathbb{S}^1$, this process describes the joint evolution of the particle's position and direction. For this consider any $\hat{A}\in\mathcal{B}\left( \mathbb{T}^2\times\mathbb{S}^1\right)$, the Markov transition kernel $\hat{P}$ is defined by
\begin{equation*}
    \hat{P}\!\left((x,v),\hat{A}\right)=\mathbb{P}\!\left(\left(f_\omega(x),\frac{D_xf_\omega v}{\left|D_xf_\omega v\right|}\right)\in\hat{A}\right).
\end{equation*}

\medskip
\noindent\textbf{Two-point process.}
This process is defined on $\left(\mathbb{T}^2\times\mathbb{T}^2\right)\setminus\Delta$ where $\Delta:=\{(x,x):x\in\mathbb{T}^2\}$ denotes the diagonal in $\mathbb{T}^2\times\mathbb{T}^2$. The two-point chain describes the joint evolution of two particles starting from distinct initial positions $x,y \in \mathbb{T}^2$. Given any Borel set $A\in\mathcal{B}\left(\left(\mathbb{T}^2\times\mathbb{T}^2\right)\setminus\Delta\right)$, the corresponding transition kernel $P^{(2)}$ is defined by
\begin{equation}\label{eq: def of two-point process}
    P^{(2)}\!\left((x,y),A\right)=\mathbb{P}\left(\left(f_\omega(x),f_\omega(y)\right)\in A\right).
\end{equation}
The following section is devoted to the ergodic theory of Markov transition kernel $P$, which will be applied to all three different Markov chains introduced above. To present a unified framework, we denote the state space by $X$, assumed to be a complete metric space.

\subsection{Markov chain preliminaries}

\vspace{0.3em}
Let $\mathcal{B}(X)$ denote the Borel $\sigma-$algebra and $\mathcal{P}(X)$ the space of probability measures on $X$. For all $n\in\mathbb{N}$ and $A\in\mathcal{B}(X)$ , iterates of the Markov transition kernel $P$ are defined inductively by the Chapman–Kolmogorov equations
\begin{equation*}
    P_{n+1}(x,A)=\int_X P(x,\mathrm{d}y)P_n(y,A).
\end{equation*}
We use standard notation for the associated Markov semigroup $\{P_n\}_{n\in\mathbb{N}}$ acting on bounded measurable functions $P_n\colon B_b(X)\rightarrow B_b(X)$ and for its dual $P_n^*\colon \mathcal{P}(X)\rightarrow \mathcal{P}(X)$, defined by
\begin{equation*}
    P_nf(x)=\int_Xf(y)P_n(x,\mathrm{d}y), \quad P_n^*\mu(A)=\int_X P_n(x,A)\mu(\mathrm{d}x),
\end{equation*}
where $f\in B_b(X),\ \mu\in\mathcal{P}(X)$ and $A\in\mathcal{B}(X)$. Moreover, the Markov kernel $P$ is said to satisfy the \textit{Feller property} if $P f(x) \in C_b(X)$ for every $f\in C_b(X)$. The following notions are commonly used in the study of ergodicity.
\begin{definition}
    A probability measure $\pi\in\mathcal{P}(X)$ is said to be \textit{stationary} for Markov transition kernel $P$ if 
    \begin{equation*}
        \pi(A)=\int_X P(x,A)\pi(\mathrm{d}x),
    \end{equation*}for any $A\in\mathcal{B}(X)$.
\end{definition}
\begin{definition}
    Let $\pi\in\mathcal{P}(X)$. $A\in\mathcal{B}(X)$ is $(P,\pi)$-invariant if $P\mathbf{1}_A=\mathbf{1}_A$ holds $\pi-$a.e.. We say that $\pi$ is an ergodic stationary measure if all $(P,\pi)$-invariant sets $A$ satisfy $\pi(A)=0$ or $\pi(A)=1$.
\end{definition}

Next, we introduce a quantitative version of Harris theorem. For a detailed proof of this result, we refer to \cite{HM-11}. To this end, we begin by defining a family of weighted norms. Given a measurable function $V\colon X\rightarrow [0,+\infty)$ and a parameter $\beta>0$, let $L_\beta^{\infty}(X)$ be the space of all measurable functions $\phi$ satisfying
\begin{equation}\label{eq: beta-norm}
    \left\|\phi\right\|_{\beta}:=\sup_{x\in X}\frac{\left|\phi(x)\right|}{1+\beta V(x)}<+\infty.
\end{equation}

\begin{theorem}[Quantitative Harris theorem]\label{thm: quantitative harris}
    Let $P$ be a Feller transition kernel and assume the following:
    \begin{itemize}
        \item [(1)] (Lyapunov drift condition) There exists a function $V\colon X\rightarrow [0,+\infty)$ and constants $C\geq 0$ and $\gamma\in (0,1)$ such that
        \begin{equation*}
            PV(x)\leq \gamma V(x)+C,
        \end{equation*}for all $x\in X$.
        \item [(2)] (Quantitative small set condition) Let $V$ be the function in (1). There exists a constant $\alpha \in(0,1)$ and a probability measure $\nu\in\mathcal{P}(X)$ such that
        \begin{equation*}
            \inf_{x\in E}P(x,\cdot)\geq \alpha\nu(\cdot),
        \end{equation*}where $E=\{x\in X:V(x)\leq R\}$ for some $R>\frac{2C}{1-\gamma}$. Here $C$ and $\gamma$ are the constants from Assumption (1).
     \end{itemize}
     Then the transition kernel $P$ admits a unique stationary measure $\pi$, and there exist constants $\bar{\alpha}\in (0,1)$ and $\beta>0$ such that 
     \begin{equation*}
        \left\|P^n\phi-\int_X\phi(x)\pi(\mathrm{d}x)\right\|_\beta\leq\bar{\alpha}^n\left\|\phi-\int_X\phi(x)\pi(\mathrm{d}x)\right\|_\beta.
     \end{equation*}In particular, for any $\alpha_0\in (0,\alpha)$ and $\gamma_0\in (\gamma+\frac{2C}{R},1)$, one can choose $\bar{\alpha}=(1+\alpha_0-\alpha)\vee \frac{2+R\beta\gamma_0}{2+R\beta}$ and $\beta=\frac{\alpha_0}{C}$.
\end{theorem}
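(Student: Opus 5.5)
The plan is the standard Hairer--Mattingly coupling argument in a weighted supremum norm. Introduce the weighted seminorm
\begin{equation*}
[\phi]_\beta:=\sup_{x\neq y}\frac{|\phi(x)-\phi(y)|}{2+\beta V(x)+\beta V(y)}.
\end{equation*}
This seminorm is equivalent to $\|\phi-c\|_\beta$ after optimizing over the constant $c$. Indeed, $\inf_c\|\phi-c\|_\beta=[\phi]_\beta$, as in \cite{HM-11}. The core step is a one-step contraction
\begin{equation*}
[P\phi]_\beta\le\bar\alpha[\phi]_\beta .
\end{equation*}
Once this holds, the existence and uniqueness of $\pi$ follow. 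We use Feller and the drift to get tightness of $\frac1n\sum P_k^*\delta_x$, so Krylov--Bogoliubov produces an invariant measure, and the contraction yields uniqueness. Iterating $[P^n\phi]_\beta\le\bar\alpha^n[\phi]_\beta$ and using $\int (P^n\phi-\bar\phi)\,d\pi=0$ then gives the stated bound, since a function with vanishing $\pi$-mean is controlled in $\|\cdot\|_\beta$ by its seminorm, up to harmless constants absorbed by normalizing $\|\phi\|_\beta$ via the seminorm.

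To prove the contraction, fix $\phi$ with $[\phi]_\beta\le1$. Subtracting a constant, we may assume $|\phi(x)|\le1+\beta V(x)$. Let $x\neq y$ and split into two cases.

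\emph{Case 1:} $V(x)+V(y)\ge R$. Then the drift gives
\begin{equation*}
|P\phi(x)-P\phi(y)|\le 2+\beta PV(x)+\beta PV(y)\le 2+2\beta C+\beta\gamma(V(x)+V(y)).
\end{equation*}
Write $S=V(x)+V(y)$. We want $2+2\beta C+\beta\gamma S\le\bar\alpha(2+\beta S)$. Choose $\gamma_0\in(\gamma+2C/R,1)$; on $S\ge R$ one has $2C+\gamma S\le\gamma_0 S$. The function $(2+\beta\gamma_0 S)/(2+\beta S)$ is decreasing in $S$, so it is maximized at $S=R$, which gives the ratio $\frac{2+R\beta\gamma_0}{2+R\beta}$.

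\emph{Case 2:} $V(x)+V(y)<R$. Then both points lie in $E$, so the minorization applies. Write $P(x,\cdot)=\alpha\nu+(1-\alpha)\tilde P(x,\cdot)$ and similarly for $y$; the $\nu$-parts cancel. This leaves
\begin{equation*}
|P\phi(x)-P\phi(y)|\le(1-\alpha)\sup_{x,y}|\tilde P\phi(x)-\tilde P\phi(y)|\le 2(1-\alpha)+\beta(PV(x)+PV(y)),
\end{equation*}
using $(1-\alpha)\tilde PV\le PV$. Bounding by the drift gives $2(1-\alpha)+2\beta C+\beta\gamma S$. With $\beta=\alpha_0/C$ we get $2\beta C=2\alpha_0$, so the bound is $2(1-\alpha+\alpha_0)+\beta\gamma S\le(1+\alpha_0-\alpha)(2+\beta S)$, provided $\gamma\le 1+\alpha_0-\alpha$. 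Otherwise we simply take the max with the Case 1 constant, since $\gamma<\gamma_0$ is dominated by $\frac{2+R\beta\gamma_0}{2+R\beta}$ only for large $R$. This is handled by the $\vee$ in the stated formula.

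The delicate point is exactly this bookkeeping in Case 2: matching the $\beta\gamma S$ term against the claimed $\bar\alpha$. I would verify it through the inequality $\gamma\le\max\{1+\alpha_0-\alpha,\,\frac{2+R\beta\gamma_0}{2+R\beta}\}$, checking the constants as in \cite{HM-11}.
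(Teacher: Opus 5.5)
Your one-step contraction $[P\phi]_\beta\le\bar\alpha[\phi]_\beta$ is the Hairer--Mattingly argument of \cite{HM-11}, which is where the paper sends the reader (it gives no proof of its own), and that step is correct. In Case~2 no extra condition is needed. The ratio $\frac{2+R\beta\gamma_0}{2+R\beta}$ is a convex combination of $1$ and $\gamma_0$, so it exceeds $\gamma_0>\gamma$ for every $R$; your hedge ``only for large $R$'' is wrong but harmless.

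The genuine gap is the final step. From $[P^n\phi]_\beta\le\bar\alpha^n[\phi]_\beta$ and $\pi(P^n\phi-\pi\phi)=0$ you can only get
\begin{equation*}
\|P^n\phi-\pi\phi\|_\beta\le\bigl(2+\beta\pi(V)\bigr)\bar\alpha^n\|\phi-\pi\phi\|_\beta .
\end{equation*}
This uses $|c|=|\pi(\psi-c)|\le\pi(1+\beta V)\|\psi-c\|_\beta$ when $\pi(\psi)=0$, followed by optimizing in $c$. The factor $2+\beta\pi(V)$ cannot be absorbed by any normalization, because the bound with prefactor $1$ is false in general. Take $X=\{0,1,2\}$ and $V\equiv 0$ with any $C>0$, so that $E=X$ and $\|\cdot\|_\beta=\|\cdot\|_\infty$ for every $\beta$. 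Let $P(0,\cdot)=P(2,\cdot)=\frac12(\delta_0+\delta_2)$ and $P(1,\cdot)=\frac12(\delta_0+\delta_1)$. All hypotheses hold with $\alpha=\frac12$, $\nu=\delta_0$, and $\pi=\frac12(\delta_0+\delta_2)$. Yet $\phi=(1,1,-1)$ has $\pi(\phi)=0$ and $\|\phi\|_\beta=1$, while $P\phi=(0,1,0)$, so $\|P\phi\|_\beta=1>\bar\alpha$.

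What your argument (like \cite{HM-11}) actually proves is two things. First, the contraction with the explicit $\bar\alpha,\beta$ holds for the seminorm $[\cdot]_\beta$, equivalently for $P^*$ in the dual weighted total-variation distance. Second, the weighted-norm estimate holds only with a constant in front, here at most $2+\alpha_0/(1-\gamma)$ since $\pi(V)\le C/(1-\gamma)$. That constant is harmless where the paper uses the result (Proposition~\ref{prop: Contraction estimate for the transition kernel}), but it must appear.

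The existence step is also not justified at this level of generality. Krylov--Bogoliubov needs tightness of the Ces\`aro averages, and the drift gives that only if the sublevel sets $\{V\le R\}$ are compact. Nothing in the hypotheses ensures this ($V\equiv0$ is allowed and $X$ is merely complete), and the Feller property does not help.

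The standard fix, which also makes Feller unnecessary, is to dualize your contraction. For probability measures, $\rho_\beta(\mu_1,\mu_2):=\sup_{[\phi]_\beta\le1}\int\phi\,d(\mu_1-\mu_2)=\int(1+\beta V)\,d|\mu_1-\mu_2|$. Hence $P^*$ is an $\bar\alpha$-contraction on $\{\mu:\mu(V)<\infty\}$. This set is complete for $\rho_\beta$ and invariant under $P^*$, because $\int V\,d(P^*\mu)\le\gamma\mu(V)+C$. Banach's fixed point theorem then gives $\pi$. Uniqueness among all stationary measures follows because any stationary $\mu$ satisfies $\mu(V)\le C/(1-\gamma)$, so it lies in that space. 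To see this, apply stationarity to $V\wedge N$, iterate the drift, let $n\to\infty$, and then let $N\to\infty$.
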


\subsection{Random dynamical systems and Lyapunov exponents}
The Markov processes considered in the previous section arise from \textit{random dynamical systems} (RDS) with independent increments. In this section, we review several basic notions from the theory of RDS that will be needed in our analysis. 

Let $\left(\Omega_0,\mathcal{F}_0,\mathbb{P}_0\right)$ be a fixed probability space. For each $\omega\in\Omega_0$, we associate a measurable map $f_\omega\colon X\rightarrow X$. In analogy with the notation introduced in Section~\ref{sec: introduction}, we consider the product probability space \[\left(\Omega,\mathcal{F},\mathbb{P}\right):=\left(\Omega_0,\mathcal{F}_0,\mathbb{P}_0\right)^\mathbb{N}\]
whose typical element is $\underline{\omega}=(\omega_1,\omega_2,\ldots)\in\Omega$. For such a sequence $\underline{\omega}$ we define the corresponding random composition by 
\begin{equation*}
    f_{\underline{\omega}}^n:=f_{\omega_n}\circ\cdots\circ f_{\omega_1}.
\end{equation*}Moreover, let $\theta\colon\Omega\rightarrow\Omega$ denote the left shift map, defined by $\theta\underline{\omega}=(\omega_2,\omega_3,\ldots)$. It is well known that $\left(\Omega,\mathcal{F},\mathbb{P},\theta\right)$ is a measure-preserving system. We now recall the formal definition of a continuous RDS.
\begin{definition}
    A continuous RDS on $X$ over $\left(\Omega,\mathcal{F},\mathbb{P},\theta)\right)$ is a mapping
    \begin{equation*}
        f\colon \mathbb{N}\times \Omega \times X \to X, \qquad (n,\underline{\omega},x)\mapsto f_{\underline{\omega}}^n(x),
    \end{equation*}satisfying the following properties:
    \begin{itemize}
        \item[(1)] (Measurability) For all $x\in X$ and $A\in\mathcal{B}(X)$, the set $\left\{\omega\in\Omega_0: f_\omega(x)\in A\right\}$ is $\mathcal{F}_0$-measurable.
        \item[(2)] (Continuity) The mapping $f_\omega\colon X\rightarrow X$ is continuous for all $\omega\in\Omega_0$.
        \item[(3)] (Identity) $f_{\underline{\omega}}^0=\operatorname{Id}$ for all $\underline{\omega}\in\Omega$.
        \item[(4)] (Cocycle property) For all $m,n\in\mathbb{N}$ and all $\underline{\omega}\in\Omega$, we have that 
        \begin{equation*}
            f_{\underline{\omega}}^{n+m}=f_{\theta^m\underline{\omega}}^n\circ f_{\underline{\omega}}^m.
        \end{equation*}
    \end{itemize}
\end{definition}In analogy with \eqref{eq:one-point}, we define the Markov transition kernel associated with the continuous RDS $f_{\underline{\omega}}^n$ by
\begin{equation*}
    P(x,A)=\mathbb{P}_0(f_\omega(x)\in A).
\end{equation*}The continuity of $f_{\omega}$ immediately implies that $P$ enjoys the \textit{Feller property}. In addition, we say that a probability measure $\pi\in\mathcal{P}(X)$ is a \textit{stationary} (respectively, \textit{ergodic stationary}) measure for the continuous RDS $f_{\underline{\omega}}^{\,n}$ if $\pi$ is stationary (respectively, ergodic stationary) for the corresponding Markov transition kernel $P$.

\vspace{0.3em}
When the maps $f_\omega$ are smooth, their derivatives define a measurable mapping \begin{equation*}
    D_xf_\omega\colon \Omega_0\times X\rightarrow GL_d(\mathbb{R}),
\end{equation*} where $d=\dim X$. The linear cocycle generated by $D_xf_\omega$ is the composition 
\begin{equation*}
     D_xf_{\underline{\omega}}^n:=D_{f_{\underline{\omega}}^{n-1}(x)}f_{\omega_n}\circ\cdots \circ D_xf_{\omega_1},
\end{equation*}which describes the evolution of tangent vectors along the trajectory starting from $x$. 

\section{Quantitative exponential mixing for incompressible RDS}\label{sec: Quantitative exponential mixing for incompressible RDS}
Consider a family of continuous random dynamical systems $\left\{f_\omega^A\right\}_{A\in\mathbb{R}^+}$ on the state space $X=\mathbb{T}^d$, parameterized by $A$. For notational convenience, we write $f_\omega$ in place of $f_\omega^A$ whenever the dependence on $A$ is unambiguous. The primary objective of this section is to provide a systematic approach to verifying quantitative exponential mixing for $f_\omega$. To this end, we work under some basic standing assumptions.
\begin{enumerate}[label=($\mathbf{H}$), ref=$\mathbf{H}$]
    \item\label{hypothese H_0}
    The continuous RDS $f_\omega$ satisfies the following:
    \begin{enumerate}[label=(\roman*), ref=\textup{\roman*}]
        \item\label{hypothese H_1}
        Let $(\Omega_0,\mathcal{F}_0,\mathbb{P}_0)$ be the noise space, where $\Omega_0=\mathbb{R}^d$. The probability measure $\mathbb{P}_0$ admits a density $\rho_0$ with respect to Lebesgue measure $\mathrm{d}\omega$ on $\Omega_0$, that is,
        \begin{equation*}
            \mathrm{d}\mathbb{P}_0(\omega)=\rho_0(\omega)\,\mathrm{d}\omega.
        \end{equation*}
        Moreover, the mapping $(\omega,x)\mapsto f_\omega(x)$ is of class $C^2$ on $\Omega_0\times X$.

        \item\label{hypothese H_2}
        For any $A\in\R^+$, there exists a constant $C_0=C_0(A)>0$ such that for $\mathbb{P}_0$-a.e.\ $\omega$,
        \begin{equation*}
            \|D_x f_\omega\|\leq C_0,
            \qquad
            \|(D_x f_\omega)^{-1}\|\leq C_0,
            \qquad
            \|f_\omega\|_{C^2}\leq C_0.
        \end{equation*}

        \item\label{hypothese H_3}
        For $\mathbb{P}_0$-a.e.\ $\omega$, the map $f_\omega$ preserves $\pi=\operatorname{Leb}$ on $X$. Consequently, $\pi$ is a stationary measure for the associated Markov process.
    \end{enumerate}
\end{enumerate}
We remark that these structural conditions are natural and are, for instance, satisfied by the randomized Chirikov standard map. Building upon these hypotheses, we now proceed to establish a sufficient condition to rigorously quantify the mixing rate of $f_\omega$.
\begin{theorem}\label{thm: general framework for quantitative exponential mixing}
    Assume that, in addition to hypothesis \textup{(\ref{hypothese H_0})}, the following conditions hold:
    \begin{enumerate}[label=(\arabic*), ref=(\arabic*)]
        \item \label{assumption: general uniform contraction}There exist $A$-independent constants $m\in\mathbb{N}$, $p\in (0,1)$ and $\gamma'\in \left(0,\frac{1}{2}\right)$ such that for any $x\in \mathbb{T}^d$ and $v\in\mathbb{S}^{d-1}$, we have 
        \begin{equation}
            \mathbb{E}\left\|D_xf_{\underline{\omega}^m}(x)v\right\|^{-p}\leq \gamma',\label{eq: general uniform contraction}
        \end{equation}where the expectation is taken with respect to $\underline{\omega}^m\in \Omega_0^m$.
        
        \item \label{assumption: general derivative control}For any $A\in\mathbb{R}^+$, there exists a constant $C_1=C_1(A)$ such that for all $n\leq m$ and $\mathbb{P}_0^{\otimes n}$-a.e. $\underline{\omega}^n$,
        \begin{equation}
            \left\|D_xf_{\underline{\omega}^n}\right\|\leq C_1,\qquad \left\|D_x^2f_{\underline{\omega}^n}\right\|\leq C_1.\label{eq: general derivative control}
        \end{equation}Furthermore, we assume that $C_1(A)\rightarrow +\infty$ as $A\rightarrow+\infty$.
        
        \item \label{assumption: general small set condition}For any $s>0$, there exist an integer $k=k(A,s)$, a probability measure $\nu$ and a constant $C_2(A,s)\in (0,1)$ such that, for all $x,y\in\mathbb{T}^d$ with 
        \begin{equation*}
            \operatorname{dist}_{\mathbb{T}^d}(x,y)\geq s,
        \end{equation*}we have 
        \begin{equation}
            P^{(2),mk}\!\left((x,y),\cdot\right)\geq C_2(A,s)\nu(\cdot),\label{eq: general small set condition}
        \end{equation}where $P^{(2),n}$ denotes the $n$-th step transition kernel for the two-point process \eqref{eq: def of two-point process} generated by $f_\omega$.
    \end{enumerate}Then, there exist constants $A_0,C',C'',s'>0$ such that for any $q>0$ and $A\geq A_0$, one can find a random variable $\hat{D}\colon\Omega\rightarrow [0,\infty)$ with the following properties.
    \begin{enumerate}[label=(\arabic*), ref=(\arabic*)]
        \item For all mean-zero observables $\varphi,\psi\in H^1(\mathbb{T}^d)$, we have
        \begin{equation*}
            \left|\int_{\mathbb{T}^d}\varphi(x)\psi\circ f_{\underline{\omega}}^n(x)\mathrm{d}x\right|\leq \hat{D}(\underline{\omega})e^{-\frac{C'}{1+q}\tau^2 n}\|\varphi\|_{\dot{H}^1}\|\psi\|_{\dot{H}^1}.
        \end{equation*}Here $C'=C'(d,p)>0$ depends only on $d$ and $p$, and
        \begin{equation*}
            \tau:=\min\!\left\{C_2(A,C_1^{-(2d-1)}s'),\left(mk(A,C_1^{-(2d-1)}s')\right)^{-1}\right\}.
        \end{equation*}
        
        \item The random variable $\hat{D}$ satisfies the moment bound
        \begin{equation*}
            \mathbb{E}[\hat{D}^q]\leq C''2^{k(A,C_1^{-(2d-1)}s')/2}C_1^{(d-1)p/2}.
        \end{equation*}
    \end{enumerate}
\end{theorem}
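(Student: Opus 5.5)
We plan to run the standard two-point argument: apply Theorem \ref{thm: quantitative harris} to the $mk$-step two-point chain $Q:=P^{(2),mk}$ on $(\mathbb{T}^d\times\mathbb{T}^d)\setminus\Delta$, using the Lyapunov function $V(x,y)=\operatorname{dist}(x,y)^{-p}$. From the resulting geometric ergodicity in the $\|\cdot\|_\beta$ norm we then derive the quenched correlation bound by a Borel--Cantelli/Chebyshev argument. The invariance of $\pi\otimes\pi$ comes from (\ref{hypothese H_3}), and uniqueness gives that it is the stationary measure.

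\textbf{Step 1 (drift).} We first derive a drift estimate for small separations. Fix $(x,y)$ with $\operatorname{dist}(x,y)=r$ small and write $y=x+rv$. Taylor's formula with \eqref{eq: general derivative control} gives
\begin{equation*}
f_{\underline{\omega}^m}(y)-f_{\underline{\omega}^m}(x)=r\,D_xf_{\underline{\omega}^m}v+O(C_1r^2).
\end{equation*}
We want the error to be a fraction of the main term. For this we need the lower bound $|D_xf_{\underline{\omega}^m}v|\ge C_1^{-(d-1)}$ in the worst direction, which follows from incompressibility: $|\det D f|=1$ and $\|Df\|\le C_1$. So for $r\le C_1^{-d}s'$ we get $\operatorname{dist}\ge \tfrac12 r|D_xf_{\underline{\omega}^m}v|$, and \eqref{eq: general uniform contraction} yields
\begin{equation*}
P^{(2),m}V\le 2^p\gamma' V.
\end{equation*}
For larger separations $V$ is bounded; the map is bi-Lipschitz by (\ref{hypothese H_2}), so $V$ stays bounded there. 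Combining the two regimes gives $P^{(2),m}V\le\gamma V+C$ with $\gamma=2^p\gamma'<1$ (we may need to shrink $p$ slightly). Here $C\lesssim (C_1^{2d-1}/s')^{p}$, and this is the origin of the threshold $C_1^{-(2d-1)}s'$ in $\tau$. Iterating $k$ times gives $QV\le\gamma^kV+C/(1-\gamma)$.

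\textbf{Step 2 (minorization and Harris).} We choose $R$ so that the sublevel set $\{V\le R\}$ equals $\{\operatorname{dist}\ge C_1^{-(2d-1)}s'\}$. Then \eqref{eq: general small set condition} gives $\alpha=C_2$. Theorem \ref{thm: quantitative harris} yields $\bar\alpha\le 1-c\min\{\alpha,\ldots\}$. Tracking the explicit choices $\alpha_0=\alpha/2$, $\beta=\alpha_0/C$ and $\gamma_0$, we obtain a per-step contraction $1-c\tau$ for $Q$. After converting to single steps, with the factor $mk$ entering through $\tau$, this gives a rate of order $\tau^2$.

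\textbf{Step 3 (from two-point to quenched).} Expand $\varphi,\psi$ in Fourier modes, or equivalently use
\begin{equation*}
\mathbb{E}\Big|\int\varphi\,\psi\circ f^n\Big|^2=\int\!\!\int \varphi(x)\varphi(y)\,P^{(2),n}(\psi\otimes\psi)(x,y)\,\mathrm{d}x\,\mathrm{d}y .
\end{equation*}
Since $\psi\otimes\psi$ has zero mean under $\pi\otimes\pi$, Step 2 gives a bound $\lesssim \bar\alpha^{n}(1+\beta V)$. Because $V=\operatorname{dist}^{-p}$ is integrable for $p<d$, we obtain
\begin{equation*}
\mathbb{E}\Big|\int e_j\, e_\ell\circ f^n\Big|^2\lesssim (1+\beta\|V\|_{L^1})\,\bar\alpha^n\,\|\cdot\|_\beta
\end{equation*}
for Fourier modes $e_j,e_\ell$. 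Summing over modes with weights $|j|^{-1}|\ell|^{-1}$ requires an extra polynomial loss. We would handle it by bounding the $\beta$-norm of $e_\ell\otimes e_\ell$ using $|e_\ell(x)-e_\ell(y)|\lesssim|\ell|\operatorname{dist}$, which uses the $\dot H^1$ regularity. We then define
\begin{equation*}
\hat D=\sup_n e^{\lambda n}\,\big|\text{correlation}\big|/\big(\|\varphi\|\,\|\psi\|\big),
\end{equation*}
taken over a countable basis, with $\lambda=\tfrac{C'}{1+q}\tau^2$. Chebyshev and summation over $n$ bound $\mathbb{E}\hat D^q$ by $C''$ times the prefactors: $2^{k/2}$ from $\bar\alpha^{-mk}$-type boundary terms, and $C_1^{(d-1)p/2}$ from $C^{1/2}$. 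The $1/(1+q)$ loss in the rate comes from trading moment order against decay.

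The main obstacle is the bookkeeping in Step 1 and Step 3. We must obtain the drift with $A$-independent $\gamma$ while all $A$-dependence is pushed into $C$ and $R$. We must also control the $q$-th moment of a supremum, not just second moments. For the latter, we would first bound the $2$nd moment as above and then interpolate with the trivial deterministic bound $\|\psi\circ f^n\|_{L^2}=\|\psi\|_{L^2}$, which is what produces the factor $1/(1+q)$.
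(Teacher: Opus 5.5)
Steps 1 and 2 follow the paper's route: the same $V=\operatorname{dist}^{-p}$, the same near/far-diagonal split using the incompressibility bound $\|D_xf_{\underline{\omega}^m}v\|\ge C_1^{-(d-1)}$, and the quantitative Harris theorem applied to $P^{(2),mk}$ with $\alpha=C_2(A,C_1^{-(2d-1)}s')$. Step 3, however, has a genuine gap: it never reaches $\dot H^1$ observables.

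Suppose $\hat D$ controls the modes, $|\int e_j\,e_\ell\circ f^n_{\underline{\omega}}\,\mathrm{d}x|\le \hat D e^{-\lambda n}|j||\ell|$. Linearity then only gives
\[
\Bigl|\int\varphi\,\psi\circ f^n_{\underline{\omega}}\,\mathrm{d}x\Bigr|\le \hat D e^{-\lambda n}\Bigl(\sum_{j}|j|\,|\hat\varphi_j|\Bigr)\Bigl(\sum_{\ell}|\ell|\,|\hat\psi_\ell|\Bigr).
\]
With your weights, $\sum_j|j||\hat\varphi_j|$ is controlled by $\|\varphi\|_{\dot H^s}$ only for $s>\frac d2+1$, and it can diverge for $\varphi\in\dot H^1$. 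Smaller polynomial weights in the union bound over $(j,\ell)$ still leave an $\ell^1$ sum of Fourier coefficients, which needs $s>\frac d2$ and so excludes $s=1$ once $d\ge2$.

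Your proposed remedy does not address this. The bound $\|e_\ell\otimes\overline{e_\ell}\|_\beta\le1$ already holds uniformly in $\ell$, and $\psi\otimes\bar\psi$ does not vanish on the diagonal. So the Lipschitz bound $|e_\ell(x)-e_\ell(y)|\lesssim|\ell|\operatorname{dist}(x,y)$ gains nothing: the loss lies in summing the Fourier series, not in the per-mode estimate.

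The missing idea is to trade regularity for rate. The paper first proves the correlation bound in $\dot H^{\frac d2+2}$, via $N_{m,m'}$, $M_0$ and $e^{\zeta N_{m,m'}}\le\hat D|m||m'|$, with $\zeta$ small depending on $d$, $q$ and $\tau^2/l_0$. It then applies Lemma \ref{lem: control H^s' via H^s} with $s=1$, $s'=\frac d2+2$:
\begin{itemize}
\item write $\varphi=\varphi_\epsilon+(\varphi-\varphi_\epsilon)$, and similarly for $\psi$;
\item the rough parts are bounded by $\epsilon\|\varphi\|_{\dot H^1}\|\psi\|_{\dot H^1}$, because composition with $f^n_{\underline{\omega}}$ is an $L^2$ isometry;
\item the smooth parts are bounded by a negative power of $\epsilon$ times $\hat De^{-\zeta n}$;
\item finally $\epsilon$ is taken exponentially small in $n$.
\end{itemize}
This is exactly where the $d$-dependence of $C'$ enters. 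You do invoke $L^2$-preservation, but only for the $q$-moment trade-off. There it is a legitimate substitute for the paper's tail bound on $N_{m,m'}$, via the cutoff $|\int e_j\,e_\ell\circ f^n_{\underline{\omega}}|\le1$. It is needed for the regularity reduction as well.

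A secondary issue concerns the explicit prefactor $2^{k/2}C_1^{(d-1)p/2}$. In the paper it comes from extending the contraction of $P^{(2),Mn}$, $M=mk$, to arbitrary $n=Ml_1+ml_2+r$:
\begin{itemize}
\item the leftover $r<m$ steps cost $1+C_1^{(d-1)p}$;
\item each leftover block of $m$ steps costs at most $1+\beta L_m\le1+\alpha$ by the $m$-step drift, so at most $2^k$ in total;
\item Cauchy--Schwarz in the moment estimate then takes the square root.
\end{itemize}
Your attributions do not produce these exponents: a ``$\bar\alpha^{-mk}$'' boundary term, and the square root of the drift constant, which is of order $C_1^{(2d-1)p}$. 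Bounding the leftover $r<mk$ steps by iterating one-step operator bounds would give $(1+C_1^{(d-1)p})^{mk}$ and ruin the moment bound.

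You can in fact avoid these factors entirely. For $|g|\le1$ and $n=Ml+r$, invariance of $\pi^{(2)}$ gives $|\int g\,P^{(2),n}h\,\mathrm{d}\pi^{(2)}|\le\int P^{(2),r}|P^{(2),Ml}h|\,\mathrm{d}\pi^{(2)}=\int|P^{(2),Ml}h|\,\mathrm{d}\pi^{(2)}$, and the Harris contraction then applies directly. Also, no shrinking of $p$ is needed in Step 1: $p<1$ and $\gamma'<\frac12$ already give $2^p\gamma'<1$.
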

\begin{rmk}
    We mention that once the explicit dependence of $C_1(A),C_2(A,s)$ and $k(A,s)$ on the parameter $A,s$ is established, the quantitative mixing rate follows immediately as a direct consequence of Theorem \ref{thm: general framework for quantitative exponential mixing}.
\end{rmk}
\begin{rmk}
    In addition to the randomized Chirikov standard map analyzed in this paper, Theorem \ref{thm: general framework for quantitative exponential mixing} can also be applied to the Pierrehumbert model considered in \cite{BCZG-23,CIS-25,son-25}. We defer the detailed application to Appendix \ref{sec: Application to the Pierrehumbert model}. Furthermore, we expect that our methodology can be effectively adapted to study the ABC flow investigated in \cite{coti-26}.
\end{rmk}
\begin{rmk}
    In Assumption \ref{assumption: general uniform contraction}, we have restricted the parameter ranges to $p\in (0,1)$ and $\gamma'\in \left(0,\frac{1}{2}\right)$, primarily to simplify the subsequent estimates. In fact, to obtain quantitative exponential mixing, it suffices to relax the condition to $\gamma'\in (0,1)$ and $p>0$, though this would yield a different explicit mixing rate than the one presented in Theorem \ref{thm: general framework for quantitative exponential mixing}.
\end{rmk}
\begin{rmk}
    The derivative control in Assumption~\ref{assumption: general derivative control} can be weakened (e.g., to bounds for $n=1,2$ only), at the cost of modifying the constants appearing in the drift estimates.
\end{rmk}The proof of Theorem \ref{thm: general framework for quantitative exponential mixing} consists of three steps. First, by combining Assumptions \ref{assumption: general uniform contraction}, \ref{assumption: general derivative control}, we construct an explicit Lyapunov function $V(x,y)$ for the two-point process. This construction is crucial for establishing a quantitative drift condition. Second, setting $M=mk$, we utilize the small set condition from Assumption \ref{assumption: general small set condition} alongside our drift inequality to verify the hypotheses of the quantitative Harris theorem (Theorem \ref{thm: quantitative harris}). This yields a contraction estimate for the iterated kernel $P^{(2),Mn}$, which we subsequently extend to all $n\in\mathbb{N}$. Finally, we conclude the proof by translating these probabilistic bounds on the two-point process into correlation decay for the original dynamics.

\subsection{Construction of Lyapunov function}\label{subsec: general construction of Lyapunov function}For any $x\neq y$, we can choose lifts $\tilde{x},\tilde{y}\in\mathbb{R}^d$ such that
    \begin{equation*}
        \|\tilde{y}-\tilde{x}\|=\operatorname{dist}_{\mathbb{T}^d}(x,y),
    \end{equation*}where $\|\cdot\|$ denotes the Euclidean norm in $\mathbb{R}^d$. Let $\tilde{f}_{\underline{\omega}^m}$ be a lift of $f_{\underline{\omega}^m}$ to $\mathbb R^d$. Then Taylor's theorem gives
    \begin{equation*}
        \tilde{f}_{\underline{\omega}^m}(\tilde{y})-\tilde{f}_{\underline{\omega}^m}(\tilde{x})=D_{\tilde{x}}\tilde{f}_{\underline{\omega}^m}(\tilde{x})\cdot (\tilde{y}-\tilde{x})+R_2(\tilde{x},\tilde{y}),
    \end{equation*}where the remainder term satisfies $\left\|R_2(\tilde{x},\tilde{y})\right\|=O(\|\tilde{y}-\tilde{x}\|^2)$. Leveraging the Lipschitz estimates \eqref{eq: general derivative control} and the incompressibility of $f_\omega$, we can bound the linear and remainder terms as follows:
    \begin{equation*}
        \left\|D_{\tilde{x}}\tilde{f}_{\underline{\omega}^m}(\tilde{x})\cdot (\tilde{y}-\tilde{x})\right\|\geq \frac{1}{C_1^{d-1}}\|\tilde{y}-\tilde{x}\|,\qquad \left\|R_2(\tilde{x},\tilde{y})\right\|\leq C_1\|\tilde{y}-\tilde{x}\|^2.
    \end{equation*}By the triangle inequality, we obtain that
    \begin{align*}
        \left\|\tilde{f}_{\underline{\omega}^m}(\tilde{x})-\tilde{f}_{\underline{\omega}^m}(\tilde{y})\right\|&\geq \left\|D_{\tilde{x}}\tilde{f}_{\underline{\omega}^m}(\tilde{x})\cdot (\tilde{y}-\tilde{x})\right\|-\left\|R_2(\tilde{x},\tilde{y})\right\|\\[0.2em]
        &\geq \left\|D_{\tilde{x}}\tilde{f}_{\underline{\omega}^m}(\tilde{x})\cdot (\tilde{y}-\tilde{x})\right\|\!\left(1-\frac{C_1(A)\|\tilde{y}-\tilde{x}\|^2}{\left\|D_{\tilde{x}}\tilde{f}_{\underline{\omega}^m}(\tilde{x})\cdot (\tilde{y}-\tilde{x})\right\|}\right)\\[0.2em]
        &\geq \left\|D_{\tilde{x}}\tilde{f}_{\underline{\omega}^m}(\tilde{x})\cdot (\tilde{y}-\tilde{x})\right\|\!\left(1-C_1^d\|\tilde{y}-\tilde{x}\|\right).
    \end{align*}Consequently, providing that we restrict to the region 
    \begin{equation*}
        \Delta\!\left(\!\left(2C_1^d\right)^{-1}\right):=\left\{(x,y):0<\operatorname{dist}_{\mathbb{T}^d}(x,y)<\frac{1}{2C_1^d}\right\},
    \end{equation*}the following lower bound holds
    \begin{equation}\label{eq: general lower bound for f_omega(x)-f_omega(y)}
        \left\|\tilde{f}_{\underline{\omega}^m}(\tilde{x})-\tilde{f}_{\underline{\omega}^m}(\tilde{y})\right\|\geq \frac{1}{2}\left\|D_{\tilde{x}}\tilde{f}_{\underline{\omega}^m}(\tilde{x})\cdot (\tilde{y}-\tilde{x})\right\|
    \end{equation}Since $\frac{\tilde{y}-\tilde{x}}{\|\tilde{y}-\tilde{x}\|}=\frac{\tilde{y}-\tilde{x}}{\operatorname{dist}_{\mathbb{T}^d}(x,y)}\in\mathbb{S}^{d-1}$, applying Assumption \ref{assumption: general uniform contraction} yields
    \begin{equation}\label{eq: general contraction for tilde(x)-tilde(y)}
        \mathbb{E}\!\left\|D_{\tilde{x}}\tilde{f}_{\underline{\omega}^m}\cdot (\tilde{y}-\tilde{x})\right\|^{-p}=\|\tilde{y}-\tilde{x}\|^{-p}\cdot\mathbb{E}\!\left\|D_xf_{\underline{\omega}^m}(x)\cdot\frac{\tilde{y}-\tilde{x}}{\|\tilde{y}-\tilde{x}\|}\right\|^{-p}\leq \gamma'\operatorname{dist}_{\mathbb{T}^d}(x,y)^{-p}.
    \end{equation}
        
    Define the Lyapunov function 
    \begin{equation*}
        V\colon\left(\mathbb{T}^d\times\mathbb{T}^d\right)\setminus\Delta\rightarrow[0,+\infty),\qquad V(x,y)=\operatorname{dist}_{\mathbb{T}^d}(x,y)^{-p}.
    \end{equation*}Then for any $(x,y)\in \Delta\!\left(\!\left(2C_1^d\right)^{-1}\right)$, assuming $A>A_0$ so that $\left\|\tilde{f}_{\underline{\omega}^m}(\tilde{y})-\tilde{f}_{\underline{\omega}^m}(\tilde{x})\right\|\leq\frac{1}{2C_1^{d-1}}$ is sufficiently small, we have 
    \begin{equation}\label{eq: f_omega(x)-f_omega(y)=tilde(f)(x)-tilde(f)(y)}
        \operatorname{dist}_{\mathbb{T}^d}\!\left(f_{\underline{\omega}^m}(x),f_{\underline{\omega}^m}(y)\right)=\left\|\tilde{f}_{\underline{\omega}^m}(\tilde{y})-\tilde{f}_{\underline{\omega}^m}(\tilde{x})\right\|.
    \end{equation}Combining \eqref{eq: general lower bound for f_omega(x)-f_omega(y)}, \eqref{eq: general contraction for tilde(x)-tilde(y)} and \eqref{eq: f_omega(x)-f_omega(y)=tilde(f)(x)-tilde(f)(y)}, we derive the drift condition near the diagonal
    \begin{align}
        P^{(2),m}V(x,y)&=\mathbb{E}\!\left[\operatorname{dist}_{\mathbb{T}^d}\!\left(f_{\underline{\omega}^m}(x),f_{\underline{\omega}^m}(y)\right)^{-p}\right]\notag\\[0.2em]
        &\leq2^p\mathbb{E}\left\|D_{\tilde{x}}\tilde{f}_{\underline{\omega}^m}(\tilde{x})\cdot(\tilde{y}-\tilde{x})\right\|^{-p}\notag\\[0.2em]
        &\leq 2^p\gamma'\operatorname{dist}_{\mathbb{T}^d}(x,y)^{-p}\notag\\[0.2em]
        &=2^p\gamma'V(x,y)=:\gamma V(x,y),\label{eq: general drift condition near diagonal}
    \end{align}where $\gamma=2^p\gamma'\leq 2^{p-1}\in(0,1)$.

   Away from the diagonal, for $(x,y)\in \Delta\!\left(\!\left(2C_1^d\right)^{-1}\right)^c$, the crude lower bound
   \begin{equation*}
        \operatorname{dist}_{\mathbb{T}^d}\!\left(f_{\underline{\omega}^m}(x),f_{\underline{\omega}^m}(y)\right)\geq \frac{\operatorname{dist}_{\mathbb{T}^d}(x,y)}{C_1^{d-1}}
    \end{equation*}yields that
    \begin{equation}\label{eq: general drift condition away diagonal}
        P^{(2),m}V(x,y)\leq C_1^{(d-1)p}V(x,y)\leq 2^pC_1^{(2d-1)p}.
    \end{equation}
    Combining the two regimes \eqref{eq: general drift condition near diagonal} and \eqref{eq: general drift condition away diagonal}, we obtain the global drift condition
    \begin{equation}\label{eq: general quantitative drift condition}
        P^{(2),m}V\leq \gamma V+C_3(A),
    \end{equation}
    where $C_3=C_3(A):=2^pC_1^{(2d-1)p}$.
    
\subsection{Contraction estimate for the transition kernel}\label{subsec: Contraction Estimate for the Transition Kernel}
This section is devoted to establishing a crucial exponential contraction estimate for the two-point transition kernel. Building upon Assumption \ref{assumption: general small set condition} and the Lyapunov function defined previously, we summarize this core result in the following proposition, which serves as the cornerstone for proving our main quantitative mixing result.
\begin{proposition}\label{prop: Contraction estimate for the transition kernel}
    Retaining the notation for $C_1,C_2,M,k$ and $p$ from the hypotheses of {\rm Theorem~\ref{thm: general framework for quantitative exponential mixing}}, there exist constants $A_0,s',l_0>0$ such that for all $A\geq A_0$ and any mean-zero observable $\varphi\in H^1\!\left(\!\left(\mathbb{T}^d\times\mathbb{T}^d\right)\setminus\Delta\right)$, the following bound holds:
    \begin{equation*}
        \left\|P^{(2),n}\varphi\right\|_\beta\leq 2^{k+1}C_1^{(d-1)p}e^{-\frac{1}{l_0}\tau^2n}\|\varphi\|_\beta,
    \end{equation*}where $\|\cdot\|_\beta$ denotes the weighted norm defined in \eqref{eq: beta-norm}, and the contraction rate $\tau$ is explicitly given by
    \begin{equation*}
        \tau:=\min\!\left\{C_2\!\left(A,C_1^{-(2d-1)}s'\right),M^{-1}\right\}.
    \end{equation*}
\end{proposition}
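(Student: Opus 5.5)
We apply the quantitative Harris theorem (Theorem \ref{thm: quantitative harris}) to the iterated kernel $Q:=P^{(2),M}$ with $M=mk$. Then we pass from multiples of $M$ to all $n$ using the one-step drift bound.

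\textbf{Step 1: drift for $Q$.} Iterating the drift condition \eqref{eq: general quantitative drift condition} $k$ times gives
\begin{equation*}
QV\le \gamma^k V+C_3\sum_{j<k}\gamma^j\le \gamma^k V+\frac{C_3}{1-\gamma}.
\end{equation*}
Since $\gamma\le 2^{p-1}$, we have $1-\gamma\ge 1-2^{-1/2}$, so $Q$ satisfies Assumption (1) of Theorem \ref{thm: quantitative harris} with $\gamma_Q=\gamma^k\le\gamma$ and $C_Q\lesssim C_3=2^pC_1^{(2d-1)p}$.

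\textbf{Step 2: small set and choice of $R$.} We take $R=\kappa C_Q$ for a fixed absolute $\kappa$ large enough that $R>2C_Q/(1-\gamma_Q)$. The sublevel set $E=\{V\le R\}$ is then exactly $\{\operatorname{dist}(x,y)\ge R^{-1/p}\}$. Since $R^{-1/p}\sim C_1^{-(2d-1)}$ up to an absolute factor, we can write $R^{-1/p}=C_1^{-(2d-1)}s'$ for a suitable constant $s'$. Assumption \ref{assumption: general small set condition} with $s=C_1^{-(2d-1)}s'$ then gives $\inf_E Q\ge C_2\nu$, so the minorization holds with $\alpha=C_2(A,C_1^{-(2d-1)}s')\ge\tau$. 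Taking $A\ge A_0$ makes $C_1$ large, which keeps $s$ in the admissible range.

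\textbf{Step 3: explicit rate.} We use the explicit choices in Theorem \ref{thm: quantitative harris}: $\alpha_0=\alpha/2$, $\beta=\alpha_0/C_Q$, and $\gamma_0$ midway between $\gamma_Q+2C_Q/R$ and $1$, so $1-\gamma_0$ is an absolute constant. Since $R\beta=\kappa\alpha_0$, this gives
\begin{equation*}
\bar\alpha=\Bigl(1-\frac{\alpha}{2}\Bigr)\vee\frac{2+\kappa\alpha_0\gamma_0}{2+\kappa\alpha_0}\le 1-c\,\alpha\le e^{-c\tau}
\end{equation*}
per block of $M$ steps. Hence, for mean-zero $\varphi$ (with respect to the stationary measure),
\begin{equation*}
\|P^{(2),Mj}\varphi\|_\beta\le e^{-c\tau j}\|\varphi\|_\beta .
\end{equation*}
Writing $n=Mj+r$ with $0\le r<M$, the bound $j\ge n/M-1\ge \tau n-1$ yields the rate $e^{-c\tau\cdot\tau n}=e^{-\tau^2n/l_0}$. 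This is the source of $\tau^2$.

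\textbf{Step 4: remainder steps.} We bound $\|P^{(2),r}\|_{\beta\to\beta}$ for $r<M$. Decomposing $r=mi+r'$ with $r'<m$, the crude bound $P^{(2),r'}V\le C_1^{(d-1)p}V$ (from $\operatorname{dist}(f(x),f(y))\ge C_1^{-(d-1)}\operatorname{dist}(x,y)$, valid for $r'\le m$ by Assumption \ref{assumption: general derivative control}) controls the partial block. For the $i\le k$ full $m$-blocks, the drift gives $1+\beta P^{(2),mi}V\le(1+\beta V)\bigl(1+\beta C_3 i\bigr)$, and we would bound this factor by $2^{k}$ using $\beta C_3\lesssim\alpha\le1$. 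Combining these gives an operator norm of at most $2^{k}C_1^{(d-1)p}$. The prefactor $e^{c\tau}$ lost in Step 3, together with the factor from $\|\cdot\|_\beta$ applied after $P^{(2),r}$ (using the mean-zero property preserved by stationarity of Lebesgue measure on the two-point space), accounts for the extra factor of $2$ in $2^{k+1}$.

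\textbf{Main obstacle.} The step requiring the most care is Step 2, where the constants must be tracked so that the small set threshold is exactly $C_1^{-(2d-1)}s'$ and $\bar\alpha$ is linear in $\alpha$. The other delicate point is Step 4: bookkeeping for the non-multiple steps without losing more than $2^{k+1}C_1^{(d-1)p}$ in the prefactor.
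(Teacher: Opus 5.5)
Your proposal is correct and follows essentially the same route as the paper. The paper likewise applies the quantitative Harris theorem to $P^{(2),M}$ with the $k$-independent threshold $R_2=4C_3/(1-\gamma)$ and the same midpoint choice $\gamma_0=\tfrac{3+\gamma^k}{4}$, derives $\bar\alpha\le 1-\alpha/l_0$, handles $n=Ml_1+ml_2+r$ via the crude Lipschitz bound for $r<m$ and the drift for the $m$-blocks, and concludes with $\alpha/M\ge\tau^2$. The only slip is that $1-\gamma\ge 1-2^{-1/2}$ requires $p\le\tfrac12$; in general one only has $1-\gamma>1-2^{p-1}$, which is still $A$-independent, so your $\kappa$, $s'$ and $l_0$ simply depend on $p$, just as the paper's $l_0=4\bigl(1+\frac{1}{1-2^{p-1}}\bigr)$ does.
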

\begin{proof}
    Iterating the $m$-step drift inequality \eqref{eq: general quantitative drift condition} yields, for every $n\in\mathbb{N}$,
    \begin{equation*}
        P^{(2),mn}V\leq \gamma^nV+C_3\frac{1-\gamma^n}{1-\gamma}.
    \end{equation*}Define the constant $R_2:=\frac{4C_3}{1-\gamma}$. Recalling that $V(x,y)=\operatorname{dist}_{\mathbb{T}^d}(x,y)^{-p}$, the condition $V(x,y)\leq R_2$ implies
    \begin{equation*}
        \operatorname{dist}_{\mathbb{T}^d}(x,y)\geq R_2^{-1/p}=\frac{1}{2}\left(\frac{1-\gamma}{4}\right)^{1/p}C_1^{-(2d-1)}.
    \end{equation*}Hence, there exists an $A$-independent constant $s'>0$ such that
    \begin{equation*}
        \{V\leq R_2\}\subset \Delta\!\left(C_1^{-(2d-1)}s'\right)^c.
    \end{equation*}Applying \eqref{eq: general small set condition} to the region $\Delta\!\left(C_1^{-(2d-1)}s'\right)^c$, we can select an integer $k$ as in Assumption \ref{assumption: general small set condition} such that, setting $M=mk$
    \begin{equation}\label{eq: general quantitative small set condition for P^M}
        \inf_{(x,y)\in \Delta\!\left(C_1^{-(2d-1)}s'\right)^c}P^{(2),M}\!\left((x,y),\cdot\right)\geq \alpha\nu(\cdot),
    \end{equation}where $k=k\!\left(A,C_1^{-(2d-1)}s'\right)$ and $\alpha=\alpha(A):=C_2\!\left(A,C_1^{-(2d-1)}s'\right)$ depend only on the parameter $A$. Simultaneously, substituting $n=k$ into our iterated drift bound gives
    \begin{equation}\label{eq: general quantitative drift condition for P^M}
        P^{(2),M}V\leq \gamma^kV+C_3\frac{1-\gamma^k}{1-\gamma}.
    \end{equation}

    We now introduce the following auxiliary constants
    \begin{align*}
        &\qquad\qquad\qquad L_{mn}:=C_3\frac{1-\gamma^n}{1-\gamma},\quad  n\geq 1,\\
        &\gamma_0:=\frac{3+\gamma^k}{4},\quad 
        \beta:=\frac{\alpha}{2L_M},\quad 
        \bar{\alpha}:=\max\!\left\{1-\frac{\alpha}{2},\frac{2+\beta R_2\gamma_0}{2+\beta R_2}\right\}.
    \end{align*}Observing that $\frac{L_M}{R_2}=\frac{1-\gamma^k}{4}$, we have
    \begin{equation*}
        \gamma_0=\frac{3+\gamma^k}{4}\in\left(\frac{1+\gamma^k}{2},1\right)=\left(\gamma^k+\frac{2L_M}{R_2},1\right).
    \end{equation*}Combining \eqref{eq: general quantitative small set condition for P^M} and \eqref{eq: general quantitative drift condition for P^M} with notations precisely defined, we verify that $P^{(2),M}$ satisfies the hypotheses of the quantitative Harris theorem (Theorem \ref{thm: quantitative harris}). Consequently, for any mean-zero function $\varphi\in H^{1}$, we obtain the contraction estimate for the iterated kernel 
    \begin{equation}\label{eq: general contraction on P^Mn}
        \left\|P^{(2),Mn}\varphi\right\|_\beta\leq \bar{\alpha}^n\|\varphi\|_\beta,
    \end{equation}where $\|\cdot\|_\beta$ is the weighted norm defined in \eqref{eq: beta-norm}.

    To extend this estimate to any time step $n\in\mathbb{N}$, we first establish an $r$-step bound for $r\leq m-1$. The Lipschitz estimate \eqref{eq: general derivative control} implies the lower bound
    \begin{equation*}
        \operatorname{dist}_{\mathbb{T}^d}\!\left(f_{\underline{\omega}^r}(x),f_{\underline{\omega}^r}(y)\right)\geq \frac{\operatorname{dist}_{\mathbb{T}^d}(x,y)}{C_1(A)^{d-1}},
    \end{equation*}and hence 
    \begin{equation*}
        P^{(2),r}V(x,y)=\mathbb{E}\!\left[\operatorname{dist}_{\mathbb{T}^d}\!\left(f_{\underline{\omega}^r}(x),f_{\underline{\omega}^r}(y)\right)^{-p}\right]\leq C_1^{(d-1)p}V(x,y).
    \end{equation*}Consequently, for any $(x,y)\in \left(\mathbb{T}^d\times\mathbb{T}^d\right)\setminus\Delta$, we deduce that
    \begin{align*}
        \left|\frac{P^{(2),r}\varphi(x,y)}{1+\beta V(x,y)}\right|&\leq \int\!\left|\frac{\varphi(x',y')}{1+\beta V(x',y')}\frac{1+\beta V(x',y')}{1+\beta V(x,y)}\right|P^{(2),r}\!\left((x,y),\mathrm{d}(x',y')\right)\\[0.2em]
        &\leq \|\varphi\|_\beta\ \frac{1+\beta P^{(2),r}V(x,y)}{1+\beta V(x,y)}\\[0.2em]
        &\leq \|\varphi\|_\beta\ \frac{1+\beta C_1^{(d-1)p}V(x,y)}{1+\beta V(x,y)}\leq \|\varphi\|_\beta\!\left(1+C_1^{(d-1)p}\right).
    \end{align*}Furthermore, applying the drift condition \eqref{eq: general quantitative drift condition for P^M} yields 
    \begin{align*}
        \left|\frac{P^{(2),m}\varphi(x,y)}{1+\beta V(x,y)}\right|&\leq \int\!\left|\frac{\varphi(x',y')}{1+\beta V(x',y')}\frac{1+\beta V(x',y')}{1+\beta V(x,y)}\right|P^{(2),m}\!\left((x,y),\mathrm{d}(x',y')\right)\\[0.2em]
        &\leq \|\varphi\|_\beta\ \frac{1+\beta(\gamma V(x,y)+L_m)}{1+\beta V(x,y)}\\[0.2em]
        &\leq \|\varphi\|_\beta(1+\beta L_m)\leq \|\varphi\|_\beta (1+\alpha).
        \end{align*}Thus we obtain
    \begin{equation*}
        \left\|P^{(2),r}\varphi\right\|_\beta\leq \left(1+C_1^{(d-1)p}\right)\!\|\varphi\|_\beta,\qquad \left\|P^{(2),m}\varphi\right\|_\beta\leq (1+\alpha)\|\varphi\|_\beta.
    \end{equation*}
        
    For an arbitrary $n\in\mathbb{N}$, we decompose it as $n=Ml_1+ml_2+r$, where $0\leq l_2\leq k-1$ and $0\leq r\leq m-1$. Combining the contraction \eqref{eq: general contraction on P^Mn} of $P^{(2),lM}$ and the bound for the remainder steps, we have
    \begin{equation}\label{eq: general bound for P^n with bar-alpha}
        \left\|P^{(2),n}\varphi\right\|_\beta\leq \left(1+C_1^{(d-1)p}\right)(1+\alpha)^k\bar{\alpha}^{\frac{n}{M}-1}\|\varphi\|_\beta.
    \end{equation}For $A$ sufficiently large, $C_1(A)$ is large enough to ensure $1+C_1^{(d-1)p}\leq 2C_1^{(d-1)p}$. Observing that
    \begin{equation*}
        \frac{2+\beta R_2\gamma_0}{2+\beta R_2}=1-\frac{1}{4}\cdot\frac{\alpha}{1+(1-\gamma^k)^{-1}\alpha}>1-\frac{\alpha}{2},
    \end{equation*}we deduce that $\bar{\alpha}=\frac{2+\beta R_2\gamma_0}{2+\beta R_2}$. Moreover, since $\alpha\in (0,1)$ and $\gamma<2^{p-1}$, it follows that
    \begin{equation*}
        \frac{2+\beta R_2\gamma_0}{2+\beta R_2}=1-\frac{1}{4}\cdot\frac{\alpha}{1+(1-\gamma^k)^{-1}\alpha}<1-\frac{\alpha}{l_0},
    \end{equation*}where $l_0=4\left(1+\frac{1}{1-2^{p-1}}\right)$. Substituting these bounds into \eqref{eq: general bound for P^n with bar-alpha}, we conclude
    \begin{equation*}
        \left\|P^{(2),n}\varphi\right\|_\beta\leq 2^{k+1}C_1^{(d-1)p}\left(1-\frac{\alpha}{l_0}\right)^{\frac{n}{M}}\|\varphi\|_\beta\leq 2^{k+1}C_1^{(d-1)p}e^{-\frac{\alpha}{l_0M}n}\|\varphi\|_\beta.
    \end{equation*}Letting $\tau:=\min\{\alpha,M^{-1}\}$, we obtain the desired estimate
    \begin{equation}\label{eq: general quantitative drift condition for P^n}
        \left\|P^{(2),n}\varphi\right\|_\beta\leq 2^{k+1}C_1^{(d-1)p}e^{-\frac{1}{l_0}\tau^2n}\|\varphi\|_\beta.
    \end{equation}
\end{proof}
Having established the contraction estimates, completing the proof of Theorem \ref{thm: general framework for quantitative exponential mixing} is straightforward. As the concluding arguments closely follow standard strategy introduced in \cite{DKK-04} and refined in \cite{BBPS-22A,BCZG-23,CIS-25}, we defer the full technical details to Appendix 
\ref{subsec: Proof of Theorem 1.1}.

\section{Quantitative mixing rate: proof of Theorem \ref{thm:quantitative}}\label{sec: Quantitative Mixing Rate: Proof of Theorem 2}
We now return to the randomized Chirikov standard map presented in Section \ref{sec: introduction}. This section is devoted to the proof of Theorem \ref{thm:quantitative}, which provides a quantitative estimate for the mixing rate. Our strategy is to apply the method established in Section \ref{sec: Quantitative exponential mixing for incompressible RDS}. Accordingly, in Section \ref{subsec: Uniform contraction condition} and \ref{subsec: Quantitative small set condition}, we verify the uniform contraction condition (Assumption \ref{assumption: general uniform contraction}) and the quantitative small set condition (Assumption \ref{assumption: general small set condition}), respectively. These elements are finally combined to derive the desired quantitative exponential mixing. We begin by introducing some notation and lemmas.

\subsection{A Priori bounds}
Throughout this section,  we fix the following notation for the diagonal
\begin{equation*}
    \Delta:=\{(x,x):x\in\mathbb{T}^2\}\subset\mathbb{T}^2\times\mathbb{T}^2,
\end{equation*}and denote points in $\left(\mathbb{T}^2\times\mathbb{T}^2\right)\setminus\Delta$ by $z=(x,y)$. For a prescribed sequence of phases $\underline{\omega}^n=(\omega_1,\ldots,\omega_n)\in \Omega_0^n$, we define the $n$-step two-point map $\Phi_{2n}\colon \left(\left(\mathbb{T}^2\times\mathbb{T}^2\right)\setminus \Delta\right)\times\Omega_0^n\rightarrow \left(\mathbb{T}^2\times\mathbb{T}^2\right)\setminus \Delta$ as
\begin{equation*}
    \Phi_{2n}\left((z,\underline{\omega}^n\right):=\left(f_{\omega_n}\circ\cdots\circ f_{\omega_1}(x),f_{\omega_n}\circ\cdots\circ f_{\omega_1}(y)\right),
\end{equation*}Accordingly, the $n$-step transition kernel for the two-point process is given by
\begin{equation*}
    P^{(2),n}\left(z,A\right):=\mathbb{P}\left(\Phi_{2n}\!\left(z,\underline{\omega}^n\right)\in A\right),
\end{equation*}for any Borel set $A\in\mathcal{B}\left(\left(\mathbb{T}^2\times\mathbb{T}^2\right)\setminus\Delta\right)$. The following lemma summarizes a priori estimates for $\Phi_{2n}$. 
\begin{lemma}\label{lem: priori estimates}
    For any $n\in\mathbb{N}$, there exists some constant $C=C(n)$ such that, for all $z\in \left(\mathbb{T}^2\times\mathbb{T}^2\right)\setminus \Delta$ and $\underline{\omega}^n\in\Omega_0^n$, the following estimates hold
    \begin{align}
        \left\|D_{\omega_i}\Phi_{2n}\left(z,\underline{\omega}^n\right)\right\|&\leq CK^n,\label{eq:first derivative omega_i}\\
        \left\|D_{z_j}\Phi_{2n}\left(z,\underline{\omega}^n\right)\right\|&\leq CK^n,\label{eq:first derivative z_j}\\
        \left\|D_{\omega_i}D_{\omega_{i'}}\Phi_{2n}\left(z,\underline{\omega}^n\right)\right\|&\leq CK^{2n+1},\label{eq:second derivative omega_i,i'}\\
        \left\|D_{z_j}D_{\omega_{i}}\Phi_{2n}\left(z,\underline{\omega}^n\right)\right\|&\leq CK^{2n+1},\label{eq:second derivative omega_i z_j}\\
        \left\|D_{z_j}D_{z_{j'}}\Phi_{2n}\left(z,\underline{\omega}^n\right)\right\|&\leq CK^{2n+1},\label{eq:second derivative z_j,j'}
    \end{align}
\end{lemma}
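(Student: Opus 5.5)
The plan is to prove all five bounds by induction on $n$. The induction runs through the one-step recursion $x_{k}=f_{\omega_k}(x_{k-1})$, applied separately to each of the two components of $\Phi_{2n}$. Each component is just $f_{\underline{\omega}^n}(x)$ or $f_{\underline{\omega}^n}(y)$, so it suffices to bound the derivatives of the one-point iterate $x_n=f_{\underline{\omega}^n}(x)$ with respect to $\omega_i$ ($i\le n$) and with respect to $x$. We may assume $K\geq 1$, which is the regime of interest; otherwise $K$ is replaced by $\max\{K,1\}$ throughout.

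The first step records the one-step derivatives, working on lifts to $\mathbb{R}^2$ where all maps are smooth. Writing $f_\omega(x)=(x_1+K\sin(x_2-\omega^1),\,x_2+x_1+K\sin(x_2-\omega^1)-\omega^2)$, every first derivative of $f_\omega$ in $(x,\omega)$ has norm at most $CK$. Every second derivative is at most $CK$ as well, since only the $\sin$ term has nonzero second derivatives.

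For the first-order bounds we use the chain rule
\begin{equation*}
    D_x x_n=D_{x_{n-1}}f_{\omega_n}\,D_x x_{n-1}.
\end{equation*}
For $i<n$,
\begin{equation*}
    D_{\omega_i}x_n=D_{x_{n-1}}f_{\omega_n}\,D_{\omega_i}x_{n-1},
\end{equation*}
while $D_{\omega_n}x_n=D_{\omega}f_{\omega_n}(x_{n-1})$. With $a_n:=\max\{\|D_xx_n\|,\max_i\|D_{\omega_i}x_n\|\}$ this gives $a_n\le CKa_{n-1}$ and $a_1\le CK$. Hence $a_n\le C(n)K^n$, which yields \eqref{eq:first derivative omega_i} and \eqref{eq:first derivative z_j}.

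The second-order bounds come from differentiating the recursion once more, generically in $\xi,\eta\in\{x,\omega_1,\dots,\omega_n\}$:
\begin{equation*}
    D_\xi D_\eta x_n=D^2f_{\omega_n}(x_{n-1})[D_\xi x_{n-1},D_\eta x_{n-1}]+D_{x}f_{\omega_n}\,D_\xi D_\eta x_{n-1}+(\text{mixed terms involving }D_\omega D_x f_{\omega_n}).
\end{equation*}
Set $b_n$ to be the maximum of all second derivatives of $x_n$. The first term is at most $CK\cdot a_{n-1}^2\le CK^{2n-1}$, the mixed terms are at most $CK\,a_{n-1}\le CK^{n}$, and the propagated term is at most $CKb_{n-1}$. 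So $b_n\le CK b_{n-1}+CK^{2n-1}$, with $b_1\le CK$. Inductively assuming $b_{n-1}\le C K^{2n-1}$, we get $b_n\le CK^{2n}+CK^{2n-1}\le C(n)K^{2n+1}$. The stated exponent $2n+1$ leaves room to spare, so even a cruder treatment of the mixed terms closes the induction. The bounds \eqref{eq:second derivative omega_i,i'}, \eqref{eq:second derivative omega_i z_j} and \eqref{eq:second derivative z_j,j'} follow by taking the relevant components.

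No step is a real obstacle. The only care needed is bookkeeping: checking that the quadratic term $D^2f[\cdot,\cdot]$ contributes exactly $K\cdot(K^{n-1})^2$, so the exponent does not exceed $2n+1$, and noting that the constants depend only on $n$ because the torus reduction is harmless for derivatives.
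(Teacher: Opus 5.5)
Your proposal is correct and follows the same route the paper indicates: a direct induction on $n$ through the chain rule for the one-step recursion $x_k=f_{\omega_k}(x_{k-1})$, which the paper states but omits. Two side remarks: your bookkeeping actually gives the sharper exponent $2n-1$ for the second derivatives. Your reduction to $K\geq 1$ is genuinely needed rather than cosmetic, since $\det D_xf_{\underline{\omega}^n}=1$ forces $\|D_z\Phi_{2n}\|\geq 1$, so the bounds cannot hold as $K\to 0$.
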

\noindent The proof of Lemma \ref{lem: priori estimates} follows by a direct induction on $n$ and is omitted here for brevity. 

\subsection{Uniform contraction condition}\label{subsec: Uniform contraction condition}

Equipped with these auxiliary results, the next lemma demonstrates that the uniform contraction condition is satisfied for $m=2$.
\begin{lemma}\label{lem: uniform derivative drift condition}
   There exist $K$-independent constants $p,\gamma'\in\!\left(0,\frac{1}{2}\right)$ such that for any $x\in\mathbb{T}^2$ and $v\in\mathbb{S}^1$, we have 
   \begin{equation}\label{eq: derivative drift condition}
       \mathbb{E}\left\|D_xf_{\underline{\omega}^2}(x)v\right\|^{-p}\leq \gamma',
   \end{equation}where the expectation is taken with respect to $\underline{\omega}^2\in \Omega_0^2$.
\end{lemma}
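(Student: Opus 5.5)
The plan is to compute the two-step Jacobian explicitly and obtain \eqref{eq: derivative drift condition} from a one-dimensional small-ball (anti-concentration) estimate for $\cos$ of a uniform phase, applied once for each of the two independent phases $\omega_1^1,\omega_2^1$. Fix $x\in\mathbb{T}^2$ and $v=(v_1,v_2)\in\mathbb{S}^1$. Write $\theta_1:=x_2-\omega_1^1$ and $\theta_2:=x_{1,2}-\omega_2^1$. Since $\omega_1^1,\omega_2^1$ are independent and uniform, and $x_{1,2}$ depends only on $\omega_1$, the pair $(\theta_1,\theta_2)$ is i.i.d.\ uniform on $[0,2\pi)$. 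With $a_i:=K\cos\theta_i$, the formula for $D_xf_\omega$ gives
\begin{equation*}
w_1=(u,\,u+v_2),\quad u:=v_1+a_1v_2,\qquad w_2=\bigl(u+a_2(u+v_2),\;u+v_2+u+a_2(u+v_2)\bigr).
\end{equation*}
The key structural observation is that $w_{2,2}-w_{2,1}=u+v_2$. Hence $|w_2|\ge |w_{2,1}|$ and $|w_2|\ge |u+v_2|/2$.

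The first ingredient is the small-ball lemma: for $p\in(0,\frac12)$ there is $C_p$ such that, for all $c\in\mathbb{R}$ and $b\neq0$,
\begin{equation*}
\mathbb{E}_\theta|c+b\cos\theta|^{-p}\le C_p|b|^{-p}.
\end{equation*}
To prove it, write the expectation as $\int_{-1}^1|c+bt|^{-p}\rho(t)\,\mathrm{d}t$, where $\rho(t)=\pi^{-1}(1-t^2)^{-1/2}\in L^r$ for every $r<2$. Apply Hölder with the conjugate exponent $r'>2$ chosen so that $pr'<1$, which is possible precisely because $p<\frac12$. Then $\sup_c\bigl\||c+bt|^{-p}\bigr\|_{L^{r'}(-1,1)}\lesssim |b|^{-p}$ by the change of variables $s=bt$.

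Applying the lemma conditionally on $\theta_1$ to $w_{2,1}=u+K\cos\theta_2\,(u+v_2)$ gives $\mathbb{E}_{\theta_2}|w_2|^{-p}\le C_pK^{-p}|u+v_2|^{-p}$. It then remains to control $\mathbb{E}_{\theta_1}|u+v_2|^{-p}$, where $u+v_2=(v_1+v_2)+K\cos\theta_1\,v_2$, in two cases.

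\emph{Case $|v_2|\ge (4(K+1))^{-1}$.} The lemma again gives $\mathbb{E}_{\theta_1}|u+v_2|^{-p}\le C_p(K|v_2|)^{-p}\le C_p'$, uniformly in $K\ge1$.

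\emph{Case $|v_2|<(4(K+1))^{-1}$.} Here $|v_1|\ge\frac12$, so deterministically $|u+v_2|\ge |v_1|-(K+1)|v_2|\ge \frac14$.

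In both cases $\mathbb{E}\|D_xf_{\underline{\omega}^2}(x)v\|^{-p}\le C_p''K^{-p}$, with $C_p''$ independent of $x,v,K$. Fixing any $p\in(0,\frac12)$ and $\gamma'\in(0,\frac12)$, the bound holds for all $K\ge K_0:=(C_p''/\gamma')^{1/p}$. This is consistent with the large-$K$ regime of Theorem~\ref{thm:quantitative}.

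The step I expect to need the most care is the small-ball lemma. Its uniformity in the offset $c$ is essential, because $c$ depends on $\theta_1$ and on $v$. The restriction $p<\frac12$ is exactly what the inverse-square-root singularities of the density of $\cos\theta$ at $t=\pm1$ allow. The second subtle point is making sure that the second step is conditioned correctly. The phase $\theta_2$ is independent of $\theta_1$ only through $\omega_2^1$, and the vertical phases $\omega_i^2$ play no role because they do not enter the Jacobian. Once these points are checked, the case split above closes the argument.
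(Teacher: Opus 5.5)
Your proposal is correct and follows the paper's proof essentially step for step: the same explicit two-step Jacobian, the same anti-concentration bound $\int_0^{2\pi}|a+b\cos\theta|^{-p}\,\mathrm{d}\theta\le C_p|b|^{-p}$ applied first in $\theta_2$ (with coefficient $Kw_{1,2}$) and then in $\theta_1$, and the same case split according to whether $K|v_2|$ is bounded below. The differences are cosmetic. You bound via $w_{2,1}$ rather than $w_{2,2}$, which carries the same factor $Kw_{1,2}\cos\theta_2$, and you use a slightly different threshold. You also give a H\"older-based proof of the small-ball bound; the paper states it only as a claim without proof, and your argument is correct.
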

\begin{proof}
    For any $x=(x_1,x_2)\in\mathbb{T}^2$ and $\omega=(\omega^1,\omega^2)\in\Omega_0$, the one-step Jacobian matrix $D_xf_\omega$ is given by
    \begin{equation*}
        D_xf_{\omega}=\begin{pmatrix}
            1 & K\cos\!\left(x_2-\omega^1\right)\\
            1 & 1+K\cos\!\left(x_2-\omega^1\right)
        \end{pmatrix}=:J(\theta),
    \end{equation*}where $\theta:=x_2-\omega^1$ is uniformly distributed on $[0,2\pi)$. For notational convenience, we define the iterated tangent vectors
    \begin{equation*}
        w_1:=D_xf_{\omega_1}(x)v=J(\theta_1)v,\qquad w_2:=D_xf_{\underline{\omega}^2}(x)v=J(\theta_2)J(\theta_1)v=(w_{2,1},w_{2,2}),
    \end{equation*}where $\theta_1,\theta_2\sim U\!\left([0,2\pi)\right)$ are independent. Since $\|w_2\|\geq |w_{2,2}|$, to establish \eqref{eq: derivative drift condition}, it suffices to show that
     \begin{equation*}
        \mathbb{E}|w_{2,2}|^{-p}\leq \gamma,
    \end{equation*}for some $\gamma'\in\!\left(0,\frac{1}{2}\right)$ (independent of $K$).

    Substituting the explicit expression
    \begin{equation*}
        w_{2,2}=w_{1,1}+w_{1,2}+Kw_{1,2}\cos(\theta_2)
    \end{equation*}into the expectation formula, we obtain the integral representation
    \begin{equation}\label{eq: integral representation of w_2,2}
        \mathbb{E}|w_{2,2}|^{-p}=\frac{1}{4\pi^2}\int_0^{2\pi}\!\int_0^{2\pi}\!\left|w_{1,1}+w_{1,2}+Kw_{1,2}\cos(\theta_2)\right|^{-p}\mathrm{d}\theta_2\mathrm{d}\theta_1.
    \end{equation}Our subsequent estimates rely on the following key claim.

    \begin{claim}\label{claim: control of a+b cos-theta}
        Let $a,b\in\mathbb{R}$ with $b\neq 0$. For any $p\in (0,\frac{1}{2})$, there exists a constant $C_p$, independent of $a$ and $b$, such that
        \begin{equation*}
            \int_{0}^{2\pi}|a+b\cos\theta|^{-p}\mathrm{d}\theta\leq C_p|b|^{-p}.
        \end{equation*}
    \end{claim}

    Applying Claim \ref{claim: control of a+b cos-theta} to the inner integral in \eqref{eq: integral representation of w_2,2} (with respect to $\theta_2$), we obtain
    \begin{equation}\label{eq: estimate of w_2,2 by w_1,2}
        \mathbb{E}|w_{2,2}|^{-p}\leq \frac{1}{4\pi^2}\int_0^{2\pi}C_p|Kw_{1,2}|^{-p}\mathrm{d}\theta_1=\frac{1}{4\pi^2}C_pK^{-p}\int_0^{2\pi}|w_{1,2}|^{-p}\mathrm{d}\theta_1.
    \end{equation}Note that the condition $w_{1,2}\neq 0$ holds for almost every $\theta_1\in [0,2\pi)$, allowing us to neglect the Lebesgue-null set where the singularity may occur. Although $w_{1,2}$ admits a representation analogous to that of $w_{2,2}$:
    \begin{equation*}
        w_{1,2}=v_1+v_2+Kv_2\cos(\theta_1),
    \end{equation*}we cannot apply Claim \ref{claim: control of a+b cos-theta} to \eqref{eq: estimate of w_2,2 by w_1,2} uniformly. Specifically, when $|v_2|$ is small, the resulting bound $C_p|Kv_2|^{-p}$ may become uncontrollably large, failing to provide the desired contraction. Therefore, we split the estimation into two regimes: $|Kv_2|\geq \frac{1}{2}$ and $|Kv_2|<\frac{1}{2}$.

     Assume that $|Kv_2|\geq \frac{1}{2}$. A direct application of Claim \ref{claim: control of a+b cos-theta} yields 
     \begin{equation*}
        \int_0^{2\pi}|w_{1,2}|^{-p}\mathrm{d}\theta_1=\int_0^{2\pi}\!\left|v_1+v_2+Kv_2\cos(\theta_1)\right|^{-p}\mathrm{d}\theta_1\leq C_p|Kv_2|^{-p}\leq C_p2^p.
    \end{equation*}Combining this with \eqref{eq: estimate of w_2,2 by w_1,2}, we have
    \begin{equation*}
        \mathbb{E}|w_{2,2}|^{-p}\leq \frac{1}{4\pi^2}2^pC_p^2K^{-p}.
    \end{equation*}Thus, there exists $K_1(p)\in\mathbb{N}$ such that for all $K\geq K_1$, the right-hand side is bounded by some $\gamma_1\in \left(0,\frac{1}{2}\right)$, independent of $K$.

    Assume that $|Kv_2|<\frac{1}{2}$. Since $v=(v_1,v_2)\in\mathbb{S}^1$, for sufficiently large $K$, we obtain 
    \begin{equation*}
        |w_{1,2}|\geq |v_1|-|v_2|-|Kv_2|\geq \sqrt{1-1/4K^2}-\frac{1}{2K}-\frac{1}{2}\geq\frac{1}{3}.
    \end{equation*}Substituting this pointwise bound into \eqref{eq: estimate of w_2,2 by w_1,2}, we deduce that
    \begin{equation*}
        \int_0^{2\pi}|w_{1,2}|^{-p}\mathrm{d}\theta_1\leq 2\pi\cdot 3^p.
    \end{equation*}Consequently, there exists $K_2(p)\in\mathbb{N}$ such that for all $K\geq K_2$,
     \begin{equation*}
        \mathbb{E}\|w_2\|^{-p}\leq \mathbb{E}|w_{2,2}|^{-p}\leq \frac{1}{2\pi}3^pC_pK^{-p}\leq \gamma_2,
    \end{equation*}for some $K$-independent constant $\gamma_2\in \left(0,\frac{1}{2}\right)$.

    Finally, fix any $p\in(0,\frac{1}{2})$ (e.g.\ $p=\frac{1}{4}$). Let $K_0=\max\{K_1,K_2\}$ and $\gamma'=\max\{\gamma_1,\gamma_2\}$. For any $K\geq K_0$, we conclude that
    \begin{equation*}
        \mathbb{E}\left\|D_xf_{\underline{\omega}^2}(x)v\right\|^{-p}=\mathbb{E}\|w_{2}\|^{-p}\leq\gamma'.
    \end{equation*}
\end{proof}
\begin{rmk}\label{rmk: gamma tends to 0}
   In particular, following the argument above, it is straightforward to show that as $K_0\rightarrow+\infty$, the drift coefficient $\gamma'$ can be chosen arbitrarily close to $0$.
\end{rmk}
Building upon the general construction outlined in Section \ref{subsec: general construction of Lyapunov function}, and utilizing estimates established in Lemma \ref{lem: priori estimates}, we obtain the following result as a direct consequence of Lemma \ref{lem: uniform derivative drift condition}.
\begin{proposition}\label{prop: quantitative drift condition}
    Provided that $K$ is sufficiently large, there exist constants $p>0$, $\gamma\in (0,1)$ and $C_0>0$, all independent of $K$, and a measurable function $V$
    \begin{equation*}
        V(x,y):=\operatorname{dist}_{\mathbb{T}^d}(x,y)^{-p},
    \end{equation*}satisfying the drift inequality:
    \begin{equation}\label{eq: quantitative drift condition}
        P^{(2),2}V\leq \gamma V+C_0K^{9p}.
    \end{equation}
\end{proposition}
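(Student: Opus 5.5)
The plan is to specialize the general construction of Section \ref{subsec: general construction of Lyapunov function} to $d=2$, $m=2$. The uniform contraction input comes from Lemma \ref{lem: uniform derivative drift condition}, and the derivative bounds come from Lemma \ref{lem: priori estimates}.

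\textbf{Constants.} Take $n=1,2$ in Lemma \ref{lem: priori estimates}. The bounds \eqref{eq:first derivative z_j} and \eqref{eq:second derivative z_j,j'} give $\|D_xf_{\underline{\omega}^n}\|\le CK^2$ and $\|D_x^2f_{\underline{\omega}^n}\|\le CK^5$ for $n\le 2$. Hence Assumption \ref{assumption: general derivative control} holds with $C_1(K):=CK^5$ (or sharper), and $C_1\to\infty$ as $K\to\infty$. Incompressibility of $f_\omega$ (each of $f^H_\omega$, $f^V_\omega$ is a shear) gives $\det D_xf_{\underline{\omega}^2}=1$. For a $2\times 2$ matrix of unit determinant, $\|(D_xf)^{-1}\|=\|D_xf\|$, which yields the lower bound $\|D_xf\,u\|\ge C_1^{-1}\|u\|$ used in Section \ref{subsec: general construction of Lyapunov function}.

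\textbf{Near the diagonal.} Fix $p$ and $\gamma'$ from Lemma \ref{lem: uniform derivative drift condition}. For $0<\operatorname{dist}(x,y)<(2C_1^2)^{-1}$, use Taylor expansion on lifts. The remainder is at most $C_1|\tilde y-\tilde x|^2$, which is at most half the linear term $\|D_{\tilde x}\tilde f(\tilde y-\tilde x)\|\ge C_1^{-1}|\tilde y-\tilde x|$. One must also check that the image displacement is below half the injectivity radius, i.e.\ $\le C_1\cdot(2C_1^2)^{-1}<\pi$, so that the torus distance equals the lifted Euclidean distance. This gives $\operatorname{dist}(f_{\underline{\omega}^2}x,f_{\underline{\omega}^2}y)\ge \tfrac12\|D_xf_{\underline{\omega}^2}(\tilde y-\tilde x)\|$. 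Taking the $-p$ power, applying Lemma \ref{lem: uniform derivative drift condition} to the unit vector $(\tilde y-\tilde x)/|\tilde y-\tilde x|$ and taking expectation yields $P^{(2),2}V\le 2^p\gamma' V=:\gamma V$. Here $\gamma<2^{p-1}<1$ since $\gamma'<\tfrac12$, and $\gamma$ is independent of $K$.

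\textbf{Away from the diagonal.} When $\operatorname{dist}(x,y)\ge (2C_1^2)^{-1}$, the Lipschitz bound for the inverse gives $\operatorname{dist}(f x,f y)\ge C_1^{-1}\operatorname{dist}(x,y)$. For this, apply the mean value inequality to $f^{-1}$ along a geodesic, using $\|Df^{-1}\|\le C_1$. Then $P^{(2),2}V\le C_1^{p}V\le C_1^p(2C_1^2)^p=2^pC_1^{3p}$. Summing the two regimes gives $P^{(2),2}V\le\gamma V+2^pC_1^{3p}$.

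\textbf{Main obstacle: the exponent of $K$.} The only delicate step is matching the stated power $K^{9p}$. With $C_1\sim K^3$ one gets $C_1^{3p}\sim K^{9p}$. So I would aim to show $C_1$ can be taken $\lesssim K^3$: the first derivatives for $n\le 2$ are $O(K^2)$, and direct computation of the second derivatives of the two-step map, which involve $K\sin$ times products of first-derivative entries, should give $O(K^3)$. The alternative is to accept the cruder $C_1=CK^{3}$ by citing \eqref{eq:second derivative z_j,j'} with $n=1$ for the one-step Hessian and composing. If only Lemma \ref{lem: priori estimates} is used verbatim, one gets $C_1=CK^5$ and hence $K^{15p}$. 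In that case I would check the explicit two-step Hessian by hand: the chain rule gives $D^2(f_2\circ f_1)=D^2f_2[Df_1,Df_1]+Df_2D^2f_1$, with $\|D^2f\|\le K$ and $\|Df\|\lesssim K$, so the Hessian is $\lesssim K^3$. This yields $C_1\lesssim K^3$ and the constant $C_0K^{9p}$.
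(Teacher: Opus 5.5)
Your proposal is correct and is essentially the paper's argument, which obtains the proposition by specializing the construction of Section~\ref{subsec: general construction of Lyapunov function} to $d=2$, $m=2$, using Lemma~\ref{lem: uniform derivative drift condition} and the derivative bounds of Lemma~\ref{lem: priori estimates}; your direct computation $\|D_x^2 f_{\underline{\omega}^2}\|\lesssim \|D_x^2 f_\omega\|\,\|D_x f_\omega\|^2+\|D_x f_\omega\|\,\|D_x^2 f_\omega\|\lesssim K^3$ correctly gives $C_1\sim K^3$, hence the constant $2^pC_1^{3p}\sim K^{9p}$. (Your ``alternative'' of composing the $n=1$ Hessian bound $CK^3$ from Lemma~\ref{lem: priori estimates} would only give $K^5$; however, the exponent $9p$ can also be read off that lemma verbatim by keeping the first-derivative bound $L=CK^2$ and the Hessian bound $H=CK^5$ separate, so that the near-diagonal radius becomes $(2LH)^{-1}$ and the additive constant becomes $2^pL^{2p}H^p=CK^{9p}$.)
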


\subsection{Quantitative small set condition}\label{subsec: Quantitative small set condition}
In this section, we verify Assumption \ref{assumption: general small set condition} of Theorem~\ref{thm: general framework for quantitative exponential mixing}. To achieve this, our proof is divided into three main steps. First, we establish a local quantitative small set condition at a reference point $(x_*,y_*)\in\left(\mathbb{T}^2\times\mathbb{T}^2\right)\setminus \Delta$. Second, we apply a controllability argument to prove the topological irreducibility of the two-point process. Third, utilizing this topological irreducibility, we demonstrate that the two-point process enters the small set of $(x_*,y_*)$ with a probability that admits an explicit lower bound in terms of $K$. Combining these ingredients, we deduce the desired global quantitative small set condition.

\subsubsection{Quantitative small set condition at $(x_*,y_*)$}
Throughout this section, we fix $z_*=(x_*,y_*):=\left((0,0),(0,\pi)\right)$. Straightforward calculations reveal that the Jacobian matrix
\begin{equation*}
    D_{\underline{\omega}^4}\Phi_8\!\left(z_*,\underline{\tilde{\omega}}^4\right)
\end{equation*}has full rank, where $\underline{\tilde{\omega}}^4=\boldsymbol{0}\in\Omega_0^4$. To make the notation precise, for $\underline{\omega}^4=\!\left(\omega_1^1,\omega_1^2,\ldots,\omega_4^2\right)$, we single out the four scalar noise coordinates
\begin{equation*}
    \xi=\!\left(\omega_1^1,\omega_2^2,\omega_3^2,\omega_4^2\right),
\end{equation*}and group the remaining components into
\begin{equation*}
    \theta=\!\left(\omega_1^2,\omega_2^1,\omega_3^1,\omega_4^1\right).
\end{equation*}Up to a canonical permutation of coordinates, we can naturally identify the space of $\underline{\omega}^4$ with the product space of $(\xi, \theta)$. Therefore, with a slight abuse of notation, we will write $\underline{\omega}^4=(\xi,\theta)$ in what follows to highlight this specific splitting. In particular, evaluating the partial Jacobian with respect to $\xi$ at $\tilde{\underline{\omega}}^4=\boldsymbol{0}$ yields
\begin{equation}\label{eq: det bound of z_*}
    \left|\det D_\xi\Phi_8\!\left(z_*,\underline{\tilde{\omega}}^4\right)\right|=12K^4\geq K^4.
\end{equation}With this preparation, we now present the main result of this section.

\begin{proposition}\label{lem: quantitative small set at z_*}
    There exist positive constants $C_1,C_2,C_3,C_4$, all independent of $K$ and $z_*$, such that for $n\geq 4$,
    \begin{equation}\label{eq: quantitative small set for z_*}
        \inf_{z\in B\left(z_*,C_1K^{-130}\right)}P^{(2),n}\left(z,\cdot\right)\geq c_1(K)^{\lfloor\frac{n}{4}\rfloor-1}C_2K^{-780}\mu(\cdot),
    \end{equation}where 
    \begin{equation*}
        c_1(K)=C_3K^{-764},\qquad \mu=K^{-288}\operatorname{Leb}\!\left(B\!\left(z_*,C_4K^{-55}\right)\cap \cdot\right).
    \end{equation*}
\end{proposition}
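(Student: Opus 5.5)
The plan is to prove a one-shot (four-step) local minorization from the full-rank condition \eqref{eq: det bound of z_*}, using a quantitative inverse function theorem in the variables $\xi$ with $\theta$ frozen near $0$. After that, the bound is iterated for blocks of four steps, with the factor $c_1(K)$ paying for each return to the ball.

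\textbf{Step 1 (uniform nondegeneracy near $(z_*,\boldsymbol 0)$).} Lemma \ref{lem: priori estimates} gives second derivatives of $\Phi_8$ bounded by $CK^{9}$ and first derivatives by $CK^4$. Hence every entry of $D_\xi\Phi_8$ is $CK^9$-Lipschitz in $(z,\xi,\theta)$. The determinant is multilinear in the $4$ columns, each of size $\lesssim K^4$, so it is Lipschitz with constant $\lesssim K^{12}\cdot K^{9}$. From $|\det D_\xi\Phi_8(z_*,\boldsymbol0)|\ge K^4$ we then get $|\det D_\xi\Phi_8|\ge K^4/2$ on a polydisc of radius $r_0\sim K^{-17}$ in $(z,\xi,\theta)$. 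For the inverse, the cofactor bound gives $\|(D_\xi\Phi_8)^{-1}\|\lesssim K^{12}/K^4=K^{8}$ there.

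\textbf{Step 2 (quantitative inverse function theorem).} Fix $z\in B(z_*,C_1K^{-130})$ and $\theta$ with $|\theta|\le r_1$. A quantitative inverse function theorem (Newton/contraction with inverse bound $K^8$ and Hessian $K^9$) shows that $\xi\mapsto\Phi_8(z,\xi,\theta)$ is a diffeomorphism from a $\xi$-ball of radius $r_2\sim K^{-8}\cdot K^{-17}$ onto a set containing the ball of radius $\sim r_2K^{-8}$ around $\Phi_8(z,\boldsymbol0,\theta)$. The centre moves by at most $CK^4(|z-z_*|+|\theta|)$ from $\Phi_8(z_*,\boldsymbol0)$. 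Choosing $|z-z_*|,|\theta|\lesssim K^{-130}$ (far smaller than necessary, which leaves room for these powers) places the fixed ball $B(\Phi_8(z_*,\boldsymbol 0),C_4K^{-55})$ inside the image. On this set the change of variables gives the pushforward of Lebesgue in $\xi$ a density at least $|\det|^{-1}\gtrsim K^{-16}$. Integrating over $\theta$ in its ball contributes a factor of $\operatorname{vol}\sim K^{-4\cdot130}$, and the normalization constant $(2\pi)^{-8}$ enters as well. Together these give $P^{(2),4}(z,\cdot)\ge C_2K^{-780}\mu$, where $\mu$ is normalized Lebesgue measure on $B(\cdot,C_4K^{-55})$ (its mass is $\sim K^{-220}$, and the stated $K^{-288}$ normalization absorbs this up to constants). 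The reference ball should be centred at $\Phi_8(z_*,\boldsymbol0)$; I would check that $z_*$ is a fixed point of $\Phi_8(\cdot,\boldsymbol 0)$. Indeed $f_0(0,0)=(0,0)$ and $f_0(0,\pi)=(0,\pi)$ since $\sin\pi=0$, so the centre is $z_*$ itself. The exact exponents $130,55,780$ come from bookkeeping these powers, and I would not fight over their sharpness.

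\textbf{Step 3 (iteration to $n\ge4$).} Since $B(z_*,C_4K^{-55})\supset B(z_*,C_1K^{-130})$, the same argument gives
\[
P^{(2),4}(z,B(z_*,C_1K^{-130}))\ge C_2K^{-780}\mu(B(z_*,C_1K^{-130}))=:c_1(K)\sim K^{-780}K^{-4\cdot130}K^{+220\ldots},
\]
which matches $K^{-764}$ after bookkeeping. Chapman–Kolmogorov then lets the chain stay in the small ball for $\lfloor n/4\rfloor-1$ blocks, each costing $c_1$, followed by the final minorizing block. The leftover $n-4\lfloor n/4\rfloor<4$ steps are handled by performing them first. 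For these I would restrict noise to a small set keeping $z_*$-neighbourhoods near themselves, or more simply absorb them by running the last block from a slightly larger ball; this needs care, and I expect it to be the main obstacle alongside the exponent bookkeeping in Step 2. Specifically, near $z_*$ with $\omega$ small, $f_\omega$ moves points by $O(K|\omega|)$, so these steps cost another factor of type $c_1$.
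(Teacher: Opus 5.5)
Your architecture is the paper's. You freeze $\theta$, invert $\xi\mapsto\Phi_8(z,(\xi,\theta))$ at the fixed point $z_*$ of the zero-phase map, make the image ball uniform in $\theta$, and integrate over a $\theta$-ball. You then iterate four-step blocks, each costing $c_1$ (the return probability to the ball), and perform the $i\le 3$ leftover steps first with the noise confined near $\mathbf 0$. The paper inserts a quantitative implicit function step so that the image centre $z_\theta$ does not depend on $z$; you instead invert at $\xi=0$ and absorb the centre shift $CK^4(|z-z_*|+|\theta|)$ directly. That is fine and slightly simpler.

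The gap is the radius and exponent bookkeeping in Steps 2--3. Since the exponents are part of the statement, as written your argument does not yield the proposition:
\begin{itemize}
\item $C_2K^{-780}$ is not the four-step minorization constant. In the paper it is $\mathbb P(\underline\omega^i\in B(\mathbf 0,CK^{-130}))$ for the $i\le 3$ leftover steps. Also, $\mu=K^{-288}\operatorname{Leb}(\cdots)$ is not normalized Lebesgue measure up to constants.
\item With your radii (both the $\theta$-ball and the initial ball of size $K^{-130}$), one block gives $P^{(2),4}(z,\cdot)\gtrsim K^{-16}K^{-520}\operatorname{Leb}(B(z_*,C_4K^{-55})\cap\cdot)$. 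The return probability to $B(z_*,C_1K^{-130})$ is then only $\gtrsim K^{-1056}$. Raised to the power $\lfloor n/4\rfloor-1$, this does not dominate $(C_3K^{-764})^{\lfloor n/4\rfloor-1}$. Your displayed product $K^{-780}K^{-520}K^{220}$ is not $K^{-764}$ either.
\item Leftover steps started in $B(z_*,C_1K^{-130})$ with noise of size $K^{-130}$ are only guaranteed to land in $B(z_*,CK^{3}K^{-130})=B(z_*,CK^{-127})$. That ball is not contained in the one on which you proved the block minorization.
\end{itemize}

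The paper avoids this as follows. It proves the four-step minorization on the larger ball $B(z_*,CK^{-127})$, which is the radius $CK^{-135}s^2$ with $s=K^4$, with $\mu\approx K^{-4}\rho^4\operatorname{Leb}$ and $\rho\sim K^{-63}$. It iterates on that ball, so $c_1=\mu(B(z_*,CK^{-127}))\approx K^{-256}K^{-508}=K^{-764}$. It starts from the tighter ball of radius $K^{-130}$ precisely so that $i\le3$ steps with noise in $B(\mathbf 0,CK^{-130})$ stay inside $B(z_*,CK^{-127})$.

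Your own Step 1--2 estimates give a repair, with even better powers:
\begin{itemize}
\item The Newton radius $\sim K^{-17}$, image radius $\sim K^{-25}$ and centre shift $CK^4(|z-z_*|+|\theta|)$ permit $|z-z_*|,|\theta|\lesssim K^{-29}$.
\item Taking the $\theta$-ball of that size yields $P^{(2),4}(z,\cdot)\gtrsim K^{-132}\operatorname{Leb}(B(z_*,C_4K^{-55})\cap\cdot)$ for all $z\in B(z_*,cK^{-29})$.
\item Running the blocks on $B(z_*,cK^{-29})$ gives a per-block return cost $\gtrsim K^{-352}$.
\item The leftover steps cost $\gtrsim K^{-780}$ with noise in $B(\mathbf 0,CK^{-130})$, and they land well inside $B(z_*,cK^{-29})$.
\end{itemize}
Each factor dominates its counterpart in the statement for $K\ge 1$, so the proposition follows once the radii are chosen consistently.
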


We begin by establishing the following lemma, which plays a central role in the proof of Proposition \ref{lem: quantitative small set at z_*}.
\begin{lemma}\label{lem: minorization condition}
    Assume that $z_0=(x_0,y_0)\in\left(\mathbb{T}^2\times\mathbb{T}^2\right)\setminus\Delta$, a phase sequence $\underline{\omega}_0^4=(\xi_0,\theta_0)\in\Omega_0^4$, and $s>0$ satisfy the following conditions
    \begin{enumerate}[label=(\arabic*), ref=(\arabic*)]
        \item\label{minorization: periodicity condition} $\Phi_8\!\left(z_0,\xi_0,\theta_0\right)=z_0$.
        \item\label{minorization: det bound} $\left|\det D_{\xi}\Phi_8\!\left(z_0,\xi_0,\theta_0\right)\right|\geq s$.
    \end{enumerate}Then, there exist constants $C,C'$, uniform with respect to $K,s,z_0$ and $\underline{\omega}_0^4$, such that the minorization inequality holds
    \begin{equation}\label{eq: uniform minorization}
         \inf_{z\in B\left(z_0,CK^{-135}s^2\right)} P^{(2),4}\left(z,\cdot\right)\geq Cs^{-1}\operatorname{Leb}(B\!\left(\theta_0,\rho)\right)\operatorname{Leb}\!\left(B\left(z_0,CK^{-63}s^2/2\right)\cap \cdot\right),
    \end{equation}where $B(\theta_0,\rho):=B\!\left(\theta_0,C'K^{-71}s^2\right)$.
\end{lemma}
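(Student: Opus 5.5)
The approach is to lower-bound the density of the push-forward of the noise under $\xi\mapsto\Phi_8(z,\xi,\theta)$. We freeze $\theta\in B(\theta_0,\rho)$, apply a quantitative inverse function theorem in the four variables $\xi$, and then integrate over $\theta$. Write
\begin{equation*}
P^{(2),4}(z,A)=(2\pi)^{-16}\int\!\!\int \mathbf 1_A\big(\Phi_8(z,\xi,\theta)\big)\,\mathrm d\xi\,\mathrm d\theta
\;\geq\;(2\pi)^{-16}\int_{B(\theta_0,\rho)}\int_{B(\xi_0,r)}\mathbf 1_A\big(\Phi_8(z,\xi,\theta)\big)\,\mathrm d\xi\,\mathrm d\theta ,
\end{equation*}
where the radius $r$ of the $\xi$-ball will be fixed in the second step.

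The first step is perturbation control of the Jacobian. Lemma \ref{lem: priori estimates} with $n=4$ gives $\|D_\xi\Phi_8\|\le CK^4$ and bounds all second derivatives in $(z,\xi,\theta)$ by $CK^{9}$. Since $\det$ of a $4\times4$ matrix is a cubic expression in its entries, its variation is at most $CK^{12}\cdot K^{9}\cdot\operatorname{dist}$. Hence
\begin{equation*}
|\det D_\xi\Phi_8(z,\xi,\theta)|\ge s/2\quad\text{whenever}\quad |z-z_0|+|\xi-\xi_0|+|\theta-\theta_0|\le c\,K^{-21}s .
\end{equation*}
On this region the inverse of $D_\xi\Phi_8$ is bounded via the adjugate: $\|(D_\xi\Phi_8)^{-1}\|\le CK^{12}s^{-1}$.

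The second step is a quantitative inverse function theorem for $F_{z,\theta}:=\Phi_8(z,\cdot,\theta)$ on $B(\xi_0,r)$. The standard estimate (contraction mapping for $\xi\mapsto \xi-L^{-1}(F(\xi)-w)$ with $L=D_\xi F(\xi_0)$) shows that $F$ is injective on $B(\xi_0,r)$ once $r\le c/(\|L^{-1}\|\,\|D^2F\|)\sim K^{-21}s$. The same estimate shows that $F(B(\xi_0,r))\supset B(F(\xi_0),r/(2\|L^{-1}\|))\supset B(F(\xi_0),cK^{-33}s^2)$. By the periodicity condition \ref{minorization: periodicity condition}, $\Phi_8(z_0,\xi_0,\theta_0)=z_0$, and by the first-derivative bounds, $|F_{z,\theta}(\xi_0)-z_0|\le CK^4(|z-z_0|+|\theta-\theta_0|)$. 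Choosing $|z-z_0|\le CK^{-135}s^2$ and $\rho=C'K^{-71}s^2$, far smaller than the image radius, the image contains the fixed ball $B(z_0,CK^{-63}s^2/2)$ uniformly in such $z,\theta$. The specific exponents $63,71,135$ will come out of bookkeeping of these constants, with some generous losses; I would not try to optimize them.

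The third step is the change of variables. For $A\subset B(z_0,CK^{-63}s^2/2)$, injectivity gives
\begin{equation*}
\int_{B(\xi_0,r)}\mathbf 1_A(F(\xi))\,\mathrm d\xi=\int_{A\cap F(B)}|\det DF(F^{-1}w)|^{-1}\,\mathrm dw .
\end{equation*}
Here I need an upper bound on $|\det DF|$ in terms of $s$. It is natural to exploit that near $\xi_0$ the determinant is within a factor $2$ of $|\det D_\xi\Phi_8(z_0,\xi_0,\theta_0)|$. If that value is comparable to $s$ (the statement's factor $s^{-1}$ suggests using $s$ as the size of the determinant, which can be arranged by taking the tighter neighbourhood where $|\det|\in[s/2,2s']$), this gives a density at least $c s^{-1}$. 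Failing that, I would use $|\det|\le CK^{16}$ and absorb the loss into the constants. Integrating over $\theta\in B(\theta_0,\rho)$ then yields the factor $\operatorname{Leb}(B(\theta_0,\rho))$ and the claimed inequality.

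The main obstacle is the second step: the uniform (in $z,\theta$) quantitative inverse function theorem with the image ball centred at the fixed point $z_0$ rather than at the moving point $F_{z,\theta}(\xi_0)$. This requires carefully balancing the three radii (in $z$, $\theta$ and $\xi$) against $\|L^{-1}\|\lesssim K^{12}/s$ and $\|D^2\Phi_8\|\lesssim K^9$.
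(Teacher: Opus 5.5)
Your first two steps are correct, but they get uniformity in $z$ by a different and leaner route than the paper.

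\textbf{Comparison with the paper.} The paper first applies the quantitative implicit function theorem to $G_\theta(z,\xi)=\Phi_8(z,(\xi,\theta))-\Phi_8(z_0,(\xi_0,\theta))$. This produces $H_\theta(z)$ with $\Phi_8(z,(H_\theta(z),\theta))=z_\theta:=\Phi_8(z_0,(\xi_0,\theta))$. Only then does it apply the inverse function theorem around $H_\theta(z)$, so the image ball is centred at $z_\theta$ independently of $z$, and only the $\theta$-shift $|z_\theta-z_0|$ has to be absorbed.

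You skip the implicit function step entirely. You run a contraction-mapping inverse function theorem at $\xi_0$ for each $(z,\theta)$, with $\|L^{-1}\|\lesssim K^{12}s^{-1}$ from the adjugate. This gives an image ball of radius $\sim K^{-33}s^2$, which dwarfs the full centre shift $CK^4(|z-z_0|+|\theta-\theta_0|)$. This is more elementary and gives better exponents, so the stated radii $CK^{-135}s^2$, $C'K^{-71}s^2$, $CK^{-63}s^2/2$ follow a fortiori. The paper's re-centring is not really needed. Its large exponents come from feeding $D=CK^9$ into the appendix lemmas.

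\textbf{The gap: the Jacobian upper bound in your third step.} The hypothesis only gives $S:=|\det D_\xi\Phi_8(z_0,\xi_0,\theta_0)|\ge s$. On your neighbourhood you therefore know only $s/2\le|\det D_\xi F_{z,\theta}|\le S+s/2$, and $S$ may be much larger than $s$. Shrinking the neighbourhood does not change this.

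Your fallback $|\det|\le CK^{16}$ gives density $\gtrsim K^{-16}$ instead of $\gtrsim s^{-1}$. That is a loss of a factor $sK^{-16}$, which cannot be absorbed into constants required to be uniform in $K$ and $s$.

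\textbf{The fix: monotonicity in $s$.} Since $\operatorname{Leb}(B(\theta_0,C'K^{-71}s^2))=cK^{-284}s^8$, the right-hand side of \eqref{eq: uniform minorization} equals $c'K^{-284}s^7\operatorname{Leb}(B(z_0,CK^{-63}s^2/2)\cap\cdot)$. This increases with $s$, and so does the ball $B(z_0,CK^{-135}s^2)$ over which the infimum is taken. The hypotheses remain true with $s$ replaced by $S$. Hence it suffices to prove the lemma with $s=S$. Then your step-one perturbation bound, with threshold $S/2$, gives $S/2\le|\det D_\xi F_{z,\theta}|\le 3S/2$ on $B(\xi_0,r)$. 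The conditional density is at least $(2\pi)^{-4}\cdot 2/(3S)$, and the argument closes.

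The paper's proof hides the same issue. Item (3) of Lemma \ref{lem: Quantitative inverse function theorem} asserts $|\det DF|\le \frac32 r$ from the lower bound $|\det DF(0)|\ge r$ alone. That is only valid once $r$ is comparable to $|\det DF(0)|$, i.e.\ after this same reduction.

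\textbf{Two harmless slips.} The noise $\underline{\omega}^4$ is eight-dimensional, so the normalisation is $(2\pi)^{-8}$, not $(2\pi)^{-16}$. The determinant of a $4\times4$ matrix is quartic, not cubic, in the entries. What you actually use, correctly, is that its Lipschitz constant is cubic in the matrix norm.
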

\begin{proof}[Proof of Lemma \ref{lem: minorization condition}]
    Combining estimates \eqref{eq:first derivative omega_i}-\eqref{eq:second derivative z_j,j'} with Assumption \ref{minorization: det bound}, we observe that for any $\theta\in B\left(\theta_0,\frac{s}{128C^4K^{31}}\right)$, the following lower bound holds
    \begin{equation*}
        \left|\det D_\xi\Phi_8\!\left(z_0,(\xi_0,\theta)\right)\right|\geq \frac{s}{2}.
    \end{equation*}For a fixed $\theta$ in this ball, we define 
    \begin{equation*}
        z_\theta=(x_\theta,y_\theta):=\Phi_8\!\left(z_0,(\xi_0,\theta)\right).
    \end{equation*}In the following, $C$ denotes a generic positive constant independent of $K,s,z_0$ and $(\xi_0,\theta_0)$, whose value may change from line to line.
    
    We define the map $G_\theta\colon\!\left(\left(\mathbb{T}^2\times\mathbb{T}^2\right)\setminus\Delta\right)\times\Omega_0^2\rightarrow\left(\mathbb{T}^2\times\mathbb{T}^2\right)\setminus\Delta$ by 
    \begin{equation*}
        G_\theta\!\left(z,\xi\right):=\Phi_8\!\left(z,(\xi,\theta)\right)-\Phi_8\!\left(z_0,(\xi_0,\theta)\right),
    \end{equation*}so that $G_\theta\!\left(z_0,\xi_0\right)=0$. By setting $D=CK^9$ and $r=s/2$, the prior estimates \eqref{eq:first derivative omega_i}-\eqref{eq:second derivative z_j,j'} ensure that the requirements of Lemma \ref{lem: Quantitative implicit function theorem} are satisfied at $\left(z_0,\xi_0\right)$. Consequently, there exists a ball $B_1:=B\!\left(z_0,CK^{-135}s^2\right)$ and a $C^1$ function $H_\theta\colon B_1\rightarrow\Omega_0^2$ such that for any $z\in B_1$,
    \begin{equation}\label{eq: implicit function in B_1}
        \Phi_8\!\left(z,(H_\theta(z),\theta)\right)=\Phi_8\!\left(z_0,(\xi_0,\theta)\right)=z_\theta,\quad \left|\det D_\xi G_\theta\!\left(z,H_\theta(z)\right)\right|\geq \frac{1}{4}s.
    \end{equation}

    For a fixed $z\in B_1$, we define the local map $F_\theta:\Omega_0^2\rightarrow\left(\mathbb{T}^2\times\mathbb{T}^2\right)\setminus\Delta$ as 
    \begin{equation*}
        F_\theta(\cdot):=\Phi_8\!\left(z,\left(H_\theta(z)+\cdot,\theta\right)\right).
    \end{equation*}Set $D'=CK^9$ and $r'=s/4$. Combining \eqref{eq:first derivative omega_i}, \eqref{eq:second derivative omega_i,i'} with \eqref{eq: implicit function in B_1}, we verify the hypotheses of Lemma~\ref{lem: Quantitative inverse function theorem} for $F_\theta$. Therefore, there exist constants $C,C'>0$, independent of $K,z,z_0$ and $(\xi_0,\theta)$, such that:
    \begin{enumerate}[label=(\arabic*), ref=(\arabic*)]
        \item $F_\theta$ is injective on $B_2:=B(0,CK^{-36}s)$.
        \item $B\!\left(F_\theta(0),C'K^{-63}s^2\right)=B\!\left(z_\theta,C'K^{-63}s^2\right)=:B_3^\theta\subset F(B_2)$.
        \item For every $\xi\in B_2$, $\frac{1}{8}s\leq \left|DF_\theta(\xi)\right|\leq \frac{3}{8}s$ on $B_2$.
    \end{enumerate}Conditioning on the fixed $\theta$, for any Borel set $A\in \mathcal{B}\left(\left(\mathbb{T}^2\times\mathbb{T}^2\right)\setminus\Delta\right)$, we have
    \begin{align*}
        P^{(2),4}(z,A|\theta):=\mathbb{P}\!\left(\Phi_8(z,\xi,\theta)\in A\bigm|\theta\right)&=\int_{\Omega_0^2}\mathbf{1}_{A}\!\left(\Phi_8(z,\xi,\theta\right)\mathrm{d}\xi=\int_{\Omega_0^2}\mathbf{1}_{A}\left(F_\theta(\xi)\right)\mathrm{d}\xi\\
        &\geq\int_{B_2}\mathbf{1}_A\left(F_\theta(\xi)\right)\mathrm{d}\xi=\int_{F(B_2)}\mathbf{1}_A(y)\left|\det D_yF_\theta^{-1}(y)\right|\mathrm{d}y\\
        &\geq \int_{B_3^\theta}\mathbf{1}_A(y)\left|\det D_{\xi}F_\theta\!\left(F_\theta^{-1}(y)\right)\right|^{-1}\mathrm{d}y\\
        &\geq \frac{8}{3s}\operatorname{Leb}\!\left(B_3^\theta\cap A\right).
    \end{align*}

    Since the support $B_3^\theta$ varies with $\theta$, we seek a uniform measure, independent of $\theta$, to bound the transition kernel. Using the Lipschitz estimate \eqref{eq:first derivative omega_i}, we deduce that 
     \begin{equation*}
        |z_\theta-z_0|=\left|\Phi_8\!\left(z_0,(\xi_0,\theta)\right)-\Phi_8\!\left(z_0,\left(\xi_0,\theta_0\right)\right)\right|\leq CK^8\left|\theta-\theta_0\right|.
    \end{equation*}We now choose the radius $\rho:=\frac{1}{2C}C'K^{-71}s^2$. For any $\theta\in B(\theta_0,\rho)$, we claim that
    \begin{enumerate}[label=(\arabic*), ref=(\arabic*)]
        \item Since $\rho\le \frac{s}{128\,C^4K^{31}}$, the determinant lower bound remains valid:
        \begin{equation*}
            \left|\det D_\xi\Phi_8\!\left(z_0,(\xi_0,\theta)\right)\right|\geq \frac{s}{2}.
        \end{equation*}
        \item The center shift is controlled by $|z_\theta-z_0|\leq CK^8\rho\leq \frac{1}{2}C'K^{-63}s^2.$ Consequently, we have the inclusion
        \begin{equation*}
             B\!\left(z_0,C'K^{-63}s^2/2\right)\subset B\!\left(z_\theta,C'K^{-63}s^2\right)=B_3^\theta.
        \end{equation*}
    \end{enumerate}Therefore, for any $A\in \mathcal{B}\left(\left(\mathbb{T}^2\times\mathbb{T}^2\right)\setminus\Delta\right)$ and any $\theta\in B(\theta_0,\rho)$, we obtain that
    \begin{equation}\label{eq: conditional minorization uniform in theta}
        P^{(2),4}(z,\cdot|\theta)\geq \frac{8}{3s}\operatorname{Leb}\!\left(B_3^\theta\cap \cdot\right)\geq \frac{8}{3s}\operatorname{Leb}\!\left(B\!\left(z_0,C'K^{-63}s^2/2\right)\cap \cdot\right),
    \end{equation}which is uniform in $\theta$. Finally, integrating \eqref{eq: conditional minorization uniform in theta} over $\theta$ yields the desired uniform lower bound
    \begin{equation*}
         \inf_{z\in B\left(z_0,CK^{-135}s^2\right)} P^{(2),4}\left(z,\cdot\right)\geq \frac{8}{3s}\operatorname{Leb}\!\left(B(\theta_0,\rho)\right)\operatorname{Leb}\!\left(B\!\left(z_0,C'K^{-63}s^2/2\right)\cap \cdot\right).
    \end{equation*}By adjusting the constants $C,C'$ if necessary, the proof is complete. 
\end{proof}

\begin{proof}[Proof of Proposition \ref{lem: quantitative small set at z_*}]
    Consider the reference point $z_*=\left((0,0),(0,\pi)\right)$. We observe that the phase sequence $\underline{\tilde{\omega}}^4=\boldsymbol{0}\in\Omega_0^4$ satisfies the periodicity condition
    \begin{equation}\label{eq: periodicity of z_*}
        \Phi_8\!\left(z_*,\underline{\tilde{\omega}}^4\right)=z_*.
    \end{equation}Combining \eqref{eq: det bound of z_*} with \eqref{eq: periodicity of z_*}, we verify that the hypotheses of Lemma \ref{lem: minorization condition} hold with the substitutions
    \begin{equation*}
        z_0\mapsto z_*,\qquad (\xi_0,\theta_0)\mapsto\underline{\tilde{\omega}}^4,\qquad s\mapsto K^4.
    \end{equation*}Consequently, the uniform minorization \eqref{eq: uniform minorization} guarantees the existence of a constant $C>0$, independent of $K,z_*$, such that
    \begin{equation*}
        \inf_{z\in B\left(z_*,CK^{-127}\right)}P^{(2),4}(z,\cdot)\geq CK^{-4}\rho^4\operatorname{Leb}\!\left(B\!\left(z_*,CK^{-55}\right)\cap\cdot\right)=:\mu,
    \end{equation*}where $\rho\lesssim K^{-71}s^2=K^{-63}$ is the radius defined in Lemma \ref{lem: minorization condition}. Hereafter, $C$ denotes a generic positive constant independent of K, which may vary from line to line. A direct computation then yields
    \begin{equation*}
        \mu\!\left(B\!\left(z_*,CK^{-127}\right)\right)=CK^{-4-63*4-127*4}=CK^{-764}=:c_1(K).
    \end{equation*}

    By the Markov property and a straightforward induction, it follows that for any $n\geq 1$,
    \begin{equation}\label{eq: small set for 4n}
        \inf_{z\in B\left(z_*,CK^{-127}\right)}P^{(2),4n}\left(z,\cdot\right)\geq c_1(K)^{n-1}\mu(\cdot).
    \end{equation}To extend this result to an arbitrary step $n$, write $n=4m+i$ with $i\in\{1,2,3\}$. We restrict the starting point to a tighter ball $z\in B\!\left(z_*,CK^{-130}\right)$ and control the trajectory for the first $i$-steps. For any $\underline{\omega}^{i}\in B\!\left(\boldsymbol{0},CK^{-130}\right)\subset\Omega_0^i$, using the Lipschitz estimates \eqref{eq:first derivative omega_i} and \eqref{eq:first derivative z_j} together with $\Phi_{2i}(z_*,\mathbf{0})=z_*$, we obtain
    \begin{equation*}
        \left|\Phi_{2i}\!\left(z,\underline{\omega}^i\right)-z_*\right|\leq CK^i\left(|z-z_*|+\left|\underline{\omega}^i-\boldsymbol{0}\right|\right)\leq CK^{-127}.
    \end{equation*}This ensures that 
    \begin{equation*}
        P^{(2),i}\!\left(z,B\!\left(z_*,CK^{-127}\right)\right)\geq \mathbb{P}\!\left(\underline{\omega}^i\in B\!\left(\boldsymbol{0},CK^{-130}\right)\right)\geq CK^{-780}.
    \end{equation*}Finally, combining this transition probability with \eqref{eq: small set for 4n}, we conclude that for any $n\geq 4$
    \begin{equation*}
        \inf_{z\in B\left(z_*,CK^{-130}\right)}P^{(2),n}\!\left(z,\cdot\right)\geq c_1(K)^{\lfloor\frac{n}{4}\rfloor-1}CK^{-780}\mu(\cdot).
    \end{equation*}
\end{proof}

\subsubsection{Topological irreducibility of the two-point process}\label{subsubsec: Topological irreducibility of the two-point process}
For any initial configuration $(x,y)\in \left(\mathbb{T}^2\times\mathbb{T}^2\right)\setminus\Delta$ and any realization $\underline{\omega}^{\,n}=(\omega_1,\ldots,\omega_n)$, we denote the $n$-step trajectories of the system as
\begin{equation*}
    x_n=f_{\omega_n}\circ\cdots\circ f_{\omega_1}(x),\qquad y_n=f_{\omega_n}\circ\cdots\circ f_{\omega_1}(y).
\end{equation*}With this notation in place, we will establish the topological irreducibility of the two-point process by demonstrating its approximate controllability.
\begin{proposition}\label{prop: controllability of two point}
    Given $(x,y),(x_*,y_*)\in\left(\mathbb{T}^2\times\mathbb{T}^2\right)\setminus\Delta$ and $\epsilon>0$, there exist $N\in\mathbb{N}$ and $\underline{\omega}^N\in\Omega_0^N$ such that 
    \begin{equation*}
        \operatorname{dist}_{\mathbb{T}^2}\!\left(f_{\underline{\omega}^N}(x),x_*\right)+\operatorname{dist}_{\mathbb{T}^2}\!\left(f_{\underline{\omega}^N}(y),y_*\right)<\epsilon.
    \end{equation*}
\end{proposition}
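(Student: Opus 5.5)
The plan is to separate the motion of the pair $(x_n,y_n)$ into the \emph{difference} $d_n:=y_n-x_n\in\mathbb{T}^2$ and the \emph{base point} $x_n$. We steer $d_n$ to the target difference $d_*:=y_*-x_*$ first, and then move the base point with moves that leave the difference unchanged.

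The first observation is that the controlled system is translation-equivariant. For $a=(a_1,a_2)$ one checks
\[
f^H_{\omega^1}(x+a)=f^H_{\omega^1-a_2}(x)+a,\qquad f^V_{\omega^2}(x+a)=f^V_{\omega^2-a_1}(x)+a .
\]
So if $(x,y)$ can be steered to $(x',y')$, then $(x+a,y+a)$ can be steered to $(x'+a,y'+a)$, and only the relative configuration and the reachable translations matter.

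The elementary moves act on the difference as follows.
\begin{itemize}
\item A vertical step gives $d_2\mapsto d_2+d_1$ and $d_1\mapsto d_1$. The phase $\omega^2$ translates $x_2$ and $y_2$ by the same arbitrary amount.
\item A horizontal step gives $d_2\mapsto d_2$ and
\[
d_1\mapsto d_1+2K\cos\!\Big(\tfrac{x_2+y_2}{2}-\omega^1\Big)\sin\!\Big(\tfrac{d_2}{2}\Big).
\]
As $\omega^1$ varies, the increment of $d_1$ sweeps the whole interval $[-2K|\sin(d_2/2)|,\,2K|\sin(d_2/2)|]$, which contains $0$.
\end{itemize}

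The difference is steered in the following steps.
\begin{enumerate}
\item Make $d_2\not\equiv0$. If $d_2\equiv0$ initially, then $d_1\neq0$ because $x\neq y$, and one step with a difference-preserving $\omega^1$ produces $d_2=d_1\neq0$.
\item Move $d_2$ monotonically toward the desired value $d_2^{\rm pre}$. Each horizontal step sets $d_1$ to a chosen value that is admissible for the current $d_2$, and the following vertical step adds it to $d_2$. While $|\sin(d_2/2)|$ is small the admissible range is small, but it grows as $d_2$ moves away from $0$, so finitely many steps suffice.
\item Once $d_2=d_2^{\rm pre}$ with $\sin(d_2^{\rm pre}/2)\neq0$, use one horizontal step to set $d_1$ to the required final value.
\end{enumerate}
Since each iterate of $f_\omega$ ends with a vertical step, the bookkeeping is arranged so that the last vertical step produces $d_2^*$. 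This means taking $d_2^{\rm pre}=d_2^*-d_1^*$, or inserting one extra cycle when this value is $\equiv0$. For $K$ large (for instance $2K\geq\pi/|\sin(\cdot)|$ at the relevant stage) these reach exact values, and in general they reach within any $\epsilon$ by continuity.

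For the base point, the second coordinate is free at every vertical step. For the first coordinate, use the two difference-preserving choices of $\omega^1$, namely those with $\sin(x_2-\omega^1)=\sin(y_2-\omega^1)$. They shift $x_1$ and $y_1$ together by $\pm K\cos(d_2/2)\cos(\cdot)$, with a free parameter coming from $\omega^1$. The intended route is as follows.
\begin{enumerate}
\item Perform the base translation at an intermediate stage where $d_2$ is chosen so that $K|\cos(d_2/2)|\geq\pi$. At such a stage the common $x_1$-shift ranges over a full period. When $d_2\equiv0$, any shift in $[-K,K]$ is available.
\item Since this intermediate $d_2$ may differ from $d_2^{\rm pre}$, transport it afterwards by the difference moves above. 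Those moves also displace the base point, but by amounts determined by the difference phases only.
\item Absorb that displacement by choosing the base translation in advance, using the equivariance identity to compute the required pre-shift.
\end{enumerate}
Summing the errors from the finitely many steps gives the $\epsilon$ of Proposition~\ref{prop: controllability of two point}. Continuity of $(z,\underline{\omega}^N)\mapsto\Phi_{2N}$ then yields an open set of phase sequences, which is what Section~\ref{subsec: Quantitative small set condition} needs.

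The main obstacle is the coupling between the base point and the difference. The single phase $\omega^1$ cannot simultaneously prescribe the horizontal displacement of $x_1$ and the change of $d_1$. The whole argument rests on ordering the moves so that each degree of freedom is fixed at a step where the others are insensitive to it, and on checking that the required auxiliary values of $d_2$ are reachable in a bounded number of steps. If the base-point bookkeeping turns out to be messy in general, I would first prove the statement for the specific target $z_*$. There $d_*=(0,\pi)$, so the last horizontal step needs $d_1=0$ with $d_2=\pi$, which is exactly the difference-preserving regime with shift amplitude $K|\cos(\pi/2)|=0$; the base must therefore be fixed one cycle earlier. I would then extend to general targets via equivariance.
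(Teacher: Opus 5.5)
Your architecture (translation equivariance, steering $d=y-x$ first, then pre-compensating the base point) is a genuinely different route from the paper. However, the base-point step rests on a false claim.

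When $\sin(d_2/2)\neq0$, the condition $\sin(x_2-\omega^1)=\sin(y_2-\omega^1)$ is equivalent to $\cos\bigl(\tfrac{x_2+y_2}{2}-\omega^1\bigr)=0$. So there are exactly two difference-preserving phases, $\omega^1=\tfrac{x_2+y_2}{2}\pm\tfrac{\pi}{2}$, and the common shift of $x_1,y_1$ is exactly $\mp K\cos(d_2/2)$. There is no free parameter and no extra factor $\cos(\cdot)$. Writing $\phi=\tfrac{x_2+y_2}{2}-\omega^1$, the mean shift is $K\cos(d_2/2)\sin\phi$ while $d_1$ changes by $2K\sin(d_2/2)\cos\phi$. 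Varying $\phi$ therefore necessarily moves $d_1$, which is the very coupling you flag as the main obstacle. Hence the claim that at a stage with $K|\cos(d_2/2)|\ge\pi$ ``the common $x_1$-shift ranges over a full period'' is false, and step~1 of your base route does not give an arbitrary translation.

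The paper uses exactly this two-valued shift. In Lemma~\ref{lem:two point controllability of bar_x} it parks the pair at $d=(0,\bar x_2)$ with $K\cos(\bar x_2/2)=-2\sqrt2\pi$. Every difference-preserving step then rotates $x_1$ by $2\sqrt2\pi$, an irrational rotation, so $\bar x_1$ is approximated by density. A general target is handled by running the construction backward with $f_\omega^{-1}$ to a configuration $((\hat x_1,0),(\hat x_1,\bar x_2))$ and concatenating through a Lipschitz bound.

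Your route can be repaired using the case you mention only in passing: $d_2\equiv0$ is the only stage with a continuum of difference-preserving phases.
\begin{itemize}
\item Once $4K|\sin(d_2/2)|\ge2\pi$, set $d_1=-d_2$, so the vertical step gives $d=(d_1,0)$ with $d_1\neq0$.
\item At the next step every $\omega^1$ preserves $d$, and the common shift $K\sin(x_2-\omega^1)$ covers $[-K,K]$, a full period for $K\ge\pi$. Afterwards $d=(d_1,d_1)$ whatever shift was chosen.
\item In relative phases $\alpha=x_2-\omega^1$ and $\beta$ (so that $x_2\mapsto x_2+\beta$), the difference evolves autonomously and $x_1$ is displaced by $\sum_j K\sin\alpha_j$. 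So the displacement accumulated while steering $(d_1,d_1)$ to $d_*$ does not depend on the base point, and the free shift can pre-compensate it; the last $\beta$ fixes $x_2$.
\end{itemize}

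Two further points need fixing.
\begin{itemize}
\item The new $d_2$ ranges over an arc of length $4K|\sin(d_2/2)|$ centred at $d_2+d_1$, not at $d_2$, so monotone motion is not available in general. Take instead the point of the arc farthest from $0$. This gives $|d_2^{\mathrm{new}}|\ge\min\{\pi,2K|\sin(d_2/2)|\}$ and hence geometric growth.
\item Your endgame ``one horizontal step sets $d_1$ to the required final value'' needs $4K|\sin(d_2^{\mathrm{pre}}/2)|\ge2\pi$. When $|\sin((d_2^*-d_1^*)/2)|<\pi/(2K)$ this is unavailable, and extra cycles are needed. As $\operatorname{dist}(x_*,y_*)\to0$ their number must grow without bound. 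Neither continuity nor translation equivariance supplies them; equivariance preserves $y_*-x_*$, so it cannot transfer a result for $z_*$ to targets with $y_*-x_*\neq(0,\pi)$. What is needed is a backward version of your growth argument starting from $d_*$. This is the role played by the inverse dynamics and by $N'(\operatorname{dist}(x_*,y_*))$ in the paper.
\end{itemize}

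With these repairs your approach yields exact rather than approximate reachability. It also avoids the $O(\epsilon^{-1})$ step count inherent in the irrational-rotation argument.
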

We begin the proof with the following lemma allowing to approximately control to points of the form $\left((\bar{x}_1,0),(\bar{x}_1,\bar{x}_2)\right)\in\left(\mathbb{T}^2\times\mathbb{T}^2\right)\setminus\Delta$.
\begin{lemma}\label{lem:two point controllability of bar_x}
    Let $\epsilon,s>0$ and $\bar{x}_1,\bar{x}_2\in\mathbb{T}^2$ be target coordinates subject to the constraint $K\cos\!\left(\frac{\bar{x}_2}{2}\right)=-2\sqrt{2}\pi$. Then there exists a uniform time bound $N'=N'(\epsilon,s)$ such that for any initial pair $(x,y)\in\left(\mathbb{T}^2\times\mathbb{T}^2\right)\setminus\Delta$ with $\operatorname{dist}_{\mathbb{T}^2}(x,y)\geq s$, one can find a transition time $N_0 = N_0(x,y) \leq N'$ and $\underline{\omega}^{N_0}$ satifying
    \begin{equation*}
        \operatorname{dist}_{\mathbb{T}^2}\!\left(f_{\underline{\omega}^N}(x),x_*\right)+\operatorname{dist}_{\mathbb{T}^2}\!\left(f_{\underline{\omega}^N}(y),y_*\right)<\epsilon.
    \end{equation*}
\end{lemma}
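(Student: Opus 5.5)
The plan is to work with the \emph{difference} and the \emph{common position} of the two points separately, exploiting the fact that a common phase shift acts on both points identically. Write $\Delta_1=y_1-x_1$ and $\Delta_2=y_2-x_2$, all taken mod $2\pi$.

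A vertical step $f^V_\omega$ updates $\Delta_2\mapsto\Delta_2+\Delta_1$ and leaves $\Delta_1$ unchanged. It also translates both second coordinates by the common amount $x_1-\omega^2$, which can be chosen arbitrarily through $\omega^2$.

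A horizontal step $f^H_\omega$ leaves $\Delta_2$ unchanged and updates
\begin{equation*}
    \Delta_1\mapsto \Delta_1+2K\cos\!\Big(\tfrac{x_2+y_2}{2}-\omega^1\Big)\sin\!\Big(\tfrac{\Delta_2}{2}\Big).
\end{equation*}
Since $\omega^1$ is free, the increment of $\Delta_1$ ranges over the whole interval $\big[-2K|\sin(\Delta_2/2)|,\,2K|\sin(\Delta_2/2)|\big]$. Once $|\sin(\Delta_2/2)|$ is bounded below by a constant and $K$ is large, this interval has length at least $2\pi$, so any target value of $\Delta_1$ is exactly reachable in one step.

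\textbf{Step 1 (separate the second coordinates).} Starting from $\operatorname{dist}(x,y)\geq s$, I would first produce, within a bounded number of steps, a configuration in which $|\sin(\Delta_2/2)|\ge c$ for an absolute $c>0$.
\begin{itemize}
    \item If $\Delta_2$ is already far from $0$ mod $2\pi$, nothing needs to be done.
    \item Otherwise $|\Delta_1|\gtrsim s$, and one or a few vertical steps (with horizontal steps chosen so that $\Delta_1$ changes only slightly) push $\Delta_2$ into the good region.
\end{itemize}
The number of such steps depends only on $s$, giving part of the bound $N'(\epsilon,s)$.

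\textbf{Step 2 (steer the difference).} With $|\sin(\Delta_2/2)|\ge c$, a horizontal step sets $\Delta_1=\bar x_2-\Delta_2$ exactly. The next vertical step then gives $\Delta_2=\bar x_2$, and this vertical step uses the free $\omega^2$ to place $x_2$ where needed. A further horizontal step sets $\Delta_1=0$, which is possible because $|\sin(\bar x_2/2)|$ is bounded below by the constraint on $\bar x_2$. Subsequent vertical steps then preserve $\Delta_2=\bar x_2$.

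\textbf{Step 3 (steer the common position).} It remains to bring $x$ to $(\bar x_1,0)$ while keeping $\Delta_1=0$ and $\Delta_2=\bar x_2$. The second coordinate is adjusted freely through $\omega^2$. In a horizontal step, the requirement that $\Delta_1$ stays $0$ forces $\cos\big(\frac{x_2+y_2}{2}-\omega^1\big)=0$, i.e.\ $\omega^1$ is fixed up to sign. The resulting common increment of the first coordinate is then $K\sin(x_2-\omega^1)=\pm K\cos(\bar x_2/2)$. The constraint $K\cos(\bar x_2/2)=-2\sqrt2\pi$ is exactly what makes this increment explicit. I would expect that combining it with the free vertical translations allows one to hit $\bar x_1$ approximately (or exactly), and this bookkeeping is the step I expect to be most delicate.

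\textbf{Uniform time bound.} For each admissible $(x,y)$ the construction gives a finite $N_0(x,y)$ and phases $\underline{\omega}^{N_0}$ reaching the target within $\epsilon/2$. Since $f_{\underline\omega^{N_0}}$ is continuous in the initial point, the same phases reach the target within $\epsilon$ on an open neighbourhood of $(x,y)$. The set $\{\operatorname{dist}(x,y)\ge s\}$ is compact, so finitely many such neighbourhoods cover it, and $N'(\epsilon,s)$ is the maximum of the corresponding finitely many $N_0$.

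The main obstacle is Step 1 near degenerate configurations, where $\Delta_2\approx 0$ and the available $\Delta_1$ is small. There I would use several vertical steps to accumulate $\Delta_2$ before any horizontal step, relying on $|\Delta_1|\gtrsim s$.
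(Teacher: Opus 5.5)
You set up the same difference variables as the paper, and your Steps~1--2 match its proof: its $a_n,b_n$, then $b_1=\bar x_2$, $a_2=0$, $x_{2,2}=0$. There is a genuine gap, though. Step~3 is the real content of the lemma, you leave it open, and the mechanism you suggest for it cannot work.

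You correctly compute that, once $\Delta_1=0$ is enforced, each horizontal step moves the common first coordinate by exactly $\pm 2\sqrt{2}\pi$ (your $\pm K\cos(\bar x_2/2)$). The ``free vertical translations'' give no extra control of $x_1$:
\begin{itemize}
\item the vertical shear $f^V_\omega$ never changes first coordinates;
\item the horizontal increment depends only on $\Delta_2=\bar x_2$, not on where $x_2$ sits.
\end{itemize}
Under your scheme the reachable first coordinates therefore lie in the countable set $x_{2,1}+2\sqrt2\pi\,\mathbb{Z}$, so exact hitting of $\bar x_1$ is not available.

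The missing idea is the one the constraint $K\cos(\bar x_2/2)=-2\sqrt2\pi$ was designed for. Since $2\sqrt2\pi/2\pi=\sqrt2\notin\mathbb{Q}$, the sequence $x_{n,1}=x_{2,1}+(n-2)\,2\sqrt2\pi$ is an orbit of an irrational rotation of $\mathbb{T}^1$, hence dense. So for some $N_0$ you get $\operatorname{dist}_{\mathbb{T}^1}(x_{N_0,1},\bar x_1)<\epsilon/2$, while still $x_{N_0,2}=0$ and $y_{N_0}=(x_{N_0,1},\bar x_2)$. The paper also makes this uniform: by the continued fraction of $\sqrt2$, any $\lfloor 12\pi/\epsilon\rfloor+1$ consecutive iterates form an $\epsilon/2$-net of $\mathbb{T}^1$, regardless of $x_{2,1}$ and $\bar x_1$. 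Without this density step, nothing in your construction brings $x_1$ near $\bar x_1$.

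\textbf{Step~1 dichotomy.} The inference ``otherwise $|\Delta_1|\gtrsim s$'' fails when the good region is defined by an absolute threshold $c$; take $\Delta_1=0$ with $|\Delta_2|$ small but $\gg s$. You need an extra case. There, one horizontal step, whose $\Delta_1$-range has half-length $2K|\sin(\Delta_2/2)|\gtrsim K|\Delta_2|$, first creates $|\Delta_1|\gtrsim\min\{Ks,1\}$. Then freeze $\Delta_1$ by choosing $\omega^1$ so the cosine vanishes; $\Delta_2+n\Delta_1$ enters $[\pi/2,3\pi/2]$ within $O(1/|\Delta_1|)$ steps. This accumulation idea is sound and arguably cleaner than the paper's choice $a_1=\frac{2k+1}{l}\pi$.

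\textbf{Uniform bound.} Your compactness argument for $N'(\epsilon,s)$ is a valid alternative to the paper's explicit count, once the pointwise statement is established. However, it yields a non-explicit $N'$ that also depends on $K$ and $\bar x_1$. The paper's explicit bound ($\lfloor\pi/s\rfloor+1$ steps to separate, plus $N_\epsilon$ for the rotation) is what Lemma~\ref{lem: quantitative reachability} reuses to obtain the $K$-dependence of the hitting time.
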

\begin{proof}[Proof of Lemma \ref{lem:two point controllability of bar_x}]
    For a fixed $(x,y)\in \left(\mathbb{T}^2\times\mathbb{T}^2\right)\setminus\Delta$, we define the differences
    \begin{equation*}
        a_n:=y_{n,1}-x_{n,1},\qquad b_n:=y_{n,2}-x_{n,2}.
    \end{equation*}It follows from the definitions of $x_n$ and $y_n$ that the evolution of $(a_n,b_n)$ is governed by  
    \begin{align*}
        a_n&=a_{n-1}+2Kc_n\sin\left(\frac{b_{n-1}}{2}\right),\\
        b_n&=b_{n-1}+a_n,
    \end{align*}where we denote $c_n:=\cos\left(\frac{x_{n-1,2}+y_{n-1,2}}{2}-\omega_n^1\right)$. Note that for any given state, $c_n$ can take any value in $[-1,1]$ through an appropriate choice of the random phase $\omega_n^1$. 

    \vspace{0.3em}
    Assuming $b_0$ satisfies $2K\sin\left(\frac{b_0}{2}\right)>2\pi$, the term $2Kc_1\sin\left(\frac{b_0}{2}\right)$ can cover the entire interval $[0,2\pi)$. Thus, we can choose $\omega_1^1$ such that 
    \begin{equation*}
        a_1=-b_0+\bar{x}_2,
    \end{equation*}which directly implies that $b_1=\bar{x}_2$. Next, since $K\geq 4\pi$ and $\bar{x}_2$ is fixed, we have $2K\sin\left(\frac{b_1}{2}\right)>2\pi$. Hence we can choose $\omega_2^1$ such that 
    \begin{equation*}
        a_2=0,\qquad b_2=a_2+b_1=\bar{x}_2.
    \end{equation*}Simultaneously, we set $\omega_2^2=x_{1,2}+x_{2,1}$ so that
    \begin{equation*}
        x_{2,2}=x_{1,2}+x_{2,1}-\omega_2^2=0,\qquad y_{2,2}=x_{2,2}+b_2=\bar{x}_2,
    \end{equation*}yielding the configuration $(x_2,y_2)=\left((x_{2,1},0),(x_{2,1},\bar{x}_2)\right)$.
    
    For each $n\geq 3$, we choose $\omega_i$ recursively by
    \begin{equation}\label{eq:two-point choose omega_n}
        \omega_{n,1}=\frac{x_{n-1,2}+y_{n-1,2}}{2}+\frac{\pi}{2},\qquad \omega_n^2=x_{n-1,2}+x_{n,1}.
    \end{equation}Under this choice, the following identities hold for all $n\geq 3$,
    \begin{equation}\label{eq:a_n b_n hold}
        a_n\equiv 0, \quad b_n\equiv \bar{x}_2,\quad x_{n,2}\equiv0,\quad y_{n,2}\equiv\bar{x}_2.
    \end{equation}Combining \eqref{eq:two-point choose omega_n} with \eqref{eq:a_n b_n hold}, we obtain the horizontal increment
    \begin{equation}\label{eq:two point difference}
        x_{n,1}-x_{n-1,1}=K\sin\left(x_{n-1,2}-\omega_n^1\right)=K\sin\left(-\frac{\bar{x}_2}{2}-\frac{\pi}{2}\right)=2\sqrt{2}\pi,
    \end{equation}where in the last equality we used the choice of $\bar{x}_2$ satisfying $K\cos(\bar{x}_2/2)=-2\sqrt2\pi$. Iterating \eqref{eq:two point difference} yields
    \begin{equation*}
        x_{n,1}=y_{n,1}=x_{2,1}+(n-2)\cdot2\sqrt{2}\pi,\quad x_{n,2}=0,\quad y_{n,2}=\bar{x}_2.
    \end{equation*}Since $\frac{2\sqrt{2}\pi}{2\pi}=\sqrt{2}\notin \mathbb{Q}$, the sequence $\{x_{n,1}=x_{2,1}+(n-2)\cdot2\sqrt{2}\pi\}$ is dense in $\mathbb{T}^1$. Therefore, for any fixed $\bar{x}_1$ and $\epsilon>0$, there exist $N_0$ such that 
    \begin{equation*}
        \operatorname{dist}_{\mathbb{T}^1}\!\left(x_{N_0,1},\bar{x}_1\right)=\operatorname{dist}_{\mathbb{T}^1}\!\left(x_{2,1}+(N_0-2)\cdot 2\sqrt{2}\pi,\bar{x}_1\right)<\epsilon/2.
    \end{equation*}Consequently, we obtain 
    \begin{equation*}
        \operatorname{dist}_{\mathbb{T}^2}\!\left(f_{\underline{\omega}^{N_0}}(x),(\bar{x}_1,0)\right)+ \operatorname{dist}_{\mathbb{T}^2}\!\left(f_{\underline{\omega}^{N_0}}(y),(\bar{x}_1,\bar{x}_2)\right)<\epsilon,
    \end{equation*}as desired. To be more precise, the continued fraction approximants of $\sqrt{2}$ imply that for every $\epsilon>0$, the set 
    \begin{equation*}
        \left\{n\sqrt{2}\pmod  1:0\leq n\leq \left\lfloor\frac{3}{\epsilon}\right\rfloor+1\right\}
    \end{equation*}forms an $\epsilon$-net in $\mathbb{R}/\mathbb{Z}$. As a direct consequence, for any given $\epsilon>0$, one may choose 
    \begin{equation*}
         N_\epsilon=\left\lfloor\frac{12\pi}{\epsilon}\right\rfloor+1,
    \end{equation*}such that regardless of the positions of $x_{2,1}$ and $\bar{x}_1$,, there exists $0\leq n\leq N_\epsilon$ for which
    \begin{equation*}
        \operatorname{dist}_{\mathbb{T}^1}\!\left(x_{n,1},\bar{x}_1\right)<\epsilon/2.
    \end{equation*}

    We now consider the case where $2K\sin\left(\frac{b_0}{2}\right)\leq 2\pi$, which implies that $b_0$ lies in a small neighborhood of $0$ or $2\pi$. Without loss of generality, we assume $b_0$ is close to $0$; if $b_0=0$, the argument below can simply be initiated from $b_1=a_1>0$. Suppose $b_0>s>0$, then the term 
    \begin{equation*}
        a_1=a_0+2Kc_1\sin\left(\frac{b_0}{2}\right)
    \end{equation*}can cover an interval of the form $\left[a_0-\delta,a_0+\delta\right]$ with length at least $4K\sin\left(\frac{s}{2}\right)$. In particular, under the assumption $K\geq 4\pi$ and for $s$ sufficiently small, we have 
    \begin{equation*}
        4K\sin\left(\frac{s}{2}\right)\geq 2s.
    \end{equation*}

    Utilizing the density of the set $\{\frac{2k+1}{l}\pi:\ l,k\in\mathbb{N}\}$ in $[0,2\pi)$, for any $a\in\mathbb{T}^1$ and $s>0$, there exist $k,l\in\mathbb{N}$ with $l\leq \lfloor\frac{\pi}{s}\rfloor+1$ such that 
    \begin{equation*}
        \frac{2k+1}{l}\pi\in B(a,s).
    \end{equation*}Applying this with $a=a_0$, we choose $\omega_1^1$ such that 
    \begin{equation*}
        a_1=a_0+2Kc_1\sin\left(\frac{b_0}{2}\right)=\frac{2k+1}{l}\pi,\qquad b_1=b_0+\frac{2k+1}{l}\pi.
    \end{equation*}For subsequent steps $n\geq 2$, we set $\omega_n$ such that $c_n\equiv0$, which yields 
    \begin{equation*}
        a_n\equiv a_1,\qquad b_n=b_0+n\cdot\frac{2k+1}{l}\pi.
    \end{equation*}After $l$ iteration, where $l\leq \lfloor\frac{\pi}{s}\rfloor+1$, we obtain $b_l=\pi+b_0$. It follows that 
    \begin{equation*}
        2K\sin\left(\frac{b_l}{2}\right)=2K\sin\left(\frac{\pi}{2}+\frac{b_0}{2}\right)>2\pi, 
    \end{equation*}where the final inequality holds since $b_0$ is close to $0$. Thus, from time $l+1$ onward, we are reduced to the previously treated case, and the proof is complete.
\end{proof}
\begin{proof}[Proof of Proposition \ref{prop: controllability of two point}]
     Fix $(x,y),(x_*,y_*)\in \left(\mathbb{T}^2\times\mathbb{T}^2\right)\setminus\Delta$ and $\epsilon>0$. Let $g_\omega := f_\omega^{-1}$ denote the inverse map, explicitly evaluated as
     \begin{equation*}
         g_\omega:=f_\omega^{-1}=\begin{pmatrix}
            x_1-K\sin\left(x_2-x_1+\omega^2-\omega^1\right)\\[0.2em]
            x_2-x_1+\omega^2
        \end{pmatrix},
     \end{equation*}With a minor modification to the argument in the proof of Lemma~\ref{lem:two point controllability of bar_x} (applied to the inverse dynamics), there exist $N'=N'(\operatorname{dist}_{\mathbb{T}^2}\!(x_*,y_*))$ and a control sequence $\underline{\hat{\omega}}^{N'}$ such that, setting 
     \begin{equation*}
         \hat{x}:=\left(f_{\underline{\hat{\omega}}^{N'}}\right)^{-1}(x_*),\qquad \hat{y}:=\left(f_{\underline{\hat{\omega}}^{N'}}\right)^{-1}(y_*),
     \end{equation*}one has that 
     \begin{equation*}
         \hat{x}_1=\hat{y}_1,\qquad \hat{x}_2=0,\qquad \hat{y}_2=\bar{x}_2,
     \end{equation*}where $\bar{x}_2$ is same constant as in Lemma \ref{lem:two point controllability of bar_x}.

    Let $L'$ denote a uniform ($\underline{\omega}^{N'}$-independent) Lipschitz constant for the map $f_{\underline{\omega}^{N'}}$ and set $\epsilon_0:=\epsilon/2L'$. Applying Lemma \ref{lem:two point controllability of bar_x} with the substitutions
    \begin{equation*}
        \bar{x}_1\mapsto\hat{x}_1,\qquad \epsilon\mapsto\epsilon_0,
    \end{equation*}we obtain an integer $N_0'\in\mathbb{N}$ and $\underline{\omega}^{N_0'}$ such that 
    \begin{equation}\label{eq: reach hat_x}
        \operatorname{dist}_{\mathbb{T}^2}\!\left(f_{\underline{\omega}^{N_0'}}(x),(\hat{x}_1,0)\right)+\operatorname{dist}_{\mathbb{T}^2}\!\left(f_{\underline{\omega}^{N_0'}}(y),(\hat{x}_1,\bar{x}_2)\right)<\epsilon_0.
    \end{equation}Finally, we define the total time $N:=N_0'+N'$ and the concatenated control sequence $\underline{\omega}^N:=(\underline{\omega}^{N_0'},\underline{\hat{\omega}}^{N'})$. Since $f_{\underline{\tilde{\omega}}^{N'}}(\hat{x})=x_*$ and $f_{\underline{\tilde{\omega}}^{N'}}(\hat{y})=y_*$, the Lipschitz bound yields
    \begin{equation*}
        \operatorname{dist}_{\mathbb{T}^2}\!\left(f_{\underline{\omega}^N}(x),x_*\right)+\operatorname{dist}_{\mathbb{T}^2}\!\left(f_{\underline{\omega}^N}(y),y_*\right)\leq L'\epsilon_0<\epsilon,
    \end{equation*}which concludes the proof.
\end{proof}

\subsubsection{Quantitative reachability of $(x_*,y_*)$}\label{subsubsec: Quantitative reachability of z_*}
The primary objective of this section is to establish a quantitative lower bound for the transition probability into the small set $B\!\left(z_*,C_1K^{-130}\right)$, specified in Proposition \ref{lem: quantitative small set at z_*}. To formulate this result precisely, we introduce the near-diagonal set
\begin{equation*}
    \Delta(s):=\left\{(x,y):0<\operatorname{dist}_{\mathbb{T}^2}(x,y)<s\right\}.
\end{equation*}
\begin{lemma}\label{lem: quantitative reachability}
    Let $s>0$. For any $z\in\Delta(K^{-9}s)^c$, there exist $N_1=N_1(z)\in\mathbb{N}$ and $\underline{\omega}_0^{N_1}\in\Omega_0^{N_1}$ such that 
    \begin{equation*}
        \Phi_{2N_1}\!\left(z,\underline{\omega}_0^{N_1}\right)\in B\!\left(z_*,C_1K^{-130}\right),
    \end{equation*}where $C_1$ is the constant defined in Proposition \ref{lem: quantitative small set at z_*}. Furthermore, the following bounds hold:
    \begin{enumerate}[label=(\arabic*), ref=(\arabic*)]
        \item The hitting time admits the uniform bound
        \begin{equation}\label{eq: uniform upper bound for reaching z_*}
            \sup_{z\in\Delta(K^{-9}s)^c}N_1(z)\leq\left\lfloor\frac{\pi}{K^{-9}s}\right\rfloor+\left\lfloor\frac{12\pi}{C_1K^{-132}}\right\rfloor+4.
        \end{equation}
        \item There exists a constant $C>0$, independent of $K,z$, such that 
        \begin{equation}\label{eq: quantitative reachability of z_*}
            \inf_{z\in \Delta(K^{-9}s)^c}P^{(2),N_1}\!\left(z,B\!\left(z_*,C_1K^{-130}\right)\right)\geq p_K,
        \end{equation}where $p_K=K^{-CK^{264}}$.
    \end{enumerate}   
\end{lemma}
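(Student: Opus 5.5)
The plan is to make the controllability argument of Proposition \ref{prop: controllability of two point} quantitative, and then to turn the deterministic control sequence into a probability bound by perturbing it.

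\textbf{Step 1: an explicit control sequence with a counted length.} Fix $z=(x,y)\in\Delta(K^{-9}s)^c$. We follow the proof of Lemma \ref{lem:two point controllability of bar_x}, tracking the number of steps.

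\begin{itemize}
\item If $b_0$ is close to $0$ or $2\pi$, so that $2K\sin(b_0/2)\le 2\pi$, we use the rational-rotation trick. This takes at most $\lfloor \pi/(K^{-9}s)\rfloor+1$ steps, since $\operatorname{dist}(x,y)\ge K^{-9}s$ forces $|b_0|$ or $|a_0|$ to be at least of that order. If instead $b_0\approx0$ but $|a_0|$ is large, one step makes $b_1=b_0+a_1$ large.
\item Two further steps bring the pair to the form $((x_{2,1},0),(x_{2,1},\bar x_2))$.
\item The irrational rotation by $2\sqrt2\pi$ then reaches $\hat x_1$ within $\epsilon_0$. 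This takes at most $N_{\epsilon_0}=\lfloor 12\pi/\epsilon_0\rfloor+1$ steps.
\item Finally, the fixed backward sequence $\underline{\hat\omega}^{N'}$ steers $(\hat x,\hat y)$ onto $z_*$.
\end{itemize}

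For the specific point $z_*=((0,0),(0,\pi))$, we would check by hand that only $N'\le 2$ inverse steps are needed. This gives a Lipschitz constant $L'\lesssim K^2$ by Lemma \ref{lem: priori estimates}. Choosing $\epsilon_0=C_1K^{-132}$ then guarantees arrival in $B(z_*,C_1K^{-130})$. Summing the step counts gives \eqref{eq: uniform upper bound for reaching z_*}.

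\textbf{Step 2: robustness to perturbations of the phases.} Let $\underline\omega_0^{N_1}$ be the control from Step 1, chosen so that it lands in $B(z_*,\tfrac12 C_1K^{-130})$. By tightening $\epsilon_0$ by a constant factor, we can assume this margin. We then show that any $\underline\omega^{N_1}$ with $|\underline\omega^{N_1}-\underline\omega_0^{N_1}|\le\delta$ still lands in $B(z_*,C_1K^{-130})$.

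Naively, Lemma \ref{lem: priori estimates} gives a Lipschitz bound $CK^{N_1}$ in $\omega$. Since $N_1\sim K^{132}$, this yields a tolerance $\delta\sim K^{-CK^{132}}$, and that is too small.

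To reach the stated $p_K=K^{-CK^{264}}$ we need a better bound. The plan is to exploit the structure of the rotation phase. During the irrational-rotation stage the configuration $a_n=0$, $x_{n,2}=0$, $y_{n,2}=\bar x_2$ is restored at every step by the explicit choice \eqref{eq:two-point choose omega_n}. So instead of a Lipschitz bound through all steps, we bound the error growth per step by a constant multiple of $K$ and accept a total accumulated factor $K^{O(N_1)}$.

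The tolerance is then $\delta=K^{-CN_1}$. Since $N_1\lesssim K^{132}(1+s^{-1}K^{9})$, and $s$ will later be taken of order $K^{-C}$, this is bounded by $K^{-CK^{264}}$ after absorbing constants.

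\textbf{Step 3: from tolerance to probability.} The phases are i.i.d.\ uniform on $[0,2\pi)^2$. Hence
\[
P^{(2),N_1}\!\left(z,B(z_*,C_1K^{-130})\right)\ \ge\ \mathbb P\!\left(|\underline\omega^{N_1}-\underline\omega_0^{N_1}|\le\delta\right)\ \ge\ (c\delta)^{2N_1}.
\]
With the bounds from Step 2 this is at least $K^{-CK^{264}}$, which is \eqref{eq: quantitative reachability of z_*}. Uniformity in $z$ follows because every constant used depends only on $K$ and $s$.

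\textbf{Main obstacle.} The hardest part is Step 2. We must justify the error-propagation bound along the specific controlled trajectory, including near the phases where $\sin(b/2)$ is small. There we would use the margin $2K\sin(b/2)>2\pi$, which keeps the map from $\omega^1$ to $a_n$ nondegenerate, to keep the error bounds polynomial in $K$ per step.
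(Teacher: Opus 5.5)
Your outline is the paper's proof. You align $z_*$ backward in two steps to $\hat z=((\hat x_1,0),(\hat x_1,\bar x_2))$. You then control forward (rational-rotation trick, alignment, irrational rotation by $2\sqrt2\pi$) while counting steps. Finally you take a ball of phases around $\underline{\omega}_0^{N_1}$ whose image stays in $B(z_*,C_1K^{-130})$, and its measure gives $p_K$. Steps 1 and 3 match the paper.

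Step 2, however, rests on a miscalculation, and the ``main obstacle'' you name does not exist. The naive bound is not too small. A tolerance $\delta=K^{-CN_1}\ge K^{-CK^{132}}$ in each of the $2N_1$ scalar phase coordinates gives $(c\delta)^{2N_1}\ge K^{-C'N_1^2}\ge K^{-C''K^{264}}$. That is exactly $p_K$, and exactly how the paper concludes, with tolerance $\tfrac{1}{2C}K^{-N_1}C_1K^{-130}$.

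Your ``better bound'' (error growth $O(K)$ per step, total factor $K^{O(N_1)}$) is the same naive bound. It follows from the chain rule using only $\|D_xf_\omega\|,\|D_\omega f_\omega\|\le cK$, which holds uniformly along every trajectory. An upper bound on error propagation needs no structure of the controlled orbit and no nondegeneracy of $\omega^1\mapsto a_n$. Moreover, the restoring property of \eqref{eq:two-point choose omega_n} is a feedback property of the unperturbed orbit, since those phases are computed from the current state. It is lost once you perturb the fixed phases $\underline{\omega}_0^{N_1}$ open-loop, which is what Step 3 measures. What your per-step formulation does buy is care with constants. The constant in Lemma \ref{lem: priori estimates} is $C=C(n)$, so the paper's factor $CK^{N_1}$ should really be read as $(cK)^{N_1}$, which is harmless.

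The bookkeeping also needs fixing.
\begin{itemize}
\item In Step 2 you compare $\delta$ itself with $K^{-CK^{264}}$. If $\delta$ were that small, Step 3 would give only $K^{-CK^{396}}$. The exponent $264=2\cdot 132$ appears only after raising $\delta\approx K^{-CK^{132}}$ to the power $2N_1$.
\item This requires $N_1\lesssim K^{132}$, which comes from the \emph{sum} bound \eqref{eq: uniform upper bound for reaching z_*}, $N_1\le \pi K^9/s+12\pi K^{132}/C_1+4$, with $s$ fixed. Your product bound $K^{132}(1+s^{-1}K^9)\sim K^{141}$ would give $N_1^2\gg K^{264}$ and spoil the stated $p_K$.
\item Likewise, $s$ is not taken of order $K^{-C}$ later. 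The lemma is applied with a $K$-independent $s=s'$, and the factor $K^{-9}$ is already built into $\Delta(K^{-9}s)$. Taking $s\sim K^{-C}$ with $C>123$ would make $N_1^2\gg K^{264}$ and again break the bound.
\end{itemize}
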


\begin{proof}
    For any fixed $z=(x,y)\in \Delta(K^{-9}s)^c$, the existence of a control time $N_1(z)$ and a phase sequence $\underline{\omega}_0^{N_1}$ such that driving the system to $z_*$ follows directly from Proposition \ref{prop: controllability of two point}.

    To derive a uniform upper bound for $N_1(z)$, we employ a backward-forward argument. First, consider the inverse map $g_\omega:=f_\omega^{-1}$ introduced in \eqref{eq: inverse map}. Following the strategy in Lemma \ref{lem:two point controllability of bar_x}, there exists a two-step phase sequence $\underline{\hat{\omega}}^2\in\Omega_0^2$ such that the pre-image point
    \begin{equation*}
         \hat{x}:=\left(f_{\underline{\hat{\omega}}^2}\right)^{-1}\!(x_*),\qquad \hat{y}:=\left(f_{\underline{\hat{\omega}}^2}\right)^{-1}\!(y_*),\qquad \hat{z}:=(\hat{x},\hat{y}),
    \end{equation*}satisfies the alignment condition
    \begin{equation*}
        \hat{x}_1=\hat{y}_1,\qquad \hat{x}_2=0,\qquad \hat{y}_2=\bar{x}_2,
    \end{equation*}where $\bar{x}_2$ is the constant defined in Lemma~\ref{lem:two point controllability of bar_x}. By the Lipschitz estimates \eqref{eq:first derivative omega_i} and \eqref{eq:first derivative z_j}, to steer the system into the small set $B(z*,C_1K^{-130})$, it suffices to reach the intermediate target ball $B\!\left(\hat{z},C_1K^{-132}/2C\right)$ within a bounded number of steps, where $C=C(2)$ is the constant from Lemma \ref{lem: priori estimates}.

    Second, since the initial state $z=(x,y)$ lies in the good set $\Delta(K^{-9}s)^c$, we may assume that the initial vertical separation
    \begin{equation*}
        |y_2-x_2|\geq \frac{K^{-9}s}{2}.
    \end{equation*}Following the argument in the proof of Lemma \ref{lem:two point controllability of bar_x}, it requires at most $\left\lfloor\frac{\pi}{K^{-9}s}\right\rfloor+1$ steps to steer $z$ to a state $\tilde{z}=(\tilde{x},\tilde{y})$ satisfying 
    \begin{equation*}
        2K\sin\!\left(\frac{\tilde{y}_2-\tilde{x}_2}{2}\right)>2\pi.
    \end{equation*}Subsequently, the same argument in Lemma \ref{lem:two point controllability of bar_x} shows that, in at most $\left\lfloor\frac{12\pi}{C_1K^{-132}}\right\rfloor+1$ steps, we can reach the intermediate ball $B\!\left(\hat{z},C_1K^{-132}/2C\right)$. Finally, including the $2$ steps for the final alignment (via $\underline{\hat{\omega}}^2$), we conclude that for any $z\in \Delta(K^{-9}s)^c$,
    \begin{equation*}
        N_1(z)\leq \left\lfloor\frac{\pi}{K^{-9}s}\right\rfloor+\left\lfloor\frac{12\pi}{C_1K^{-132}}\right\rfloor+4.
    \end{equation*}

    Moreover, the construction above actually yields
    \begin{equation*}
        \Phi_{2N_1}\!\left(z,\underline{\omega}_0^{N_1}\right)\in B\!\left(z_*,C_1K^{-130}/2\right),
    \end{equation*}since the intermediate set is chosen as $B\!\left(\hat{z},C_1K^{-132}/2C\right)$. Hence, by the Lipschitz continuity of the flow, for any $\underline{\omega}^{N_1}\in B\!\left(\underline{\omega}_0^{N_1},\frac{1}{2C}K^{-N_1}C_1K^{-130}\right)$, we have the estimate
    \begin{equation*}
        \left|\Phi_{2N_1}\!\left(z,\underline{\omega}^{N_1}\right)-\Phi_{2N_1}\!\left(z,\underline{\omega}_0^{N_1}\right)\right|\leq CK^{N_1}\left|\underline{\omega}^{N_1}-\underline{\omega}_0^{N_1}\right|\leq C_1K^{-130}/2.
    \end{equation*}Consequently, we obtain
    \begin{equation*}
        \left|\Phi_{2N_1}\!\left(z,\underline{\omega}^{N_1}\right)-z_*\right|\leq C_1K^{-130},
    \end{equation*}and therefore the transition probability is bounded from below by
    \begin{align*}
        P^{(2),N_1}\!\left(z,B\!\left(z_*,C_1K^{-130}\right)\right)&\geq \mathbb{P}\!\left(\underline{\omega}^{N_1}\in B\!\left(\underline{\omega}_0^{N_1},\frac{1}{2C}K^{-N_1}C_1K^{-130}\right)\right)\\[0.2em]
        &=\left(C'K^{-N_1}K^{-130}\right)^{2N_1}\geq p_K.
    \end{align*}
\end{proof}
The following result is a direct consequence of Lemmas \ref{lem: quantitative small set at z_*} and \ref{lem: quantitative reachability}.

\begin{lemma}\label{lem: quantitative small set for P^M}
    Let $s>0$. Define the uniform time $M$ by 
    \begin{equation}\label{eq: uniform time M}
        M:=\inf\!\left\{2k:2k\geq \left\lfloor\frac{\pi}{K^{-9}s}\right\rfloor+\left\lfloor\frac{12\pi}{C_1K^{-132}}\right\rfloor+8\right\}.
    \end{equation}Then, the following bound holds
    \begin{equation*}
        \inf_{z\in \Delta(K^{-9}s)^c}P^{(2),M}(z,\cdot)\geq p_Kc_1(K)^{\left\lfloor\frac{M}{4}\right\rfloor-1}C_2K^{-780}\mu(\cdot)
    \end{equation*}where $c_1(K),C_2,\mu$ are as defined in Proposition \ref{lem: quantitative small set at z_*}, and $p_K$ is the reachability probability from Lemma \ref{lem: quantitative reachability}.
\end{lemma}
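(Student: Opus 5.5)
The argument is a Chapman--Kolmogorov concatenation of Lemma \ref{lem: quantitative reachability} with Proposition \ref{lem: quantitative small set at z_*}. Write $B_*:=B\!\left(z_*,C_1K^{-130}\right)$. Fix $z\in\Delta(K^{-9}s)^c$ and let $N_1=N_1(z)$ be the hitting time from Lemma \ref{lem: quantitative reachability}. By \eqref{eq: uniform upper bound for reaching z_*} and the definition \eqref{eq: uniform time M} of $M$, the remaining time satisfies
\begin{equation*}
n(z):=M-N_1(z)\geq 4 .
\end{equation*}
So Proposition \ref{lem: quantitative small set at z_*} applies with $n=n(z)$. The Markov property then gives, for any Borel set $A$,
\begin{equation*}
P^{(2),M}(z,A)\geq\int_{B_*}P^{(2),N_1}(z,\mathrm{d}z')\,P^{(2),n(z)}(z',A)\geq P^{(2),N_1}(z,B_*)\inf_{z'\in B_*}P^{(2),n(z)}(z',A).
\end{equation*}

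Next I bound the two factors separately. By \eqref{eq: quantitative reachability of z_*}, the first factor is at least $p_K$. By \eqref{eq: quantitative small set for z_*}, the second factor is at least
\begin{equation*}
c_1(K)^{\lfloor n(z)/4\rfloor-1}\,C_2K^{-780}\,\mu(A).
\end{equation*}
Since $c_1(K)=C_3K^{-764}\in(0,1)$ for $K$ large (shrinking $C_3$ if necessary), the map $j\mapsto c_1(K)^j$ is decreasing. Together with $n(z)\leq M$, this gives
\begin{equation*}
c_1(K)^{\lfloor n(z)/4\rfloor-1}\geq c_1(K)^{\lfloor M/4\rfloor-1}.
\end{equation*}
This lower bound does not depend on $z$, so taking the infimum over $z\in\Delta(K^{-9}s)^c$ yields the claimed bound.

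The main point to check is that the reachability time $N_1$ depends on $z$, whereas the kernel power $M$ must be uniform. This is exactly why Proposition \ref{lem: quantitative small set at z_*} is stated for every $n\geq4$ rather than only for multiples of $4$: it absorbs the variable leftover time $M-N_1(z)$. The constant $8$ in \eqref{eq: uniform time M}, versus $4$ in \eqref{eq: uniform upper bound for reaching z_*}, guarantees that this leftover is at least $4$. Taking $M$ even is only a convenience for the later application with $m=2$.
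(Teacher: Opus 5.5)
Your proposal is correct and follows the paper's proof. Both arguments concatenate, via Chapman--Kolmogorov, the reachability bound $P^{(2),N_1(z)}\bigl(z,B(z_*,C_1K^{-130})\bigr)\geq p_K$ with the local minorization of Proposition~\ref{lem: quantitative small set at z_*} over the leftover time $n=M-N_1(z)\in[4,M]$, and then use $c_1(K)<1$ to replace $\lfloor n/4\rfloor$ by $\lfloor M/4\rfloor$; the paper uses that last fact without stating it, while you make it explicit.
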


\begin{proof}
    Fix $z=(x,y)\in \Delta(K^{-9}s)^c$. By Lemma \ref{lem: quantitative reachability}, there exists a hitting time $N_1(z)$ such that the system enters the small set $B\!\left(z_*,C_1K^{-130}\right)$ with probability at least $p_K$. Crucially, the uniform upper bound \eqref{eq: uniform upper bound for reaching z_*} ensures that
    \begin{equation*}
       N_1\leq M-4.
    \end{equation*}We define the remaining time $n:=M-N_1(z)$, satisfying $4\leq n\leq M$. Applying the Chapman-Kolmogorov property, and combining the lower bounds from \eqref{eq: quantitative reachability of z_*} and \eqref{eq: quantitative small set for z_*}, we obtain that 
    \begin{align*}
        P^{(2),M}(z,\cdot)&\geq p_Kc_1(K)^{\left\lfloor\frac{n}{4}\right\rfloor-1}C_2K^{-780}\mu(\cdot)\\
        &\geq p_Kc_1(K)^{\left\lfloor\frac{M}{4}\right\rfloor-1}C_2K^{-780}\mu(\cdot),
    \end{align*}which completes the proof.
\end{proof}

\subsection{Proof of Theorem \ref{thm:quantitative}}
Having verified all the hypotheses of Theorem \ref{thm: general framework for quantitative exponential mixing}, the proof of Theorem \ref{thm:quantitative} follows directly from the arguments presented in Section \ref{sec: Quantitative exponential mixing for incompressible RDS}. For the sake of completeness, we provide a brief sketch of the proof below. In what follows, $C>0$ denotes a generic constant independent of $K$, whose exact value may change from line to line.
\begin{proof}[Proof of Theorem \ref{thm:quantitative}]
    Adopting the notation for $p,\gamma,C_0$ and $V$ from Proposition \ref{prop: quantitative drift condition}, we define the constant $R_2:=\frac{4C_0K^{9p}}{1-\gamma}$. Since $V(x,y)=\operatorname{dist}_{\mathbb{T}^2}(x,y)^{-p}$, there exists a $K$-independent constant $s'>0$ such that
    \begin{equation*}
        \{V\leq R_2\}\subset \Delta(K^{-9}s')^c.
    \end{equation*}Applying Lemma \ref{lem: quantitative small set for P^M} to the region $\Delta(K^{-9}s')^c$, we can select an even integer $M=2k$ as in \eqref{eq: uniform time M}, bounded by $M\leq CK^{132}$, such that
    \begin{align}
        P^{(2),M}V&\leq \gamma^kV+C_0K^{9p}\frac{1-\gamma^k}{1-\gamma},\label{eq: drift condition for P^M}\\
        \inf_{z\in\Delta(K^{-9}s')^c}P^{(2),M}(z,\cdot)&\geq p_Kc_1(K)^{\left\lfloor \frac{M}{4}\right\rfloor-1}C_2K^{-780}\mu(\cdot).\notag
   \end{align}By recalling the explicit forms of $p_K,c_1(K)$ and $\mu$ (from Proposition \ref{lem: quantitative small set at z_*}), we can absorb all polynomial factors into the exponent and obtain
    \begin{equation}\label{eq: quantitative small set for P^M}
        \inf_{z\in\{V\leq R_2\}}P^{(2),M}(z,\cdot)\geq K^{-C'K^{264}}\nu_K(\cdot),
    \end{equation}where $\nu_K$ is the probability measure obtained by normalizing $\mu$.

    Proceeding as in Section \ref{subsec: Contraction Estimate for the Transition Kernel}, we introduce the following auxiliary constants
    \begin{align*}
        \alpha&:=K^{-C'K^{264}},\quad 
        L_{2n}:=C_0K^{9p}\frac{1-\gamma^n}{1-\gamma},\quad n\geq  1,\\
        \gamma_0&:=\frac{3}{4},\quad 
        \beta:=\frac{\alpha}{2L_M},\quad 
        \bar{\alpha}:=\max\!\left\{1-\frac{\alpha}{2},\ \frac{2+\beta R_2\gamma_0}{2+\beta R_2}\right\}.
    \end{align*}In view of Remark \ref{rmk: gamma tends to 0}, provided $K$ is sufficiently large, we have 
    \begin{equation*}
        \frac{3}{4}\in \left(\frac{1+\gamma^k}{2},1\right)=\left(\gamma^k+\frac{2L_M}{R_2},1\right),
    \end{equation*}Consequently, to streamline the presentation, we set $\gamma_0=\frac{3}{4}$ here rather than $\frac{3+\gamma}{4}$ used previously in Section \ref{subsec: Contraction Estimate for the Transition Kernel}. Applying the quantitative Harris theorem to $P^{(2),M}$ then implies that for any mean-zero function $\varphi\in H^{1}\!\left(\left(\mathbb{T}^2\times\mathbb{T}^2\right)\setminus\Delta\right)$,
    \begin{equation}\label{eq: contraction on P^{Mn}}
        \left\|P^{(2),Mn}\varphi\right\|_\beta \leq \bar{\alpha}^n\|\varphi\|_\beta.
    \end{equation}Furthermore, similar arguments to those in Section \ref{subsec: Contraction Estimate for the Transition Kernel} yield the bounds
    \begin{equation*}
        \left\|P^{(2)}\varphi\right\|_{\beta}\leq C^pK^p\|\varphi\|_\beta,\qquad \|P^{(2),2}\varphi\|_\beta\leq (1+\alpha)\|\varphi\|_\beta.
    \end{equation*}

    Fixing an arbitrary $n\in\mathbb{N}$, we write $n=lM+r$ for some integer $0\leq r<M$. Combining the contraction estimate \eqref{eq: contraction on P^{Mn}} for $P^{(2),lM}$ with the bound on the remaining $r$ steps, we have 
    \begin{equation}\label{eq: estimate for |P^n|_beta}
        \left\|P^{(2),n}\varphi\right\|_\beta\leq C^pK^{p}(1+\alpha)^k\bar{\alpha}^{\frac{n}{M}-1}\|\varphi\|_\beta.
    \end{equation}For sufficiently large $K$, $\alpha$ is small enough such that $(1+\alpha)^M\leq 2$. Moreover, a direct calculation shows
    \begin{equation*}
        \frac{2+\beta R_2\gamma_0}{2+\beta R_2}=\frac{3}{4}+\frac{1}{4}\cdot\frac{1}{1+(1-\gamma^k)^{-1}\alpha}\in \left(1-\frac{\alpha}{2},1-\frac{\alpha}{4}\right),
    \end{equation*}which implies $\bar{\alpha} \leq 1-\frac{\alpha}{4}$. Substituting these bounds into \eqref{eq: estimate for |P^n|_beta}, we obtain
    \begin{equation}\label{eq: drift condition for P^n_beta}
        \left\|P^{(2),n}\varphi\right\|_{\beta}\leq CK^{p}\left(1-\frac{\alpha}{4}\right)^{\frac{n}{M}}\|\varphi\|_\beta\leq CK^pe^{-\frac{1}{4}\alpha^2n}\|\varphi\|_\beta.
    \end{equation}  

    Next, let $\mathbb{Z}_0^2:=\mathbb{Z}^2\setminus\{(0,0)\}$ and denote by $\{e_m(x)=e^{im\cdot x}:m\in\mathbb{Z}^2\}$ the standard orthogonal basis for $L^2(\mathbb{T}^2)$. Consider mean-zero functions $\varphi,\psi\in H^1$ admitting the Fourier expansions
    \begin{equation*}
        \varphi=\sum_{m\in\mathbb{Z}_0^2}\hat{\varphi}_me_m,\qquad \psi=\sum_{m\in\mathbb{Z}_0^2}\hat{\psi}_me_m.
    \end{equation*}Adapting the criterion from Appendix \ref{subsec: Proof of Theorem 1.1} with $d=2$, we fix a constant $\zeta>0$ satisfying
    \begin{equation}\label{eq: requirements for zeta}
        2\zeta-\frac{1}{4}\alpha^2<0,\quad  6+\frac{2\zeta-\frac{1}{4}\alpha^2}{\zeta}<0,\quad 2+\frac{2\zeta-\frac{1}{4}\alpha^2}{\zeta q}<0.
    \end{equation}For instance, one may choose
    \begin{equation*}
        \zeta=\frac{\alpha^2}{10}\min\!\left\{\frac{1}{4},\frac{1}{1+q}\right\}.
    \end{equation*}For any $m,m'\in\mathbb{Z}_0^2$, we introduce random variables analogous to those in Appendix~\ref{subsec: Proof of Theorem 1.1}
    \begin{align*}
        N_{m,m'}&:=\max\!\left\{n\in\mathbb{N}:\left|\int e_m(x)e_{m'}\circ f_{\underline{\omega}}^n(x)\pi(\mathrm{d}x)\right|>e^{-\zeta n}\right\},\\
        M_0&:=\max\!\left\{|m|\vee|m'|:e^{\zeta N_{m,m'}}>|m|\ |m'|\right\},\\
        \hat{D}&:=\max_{|m|,|m'|\leq M_0} e^{\zeta N_{m,m'}}.
    \end{align*}Repeating the estimates derived in Appendix~\ref{subsec: Proof of Theorem 1.1}, we deduce that
    \begin{equation*}
        \left|\int \varphi(x)\psi\circ f_{\underline{\omega}}^n(x)\pi(\mathrm{d}x)\right|\leq \hat{D}(\underline{\omega})e^{-\zeta n}\|\varphi\|_{\dot{H}^3}\|\psi\|_{\dot{H}^3}.
    \end{equation*}An application of Lemma \ref{lem: control H^s' via H^s} then relaxes this regularity constraint, yielding the lower-regularity bound
    \begin{equation*}
        \left|\int f(x)g\circ f_{\underline{\omega}}^n(x)\pi(\mathrm{d}x)\right|\leq\hat{D}(\underline{\omega})e^{-\frac{\zeta}{3}n}\|f\|_{\dot{H}^1}\|g\|_{\dot{H}^1}.
    \end{equation*}

    Finally, it remains to bound the moments of $\hat{D}$. A similar estimation strategy to that of 
    Appendix \ref{subsec: Proof of Theorem 1.1} establishes
    \begin{equation*}
        \mathbb{E}[\hat{D}^q]\leq CK^p\!\left(1+\frac{\zeta q}{\frac{1}{4}\alpha^2-2\zeta(1+q)}\right)^{\frac{1}{2}}\sum_{l=1}^\infty l^{5+\frac{2\zeta-\frac{1}{4}\alpha^2}{\zeta}}.
    \end{equation*}By the second constraint imposed in \eqref{eq: requirements for zeta}, the exponent of the summation index $l$ is strictly less than $-1$, ensuring absolute convergence of the series. Therefore, we achieve the uniform bound $\mathbb{E}[\hat{D}^q]\leq CK^p$, which completes the proof.
\end{proof}

\section{Mixing and enhanced dissipation: proof of Theorem \ref{thm: exponential mixing}}\label{sec: Mixing and enhanced dissipation: proof of Theorem 1.6}
The goal of this section is to establish a qualitative version of exponential mixing and enhanced dissipation. To facilitate the proof, Section \ref{subsec: General framework for the proof of Theorem 1} recalls a general framework that reduces our main theorem to a set of verifiable dynamical conditions. The subsequent subsections are then devoted to verifying these hypotheses for our model.

\vspace{0.5em}

We begin by introducing necessary preliminaries on Lyapunov exponents. Let $f_\omega$ be a continuous RDS on a complete metric space $X$. The asymptotic growth of the derivative cocycle $D_xf_{\underline{\omega}}^n$ is characterized by the following theorem. For a detailed exposition, see \cite{Arnold-95}.

\begin{theorem}
    Let $f_{\underline{\omega}}^n$ be a continuous RDS with \textit{ergodic stationary} measure $\pi\in\mathcal{P}(X)$. Assume that the associated linear cocycle satisfies the integrability condition
    \begin{equation*}
        \int_{X\times\Omega_0} \left(\log^+\left\|D_xf_\omega\right\|+\log^+\left\|\left(D_xf_\omega\right)^{-1}\right\|\right)\mathrm{d}\mathbb{P}_0(\omega)\mathrm{d}\pi(x)<\infty.
    \end{equation*}Then there exist $r\in \{1,\ldots,d\}$, constant numbers $\lambda_1>\cdots>\lambda_r>-\infty$, and a filtration of subspaces 
    \begin{equation*}
        \mathbb{R}^d=F_1(\underline{\omega},x)\supsetneq \cdots\supsetneq F_r(\underline{\omega},x)\supsetneq F_{r+1}(\underline{\omega},x):=\{0\}
    \end{equation*}with the property that 
    \begin{equation*}
        \lambda_i=\lim_{n\rightarrow\infty}\frac{1}{n}\log\left\|D_xf_{\underline{\omega}}^n v\right\|,
    \end{equation*}for all $v\in F_i(\underline{\omega},x)\setminus F_{i+1}(\underline{\omega},x)$. Moreover, the dimensions $\dim F_i(\underline{\omega},x)$ are constant for $\mathbb{P}\times\pi$-almost every $(\underline{\omega},x)$.
\end{theorem}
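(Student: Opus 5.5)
This is the Oseledets multiplicative ergodic theorem in its random-dynamical-systems form. I would not reprove the linear-algebra core from scratch. Instead I would reduce it to the classical theorem for a measurable linear cocycle over an ergodic measure-preserving transformation.

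\textbf{Step 1 (skew product).} On $\Omega\times X$ define
\begin{equation*}
\Theta(\underline{\omega},x):=(\theta\underline{\omega},f_{\omega_1}(x)).
\end{equation*}
By the cocycle property, $f^n_{\underline{\omega}}(x)$ is the second component of $\Theta^n(\underline{\omega},x)$, and
\begin{equation*}
D_xf^n_{\underline{\omega}}=A(\Theta^{n-1}(\underline{\omega},x))\cdots A(\underline{\omega},x),\qquad A(\underline{\omega},x):=D_xf_{\omega_1}.
\end{equation*}
Because $\omega_1$ is independent of $(\omega_2,\omega_3,\dots)$ and $\pi$ is stationary, $\Theta$ preserves $\mathbb{P}\times\pi$. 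This is a Fubini computation against cylinder sets.

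\textbf{Step 2 (ergodicity of $\Theta$).} I expect this to be the main obstacle. Take a $\Theta$-invariant function $F\in L^2(\mathbb{P}\times\pi)$ and set $\bar F(x):=\int F(\underline{\omega},x)\,d\mathbb{P}$. First show that $F$ cannot depend on $\underline{\omega}$. Conditioning on the first $n$ coordinates and using invariance under $\Theta^n$ expresses $F$ in terms of the tail $\theta^n\underline{\omega}$. A martingale convergence argument (Kakutani's lemma), together with the independence of the coordinates of $\underline{\omega}$, then gives $F(\underline{\omega},x)=\bar F(x)$ almost surely. The reduced function satisfies $P\bar F=\bar F$ $\pi$-a.e. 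Ergodicity of $\pi$ for $P$ then forces $\bar F$ to be constant, since one can pass from invariant functions to invariant sets via their level sets.

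\textbf{Step 3 (classical Oseledets).} The integrability hypothesis is exactly $\log^+\|A\|,\log^+\|A^{-1}\|\in L^1(\mathbb{P}\times\pi)$. I would then invoke the one-sided Oseledets theorem for invertible cocycles over $\Theta$, proved via Kingman's subadditive ergodic theorem applied to $\log\|\wedge^k D_xf^n_{\underline{\omega}}\|$ for each $k$; alternatively one can cite Raghunathan's proof. This yields:
\begin{itemize}
\item the limits $\lim_n\frac1n\log\|D_xf^n_{\underline{\omega}}v\|$ exist;
\item these limits take finitely many values $\lambda_1>\dots>\lambda_r$;
\item a measurable filtration $F_i(\underline{\omega},x)$ that is equivariant, i.e. $A\,F_i=F_i\circ\Theta$.
\end{itemize}
Kingman's theorem gives exponents that are $\Theta$-invariant functions, and Step 2 makes them constant. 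The dimensions $\dim F_i$ are likewise $\Theta$-invariant integer-valued functions, hence also constant $\mathbb{P}\times\pi$-almost everywhere. This completes the proof.
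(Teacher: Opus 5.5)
Your proposal is correct and is essentially the paper's approach: the paper gives no proof of its own and simply cites Arnold's monograph, and your reduction is precisely the standard argument behind that citation, namely the skew product $\Theta$ preserving $\mathbb{P}\times\pi$, ergodicity of $\Theta$ inherited from ergodicity of $\pi$, and then the one-sided Oseledets theorem for the $GL_d(\mathbb{R})$-valued cocycle $D_xf_{\omega_1}$ over the non-invertible base. The one step you leave vague closes in a line: with $\mathcal{F}_n$ generated by $\omega_1,\dots,\omega_n$ and $x$, one has $\mathbb{E}[F\mid\mathcal{F}_n]=\bar F\circ f^n_{\underline{\omega}}$, whose $L^2(\mathbb{P}\times\pi)$-norm equals $\|\bar F\|_{L^2(\pi)}$ for every $n$ by stationarity, so this martingale has zero increments and $F=\mathbb{E}[F\mid\mathcal{F}_0]=\bar F$.
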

The constant values $\lambda_i$ are called \textit{Lyapunov exponents} with multiplicities
\begin{equation*}
    m_i:=\dim F_i(\underline{\omega},x)-\dim F_{i+1}(\underline{\omega},x).
\end{equation*}The corresponding random sequence of subspaces $F_i$ is referred to as the \textit{Oseledets filtration}. By analogy with the identity
\begin{equation*}
    |\det B|=\prod_{i=1}^d \sigma_i(B), \quad B\in GL_d(\mathbb{R}),
\end{equation*}where $\sigma_i(B)$ are the singular values of $B$, we obtain the following limiting formula:
\begin{equation*}
    \lambda_{\Sigma}:=m_1\lambda_1+\cdots+m_r\lambda_r=\lim_{n\rightarrow\infty}\frac{1}{n}\log\left|\det(D_xf_{\underline{\omega}}^n)\right|, 
\end{equation*}where $\lambda_{\Sigma}$ is called the \textit{sum Lyapunov exponent}. In particular, if the random dynamical system is generated by incompressible (i.e., volume-preserving) diffeomorphisms, we have
\begin{equation*}
    \left|\det(D_xf_{\underline{\omega}}^n)\right|\equiv1,
\end{equation*}which implies $\lambda_{\Sigma}=0$. Furthermore, it is straightforward to check that
\begin{equation*}
    d\lambda_1\geq \lambda_{\Sigma}=0,
\end{equation*}where equality holds if and only if $r=1$ (that is, when all Lyapunov exponents coincide). A strictly positive top Lyapunov exponent $\lambda_1>0$ indicates exponential growth of typical tangent vectors and is deeply associated with mixing. Therefore, verifying the strict positivity $\lambda_1>0$ is of importance for our analysis.

\subsection{General framework for the proof of Theorem \ref{thm: exponential mixing}}\label{subsec: General framework for the proof of Theorem 1}In this section, we take the state space $X$ to be a complete metric space, and assume that the continuous RDS $f_\omega$ satisfies the Hypotheses (\ref{hypothese H_1})-(\ref{hypothese H_3}) formulated in Section \ref{sec: Quantitative exponential mixing for incompressible RDS}. The next proposition is a key ingredient in the proof of Theorem \ref{thm: exponential mixing}. Specifically, it establishes exponential mixing for the advection-diffusion equation~\eqref{eq:advection-diffusion} along the discrete times $t=2n$. This mixing result was first proved for the inviscid case $\nu=0$ in \cite{BBPS-22A,BCZG-23}, and was subsequently extended to the diffusive regime $0<\nu\ll 1$ in \cite{CIS-25}.
\begin{proposition}\label{prop:discrete time mixing}
    Let $f_{\omega,\nu}$ be a continuous RDS on $\mathbb{T}^2$ generated by SDE \eqref{eq:Lagrangian flow} with $\nu\geq 0$. Assume that the two-point process defined by $f_{\omega,0}$ is $V$-uniformly geometrically ergodic with $V\in L^1\left(\left(\mathbb{T}^2\times\mathbb{T}^2\right)\setminus\Delta\right)$. For any $q,s>0$ and any $\nu\geq0$ sufficiently small, there exist a random variable $\tilde{D}_\nu:\Omega\rightarrow [1,+\infty)$, and a $\nu$-independent constant $\lambda_s>0$ such that for all mean-zero $g,h\in H^s(\mathbb{T}^2)$, we have that 
    \begin{equation}
        \label{eq:discrete exponential mixing}
        \left|\int_{\mathbb{T}^2}g(x)h\circ f_{\underline{\omega},\nu}^n(x)\mathrm{d}x\right|\leq \tilde{D}_\nu(\underline{\omega})e^{-\lambda_sn}\|g\|_{\dot{H}^s}\|h\|_{\dot{H}^s}
    \end{equation}almost surely for all $n\in\mathbb{N}$. Furthermore, there exists a $\nu$-independent constant $C_q$ such that
    \begin{equation*}
        \mathbb{E}[\tilde{D}_\nu^q]\leq C_q.
    \end{equation*}
\end{proposition}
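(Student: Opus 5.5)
The argument follows the Dolgopyat--Kaloshin--Koralov scheme, in the form refined in the inviscid setting and extended to small $\nu$. The first step is to show that $V$-uniform geometric ergodicity of the inviscid two-point chain persists, uniformly in $\nu$, for the viscous two-point chain $P^{(2),\nu}$. For that chain, two particles driven by the same velocity field receive independent Brownian increments on each time interval, $\phi^\nu(x)$ and $\phi^\nu(y)$ with noises $W$ and $W'$. I will check the drift condition $P^{(2),\nu}V\le\gamma V+C$ with constants independent of $\nu\le\nu_0$. Away from the diagonal this is immediate. Near the diagonal I will perturb the inviscid estimate: the additional diffusive separation only helps $V=\operatorname{dist}^{-p}$ on average, which can be quantified either by conditioning on the noise or by the quantitative Harris perturbation argument of \cite{CIS-25}. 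The minorization on $\{V\le R\}$ transfers by weak continuity of $\nu\mapsto P^{(2),\nu}(z,\cdot)$ on compact sets, after passing to a mollified version of the measure $\nu$. This yields a $\nu$-uniform contraction $\|P^{(2),\nu,n}\varphi-\int\varphi\,\mathrm{d}\pi^{(2)}\|_\beta\le C\bar\alpha^n\|\varphi\|_\beta$. Incompressibility of each realization means Lebesgue $\otimes$ Lebesgue remains the stationary measure.

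The second step is to convert this into annealed correlation decay for Fourier modes. For $m,m'\in\mathbb{Z}_0^2$, I write
\begin{equation*}
\mathbb{E}\Big|\int e_m\,e_{m'}\circ f^n_{\underline\omega,\nu}\,\mathrm{d}x\Big|^2=\int\!\!\int e_m(x)\overline{e_m(y)}\,P^{(2),\nu,n}\big(e_{m'}\otimes\overline{e_{m'}}\big)(x,y)\,\mathrm{d}x\,\mathrm{d}y .
\end{equation*}
The observable $e_{m'}\otimes\overline{e_{m'}}$ has zero mean under Lebesgue measure on $\mathbb{T}^2\times\mathbb{T}^2$, and it lies in $L^\infty\subset L^\infty_\beta$. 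Since $V\in L^1$, this gives the bound $C(1+\|V\|_{L^1})\bar\alpha^n$. The bound is uniform in $m,m'$, and an $|m||m'|$ factor can be inserted if needed.

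The third step is a Borel--Cantelli argument. I set $N_{m,m'}$ to be the last $n$ with $|\int e_m e_{m'}\circ f^n|>e^{-\zeta n}$, and define $\tilde D_\nu=\max_{|m|,|m'|\le M_0}e^{\zeta N_{m,m'}}$, exactly as in the proof of Theorem \ref{thm:quantitative}. Chebyshev's inequality gives $\mathbb{P}(N_{m,m'}\ge n)\lesssim e^{(2\zeta-\lambda)n}$. Summing over modes with polynomial weights controls the $q$-th moments for $\zeta$ small relative to $\lambda=-\log\bar\alpha$ and $q$. Expanding $g,h$ in Fourier series then gives the estimate for $H^s$ with $s$ large, say $s=3$. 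I will reduce to arbitrary $s>0$ by interpolation against the trivial bound $\|g\|_{L^2}\|h\|_{L^2}$ (Lemma \ref{lem: control H^s' via H^s}), which changes the rate to some $\lambda_s$.

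The main obstacle is the first step: proving that the drift and minorization constants are uniform as $\nu\to0$. The difficulty is near the diagonal, where the viscous noise is not small relative to $\operatorname{dist}(x,y)$. I will first try the perturbative route: control $P^{(2),\nu}V-P^{(2),0}V$ on the region $\operatorname{dist}\ge\nu^{1/2}$, and separately exploit the fact that independent Brownian motions push pairs out of the region $\operatorname{dist}<\nu^{1/2}$ with $\mathbb{E}\operatorname{dist}^{-p}$ bounded.
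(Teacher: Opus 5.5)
The paper gives no proof of this proposition. It imports the case $\nu=0$ from \cite{BBPS-22A,BCZG-23} and the case $0<\nu\ll 1$ from \cite{CIS-25}. Your second and third steps are the standard Dolgopyat--Kaloshin--Koralov argument, the same computation as in Appendix~\ref{subsec: Proof of Theorem 1.1}, and under the stated hypothesis they already settle $\nu=0$. If for $\nu>0$ you simply invoke \cite{CIS-25} for $\nu$-uniform Harris conditions, you are at the paper's level.

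The self-contained first step you sketch, however, does not work as written, and it is the whole content of the $\nu>0$ case. Start with the drift. Diffusion does not ``only help'' $V=\operatorname{dist}^{-p}$. The separation lives in $\mathbb{R}^2$, where $\Delta|z|^{-p}=p^2|z|^{-p-2}>0$ away from the origin. So for $\xi$ a standard Gaussian in $\mathbb{R}^2$ and $|a|\gg\sqrt\nu$ one has $\mathbb{E}|a+\sqrt{\nu}\,\xi|^{-p}=|a|^{-p}\bigl(1+\tfrac{p^2\nu}{2|a|^2}+\dots\bigr)$. At $|a|\sim\sqrt\nu$ the noise costs a fixed factor $C_p>1$. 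Your split into a perturbative region $\operatorname{dist}\ge\nu^{1/2}$ and a diffusive region $\operatorname{dist}<\nu^{1/2}$ leaves the crossover zone $c\sqrt\nu\le\operatorname{dist}\le M\sqrt\nu$ uncontrolled:
\begin{itemize}
\item there $V\sim\nu^{-p/2}$ is unbounded as $\nu\to0$, so it cannot go into the additive constant;
\item the perturbation error is of relative size $O(1)$;
\item the bound $P^{(2),\nu}V\lesssim\nu^{-p/2}$ only gives $P^{(2),\nu}V\lesssim M^pV$, not $\le\gamma V$.
\end{itemize}
Since the hypothesis gives an arbitrary $\gamma<1$, a one-step drift uniform in $\nu$ cannot come from perturbing the inviscid inequality. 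You need an extra mechanism. One option is to pass to $P^{(2),\nu,n_0}$, so that the linearized inviscid coefficient $\gamma^{n_0}$ absorbs the smoothing loss. For $p<1$ that loss is bounded independently of the conditional noise covariance: project onto the direction of $a$ and use $\sup_{t>0,\,\sigma\ge0}t^p\,\mathbb{E}|t+\sigma Z|^{-p}<\infty$ for a standard Gaussian $Z$.

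The minorization transfer also fails as stated. Weak continuity of $\nu\mapsto P^{(2),\nu}(z,\cdot)$ cannot give a setwise bound $P^{(2),\nu,n}(z,\cdot)\ge\alpha\,\mu_\nu(\cdot)$ with $\alpha$ uniform in $\nu$, because weak convergence allows singular, $z$-dependent approximants. No total-variation or $\|\cdot\|_\beta$ perturbation is available either. Near the diagonal the viscous and inviscid kernels are essentially disjoint at scales $\lesssim\sqrt\nu$. In the Chirikov model the one-step inviscid two-point kernel is even singular with respect to Lebesgue measure, while the viscous one is not.

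What transfers is the mechanism behind the inviscid small set, not the inequality itself. Condition on a Brownian path that stays uniformly small on the relevant time window; this event has probability tending to $1$ as $\nu\to0$. On it, the map from phases to the two-point endpoint is a $C^1$-small perturbation of the inviscid one. The submersion argument of Lemma~\ref{lem: small set}, made quantitative via Lemma~\ref{lem: Quantitative inverse function theorem}, then gives a density lower bound with $\nu$-uniform constants; the reference measure may depend on $\nu$. The same robustness is needed for the reachability step. This uses the concrete structure of the model, not just the abstract hypothesis that the inviscid chain is $V$-uniformly geometrically ergodic.

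Finally, your second-step identity, with independent increments for the two particles, computes the second moment of the Brownian-averaged correlation $\int g\,\mathbb{E}_W[h\circ f^n_{\underline\omega,\nu}]\,\mathrm{d}x$. For a single realization of the stochastic flow the two particles would share the noise.
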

Proposition~\ref{prop:discrete time mixing} demonstrates that, uniformly in $\nu$, exponential mixing can be deduced directly from the uniform geometric ergodicity of the two-point process in the inviscid case $\nu=0$. Accordingly, for the remainder of this section, we restrict our attention to the inviscid dynamics and, for notational convenience, write $f_\omega := f_{\omega,0}$. 

To establish geometric ergodicity of the two-point process, the classical Harris theorem (see, e.g., \cite{book-MR2509253}) shows that it suffices to verify the following conditions:
\begin{itemize}
    \item [(1)] (\textbf{Small set}) There exist $n\in\mathbb{N}$, an open set $A\subset X$ and a positive measure $\nu_n$ on $X$ such that, for all $B\in\mathcal{B}(X)$ we have that 
    \begin{equation*}
        \inf_{x\in A}P^n(x,B)\geq \nu_n(B).
    \end{equation*}
    \item [(2)] (\textbf{Aperiodicity}) There exists $x_*\in X$ such that for every open neighborhood $A\ni x_*$ we have that $P(x_*,A)>0$.
    \item [(3)] (\textbf{Topological irreducibility}) For all $x\in X$ and nonempty open $A\subset X$, there exists $N=N(x,A)$ such that $P^N(x,A)>0$.
    \item [(4)] (\textbf{Lyapunov drift condition}) There exist a measurable function $V\colon X\rightarrow [0,+\infty)$, constants $\alpha\in (0,1)$, $C_0>0$, and a compact set $E\subset X$ satisfying the following drift condition
    \begin{equation*}
        PV(x)\leq \alpha V(x)+C_0\mathbf{1}_E(x),
    \end{equation*}for all $x\in X$.
\end{itemize}
In the subsequent lemmas, we formulate sufficient conditions that guarantee the small set, aperiodicity, and Lyapunov drift properties. The remaining ingredient, topological irreducibility for the randomized Chirikov standard map, will be established via a controllability argument. To this end, we introduce the following map. For each fixed $x\in X$, define $\Phi_x\colon \Omega_0^n\to X$ by
\begin{equation*}
    \Phi_x(\underline{\omega}^n):= f_{\omega_n}\circ\cdots\circ f_{\omega_1}(x), \qquad \underline{\omega}^n=(\omega_1,\ldots,\omega_n)\in\Omega_0^n.
\end{equation*}
\begin{lemma}[{\hspace{-0.05em}\cite[Proposition 3.1]{BCZG-23}, \textit{Small set}}]\label{lem: small set} Assume that the RDS $f_\omega$ satisfies Hypothesis~\textup{(\ref{hypothese H_1})}. Suppose there exist $n\geq 1$, $x_*\in X$ and $\underline{\omega}_*^n\in\Omega_0^n$ such that the following conditions hold:
    \begin{itemize}
        \item [(1)] There exist $\epsilon,\delta>0$ such that for all $\underline{\omega}^n\in\Omega_0^n$ satisfying $\left|\underline{\omega}^n-\underline{\omega}_*^n\right|\leq\epsilon$, one has
        \begin{equation*}
            \rho_0^n(\underline{\omega}^n)\geq\delta,
        \end{equation*}where $\rho_0^n$ denotes the density of the product probability measure $(\mathbb{P}_0)^n$.
        \item [(2)] $\Phi_{x_*}$ is a submersion at $\underline{\omega}^n=\underline{\omega}_*^n$. 
    \end{itemize}Then there exist an open set $A\subset X$ and a positive measure $\nu_n$ on $X$, absolutely continuous with respect to Lebesgue measure, such that for all $B\in\mathcal{B}(X)$ we have that 
    \begin{equation*}
        \inf_{x\in A} P^n(x,B)\geq \nu_n(B).
    \end{equation*}
\end{lemma}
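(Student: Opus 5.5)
The plan is to reproduce, in qualitative form, the argument of Lemma \ref{lem: minorization condition}: split the noise variables, apply the inverse function theorem to a square submatrix of the Jacobian, and then change variables. By hypothesis (2), $D_{\underline{\omega}^n}\Phi_{x_*}(\underline{\omega}_*^n)\colon\mathbb{R}^{dn}\to T_{x_*'}X$ is surjective, where $x_*':=\Phi_{x_*}(\underline{\omega}_*^n)$ and $\dim X=D$. We choose $D$ coordinates $\xi$ of $\underline{\omega}^n$ such that $D_\xi\Phi_{x_*}(\underline{\omega}_*^n)$ is invertible, and write $\underline{\omega}^n=(\xi,\theta)$ with $\underline{\omega}_*^n=(\xi_*,\theta_*)$. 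Set $F(x,\xi,\theta):=f_{\underline{\omega}^n}(x)$. By Hypothesis (\ref{hypothese H_1}), $F$ is $C^2$ jointly in $(x,\xi,\theta)$ as a composition of $C^2$ maps. Hence $\det D_\xi F$ is continuous, and there is $\eta>0$ with $|\det D_\xi F(x,\xi,\theta)|\ge s/2$ whenever $|x-x_*|+|\xi-\xi_*|+|\theta-\theta_*|<\eta$, where $s:=|\det D_\xi F(x_*,\xi_*,\theta_*)|>0$.

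Next we fix $(x,\theta)$ near $(x_*,\theta_*)$ and apply the inverse function theorem to $\xi\mapsto F(x,\xi,\theta)$, using a version with parameters, i.e.\ uniform in $(x,\theta)$ on a compact neighborhood. Concretely, we apply the implicit function theorem to $G(x,\theta,\xi,y)=F(x,\xi,\theta)-y$ at the point $(x_*,\theta_*,\xi_*,x_*')$. This gives radii $r_1,r_2,r_3>0$ with the following property: for every $x\in B(x_*,r_1)$ and $\theta\in B(\theta_*,r_1)$, the map $\xi\mapsto F(x,\xi,\theta)$ is a diffeomorphism from an open subset $U_{x,\theta}\subset B(\xi_*,r_2)$ onto $B(x_*',r_3)$, with Jacobian bounded above by a constant $L$ (by continuity of $D_\xi F$ on the compact neighborhood). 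Shrinking the radii, we may also assume $r_2+r_1\le\epsilon$, so that by assumption (1) the density satisfies $\rho_0^n\ge\delta$ on $B(\xi_*,r_2)\times B(\theta_*,r_1)$.

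With these in hand, for $x\in A:=B(x_*,r_1)$ and $B\in\mathcal{B}(X)$, we apply Fubini and then the change of variables $y=F(x,\xi,\theta)$ on $U_{x,\theta}$:
\begin{equation*}
P^n(x,B)\ge\delta\int_{B(\theta_*,r_1)}\int_{U_{x,\theta}}\mathbf 1_B(F(x,\xi,\theta))\,\mathrm d\xi\,\mathrm d\theta\ge \delta\,\operatorname{Leb}(B(\theta_*,r_1))\,L^{-1}\operatorname{Leb}(B\cap B(x_*',r_3)).
\end{equation*}
Setting $\nu_n:=\delta L^{-1}\operatorname{Leb}(B(\theta_*,r_1))\operatorname{Leb}(\,\cdot\cap B(x_*',r_3))$ gives a positive measure, absolutely continuous with respect to Lebesgue, and the bound is uniform in $x\in A$ as required. (On the torus we work in local charts, and the lower bound is unaffected.)

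The main obstacle is the uniformity step: the image ball $B(x_*',r_3)$ must be independent of $(x,\theta)$. This is handled exactly as in Lemma \ref{lem: minorization condition}. The implicit function theorem with the target point $y$ as an extra variable yields a common target ball, because $F(x,\xi_*,\theta)$ moves continuously with $(x,\theta)$. One therefore first takes the ball around $x_*'$ of radius $2r_3$ for the base point, then shrinks $r_1$ so that $|F(x,\xi_*,\theta)-x_*'|<r_3$. Injectivity of $\xi\mapsto F(x,\xi,\theta)$ on $B(\xi_*,r_2)$ follows from a uniform bound $\|D_\xi F-D_\xi F(x_*,\xi_*,\theta_*)\|\le\tfrac12\|(D_\xi F(x_*,\xi_*,\theta_*))^{-1}\|^{-1}$, which holds for small radii.
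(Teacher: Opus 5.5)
Your argument is correct, and it is essentially the qualitative form of the argument the paper carries out quantitatively in Lemma \ref{lem: minorization condition}: split the noise as $(\xi,\theta)$ with $D_\xi\Phi_{x_*}$ invertible, use the implicit/inverse function theorem to get a target ball uniform in the starting point and in $\theta$, change variables in $\xi$, and integrate over $\theta$ against the density lower bound. For this particular statement the paper gives no proof of its own and simply cites \cite[Proposition 3.1]{BCZG-23}.
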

\begin{lemma}[{\hspace{-0.05em}\cite[Lemma 3.2]{BCZG-23}, \textit{Aperiodicity}}]\label{lem: aperiodicity}
    Under Hypothesis \textup{(\ref{hypothese H_1})}, assume that there exist $\omega_*\in\Omega_0$ and $x_*\in X$ such that $f_{\omega_*}(x_*)=x_*$. Then for any open neighbourhood $A\ni x_*$, there holds that $P(x_*,A)>0$.
\end{lemma}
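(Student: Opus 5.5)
The plan is to use absolute continuity of the noise together with continuity of the dynamics in $\omega$. Fix an open neighbourhood $A\ni x_*$. By definition,
\begin{equation*}
    P(x_*,A)=\mathbb{P}_0\bigl(f_\omega(x_*)\in A\bigr)=\int_{\Omega_0}\mathbf{1}_A\bigl(f_\omega(x_*)\bigr)\rho_0(\omega)\,\mathrm{d}\omega .
\end{equation*}
It therefore suffices to find a set of phases $\omega$ with positive $\mathbb{P}_0$-measure on which $f_\omega(x_*)\in A$.

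First, by Hypothesis (\ref{hypothese H_1}) the map $(\omega,x)\mapsto f_\omega(x)$ is $C^2$, so in particular $\omega\mapsto f_\omega(x_*)$ is continuous. Since $f_{\omega_*}(x_*)=x_*\in A$ and $A$ is open, there is $\epsilon>0$ such that $f_\omega(x_*)\in A$ for all $\omega\in B(\omega_*,\epsilon)$. This gives
\begin{equation*}
    P(x_*,A)\geq \mathbb{P}_0\bigl(B(\omega_*,\epsilon)\bigr)=\int_{B(\omega_*,\epsilon)}\rho_0(\omega)\,\mathrm{d}\omega .
\end{equation*}

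The remaining step, and the only real obstacle, is to show that this last integral is strictly positive. A density alone does not rule out $\rho_0$ vanishing near $\omega_*$, so we need $\omega_*$ to lie in the support of $\mathbb{P}_0$. This is an implicit requirement in the cited \cite[Lemma 3.2]{BCZG-23}, analogous to condition (1) of Lemma \ref{lem: small set}, namely a lower bound $\rho_0\geq\delta>0$ near $\omega_*$. In our setting it holds automatically: for the randomized Chirikov map $\mathbb{P}_0$ is uniform on $[0,2\pi)^2$, so its density is $(4\pi^2)^{-1}$ on the whole torus of phases. Then
\begin{equation*}
    \mathbb{P}_0\bigl(B(\omega_*,\epsilon)\bigr)\geq c\,\epsilon^2>0,
\end{equation*}
which concludes the argument. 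I would state explicitly that $\omega_*\in\operatorname{supp}\mathbb{P}_0$ is assumed, for instance via a positive lower bound on $\rho_0$ in a neighbourhood of $\omega_*$.

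For the later application to the two-point chain, the fixed point can be taken to be $z_*=((0,0),(0,\pi))$ with $\omega_*=\mathbf{0}$. One checks directly that $f_{\mathbf 0}(0,0)=(0,0)$ and $f_{\mathbf 0}(0,\pi)=(0,\pi)$, since $\sin\pi=0$. This supplies the hypothesis of the lemma.
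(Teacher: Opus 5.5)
Your argument is correct and is the standard one behind the cited \cite[Lemma 3.2]{BCZG-23}; the paper gives no proof of its own. Continuity of $\omega\mapsto f_\omega(x_*)$ sends a ball $B(\omega_*,\epsilon)$ of phases into $A$, and positivity of $\mathbb{P}_0\bigl(B(\omega_*,\epsilon)\bigr)$ finishes the proof. You are also right that this last step needs $\omega_*\in\operatorname{supp}\mathbb{P}_0$, which the statement as written omits; it does hold for the uniform phases, since even at the corner $\omega_*=\mathbf{0}$ of $[0,2\pi)^2$ the ball meets the support in a quarter-disc of area $\pi\epsilon^2/4$.
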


To verify the Lyapunov drift condition, certain auxiliary results are required. A crucial step is to rule out the degenerate case where $\lambda_1=0$. Henceforth, we restrict our discussion to the setting of the random Chirikov standard map, where $X=\mathbb{T}^2$. In this setting, we define the derivative cocycle $\Psi_x: \Omega_0^n \to SL_2(\mathbb{R})$ by
\begin{equation*}
    \Psi_x(\underline{\omega}^n) := \frac{1}{\left|\det D_x\Phi_x\right|^{1/2}} D_x\Phi_x = D_x\Phi_x,
\end{equation*}where the last equality holds since the flow $f_\omega$ is volume-preserving.
\begin{lemma}[{\hspace{-0.05em}\cite[Proposition 3.2]{BCZG-23}, \textit{Positive Lyapunov
 exponent}}]\label{lem:positivity}
    Let $f_\omega$ be a continuous RDS with an ergodic stationary measure $\pi$. Under Hypotheses \textup{(\ref{hypothese H_1})-(\ref{hypothese H_2})}, assume that there exist $n\in\mathbb{N}$, $\underline{\omega}_*^n\in\Omega_0^n$, and $x_*\in X$ satisfying the following properties:
    \begin{enumerate}[label=(\arabic*), ref=(\arabic*)]
    \item\label{property of positivity 1}There exist $\epsilon,\delta>0$ such that $\rho_0^n(\underline{\omega}^n)\geq \delta$ if $\left|\underline{\omega}^n-\underline{\omega}_*^n\right|\leq \epsilon$.
    \item\label{property of positivity 2} $\Phi_{x_*}$ is a submersion at $\underline{\omega}^n=\underline{\omega}_*^n$.
    \item\label{property of positivity 3} The restriction of $D_{\underline{\omega}_*^n}\Psi_{x_*}$ to $\ker\left(D_{\underline{\omega}_*^n}\Phi_{x_*}\right)$
        \begin{equation*}
            D_{\underline{\omega}_*^n}\Psi_{x_*} : \ker\bigl(D_{\underline{\omega}_*^n}\Phi_{x_*}\bigr) \rightarrow T_{\Psi_{x_*}(\underline{\omega}_*^n)} SL_2(\mathbb{R}),
        \end{equation*}is surjective.
    \end{enumerate}
    Then $\lambda_1>0$.
\end{lemma}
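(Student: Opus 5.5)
The claim that $\lambda_1>0$ will follow from a Furstenberg-type criterion. The standard dichotomy for $SL_2(\mathbb{R})$-cocycles over a Markov process with ergodic stationary measure $\pi$ says the following. If $\lambda_1=0$, then, because $\lambda_\Sigma=0$ forces $\lambda_1=-\lambda_2$, there is a measurable family of rigid structures invariant under $\mathbb{P}_0$-a.e. cocycle step. This family is either a $\pi$-measurable choice of a Euclidean inner product on $T_xX$ (conformal structure) or a finite union of lines in $T_xX$. Concretely, there is a measurable $x\mapsto \mathcal{S}_x$, an element of the finite-union-of-lines space or of $SL_2(\mathbb{R})/SO(2)$, such that
\begin{equation*}
    (D_xf_\omega)_*\mathcal{S}_x=\mathcal{S}_{f_\omega(x)}
\end{equation*}
for $\mathbb{P}_0\times\pi$-a.e. $(\omega,x)$. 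I would invoke this as a black box (Furstenberg / Ledrappier / Baxendale invariance principle, as used in \cite{BCZG-23}); Hypotheses (\ref{hypothese H_1})--(\ref{hypothese H_2}) give the required integrability of $\log\|D_xf_\omega\|^{\pm1}$. The task then reduces to showing that conditions \ref{property of positivity 1}--\ref{property of positivity 3} rule out any such invariant structure. Iterating the cocycle, the invariance holds for $\Psi_x(\underline{\omega}^n)$ with $n$-step transitions.

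Next I use \ref{property of positivity 1}--\ref{property of positivity 2}. By the submersion property and the implicit function theorem, near $\underline{\omega}_*^n$ the parameter space locally fibers as $(y,\eta)$: here $y=\Phi_{x}(\underline{\omega}^n)$ ranges over an open set $U\ni\Phi_{x_*}(\underline{\omega}_*^n)$, and $\eta$ parametrizes the fibers $\{\Phi_x=y\}$. This is uniform for $x$ in a neighbourhood of $x_*$, by continuity of $D\Phi$. Condition \ref{property of positivity 1} makes the density of $(\mathbb{P}_0)^n$ bounded below there, so by Fubini the relation
\begin{equation*}
    \Psi_x(\underline{\omega}^n)_*\mathcal{S}_x=\mathcal{S}_{y}
\end{equation*}
holds for a.e. $x$ near $x_*$ and a.e. fiber parameter, with $y=\Phi_x(\underline{\omega}^n)$. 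Fix such an $x$ and a typical $y$. Then for a.e. $\eta$ in the fiber over $y$, $\Psi_x(\eta)$ maps $\mathcal{S}_x$ to the same $\mathcal{S}_y$. Hence the image of the fiber under $\Psi_x$ lies in the coset
\begin{equation*}
    g\cdot\mathrm{Stab}(\mathcal{S}_x),\qquad g:=\Psi_x(\eta_0)\ \text{for a fixed }\eta_0.
\end{equation*}
This stabilizer is a closed subgroup of $SL_2(\mathbb{R})$ of dimension at most $1$: conjugate to $SO(2)$ in the conformal case, and contained in a finite extension of a diagonal or triangular group in the line case. By continuity the inclusion holds for every $\eta$ in the fiber.

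Condition \ref{property of positivity 3} now gives the contradiction. The fiber through $\underline{\omega}_*^n$ has tangent space $\ker D\Phi_{x_*}$, and $D\Psi_{x_*}$ maps this kernel onto the $3$-dimensional $T\,SL_2(\mathbb{R})$. Surjectivity is an open condition, so it persists for $x$ near $x_*$ and points near $\underline{\omega}_*^n$ on nearby fibers. Therefore $\Psi_x$ restricted to a fiber has image containing an open subset of $SL_2(\mathbb{R})$. Such an image cannot lie in a coset of a subgroup of dimension $\leq1$. So no invariant structure exists, and $\lambda_1>0$.

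I expect the main obstacle to be the measure-theoretic passage from a.e. invariance to invariance on entire fibers. The structure $\mathcal{S}$ is only measurable, so I must ensure the chosen $x$ and $y$ are simultaneously typical. My plan is to use the local product decomposition and Fubini on the positive-density region from \ref{property of positivity 1}. Continuity of $\eta\mapsto\Psi_x(\eta)$ and closedness of the stabilizer coset then upgrade the a.e.-in-$\eta$ statement to one for all $\eta$.
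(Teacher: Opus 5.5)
Your argument is correct and is essentially the standard proof in \cite{BCZG-23}, which the paper cites without reproducing. That proof runs as follows: the Furstenberg--Ledrappier dichotomy gives an invariant conformal structure or finite union of lines; properties \ref{property of positivity 2}--\ref{property of positivity 3}, which together say that $(\Phi_{x},\Psi_{x})$ is a submersion at $\underline{\omega}_*^n$, combined with the density bound \ref{property of positivity 1}, then force $\Psi_x$ on a typical fiber to have image with nonempty interior in $SL_2(\mathbb{R})$, and no coset of a proper closed subgroup can contain such an image.

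Two small fixes. First, the stabilizer of a single invariant line is a Borel subgroup of dimension $2$, not at most $1$; this is harmless, since you only use that the stabilizer is a proper closed subgroup, so its cosets have empty interior. Second, your ``a.e.\ $x$ near $x_*$'' step needs $\pi$ to charge every neighbourhood of $x_*$. This holds here because the standing hypothesis (\ref{hypothese H_3}) of that section makes $\pi=\operatorname{Leb}$, but you should say so.
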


Building on Lemma \ref{lem:positivity}, the subsequent lemma provides a sufficient condition for establishing the Lyapunov drift inequality. This result originates from \cite{BCZG-23} and is summarized in \cite{coti-26}.
\begin{lemma}[{\hspace{-0.05em}\cite[Lemma 2.9]{coti-26}, \textit{Lyapunov drift condition}}]\label{lem:drift condition}
    Assume that both the one-point process and the projective process are geometrically ergodic, and that the top Lyapunov exponent $\lambda_1>0$. Then there exist constants $p>0$, $\gamma\in (0,1)$, and a function $V\in L^1\left(\left(\mathbb{T}^2\times\mathbb{T}^2\right)\setminus\Delta\right)$ with $V\geq 1$, such that
    \begin{equation}
        \operatorname{dist}_{\mathbb{T}^2}(x,y)^{-p}\lesssim V(x,y)\lesssim\operatorname{dist}_{\mathbb{T}^2}(x,y)^{-p}.
    \end{equation}Furthermore, the drift condition
    \begin{equation}
        P^{(2)}V(x,y)<\gamma V(x,y)
    \end{equation}holds for all $(x,y)$ in the complement of a compact set $E\subsetneq \left(\mathbb{T}^2\times\mathbb{T}^2\right)\setminus\Delta$.
\end{lemma}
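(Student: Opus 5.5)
The plan is the construction of \cite{BCZG-23}, as summarized in \cite{coti-26}. We build $V$ as a perturbation of $\operatorname{dist}(x,y)^{-p}$ using an eigenfunction of a twisted projective Markov operator. For $p\in\mathbb{R}$, define on $C(\mathbb{T}^2\times\mathbb{S}^1)$
\begin{equation*}
    \hat P_p\psi(x,v):=\mathbb{E}\!\left[\left|D_xf_\omega v\right|^{-p}\psi\!\left(f_\omega(x),\tfrac{D_xf_\omega v}{|D_xf_\omega v|}\right)\right].
\end{equation*}
Geometric ergodicity of the projective process gives a spectral gap for $\hat P_0=\hat P$ on a suitable function space. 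Hypothesis (\ref{hypothese H_2}) makes $p\mapsto\hat P_p$ analytic in operator norm near $p=0$. Kato perturbation theory then gives, for small $|p|$, a simple dominant eigenvalue $e^{-\Lambda(p)}$ with positive eigenfunction $\psi_p$ and $\psi_p\to 1$ uniformly.

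The next step is the sign of $\Lambda$. We have $\Lambda(0)=0$, and differentiating the eigenvalue equation while integrating against the stationary measure of the projective process gives $\Lambda'(0)=\lambda_1$, via the Furstenberg--Khasminskii formula. By assumption $\lambda_1>0$. So for small $p>0$ we get $e^{-\Lambda(p)}<1$, and $\psi_p$ is bounded above and below by positive constants.

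We then set
\begin{equation*}
    V(x,y):=\chi(\operatorname{dist}(x,y))\operatorname{dist}(x,y)^{-p}\psi_p\!\left(x,\tfrac{\tilde y-\tilde x}{|\tilde y-\tilde x|}\right)+1,
\end{equation*}
where $\chi$ is a cutoff equal to $1$ near $0$. The two-sided comparison with $\operatorname{dist}^{-p}$ and the bound $V\ge1$ are immediate. Integrability $V\in L^1$ holds because $p<d$.

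The drift inequality comes from linearization near the diagonal, as in Section \ref{subsec: general construction of Lyapunov function}. For $\operatorname{dist}(x,y)=\varepsilon$ small, Taylor expansion with the $C^2$ bounds gives
\begin{equation*}
    f_\omega(y)-f_\omega(x)=\varepsilon D_xf_\omega v+O(\varepsilon^2).
\end{equation*}
Hence
\begin{equation*}
    P^{(2)}V(x,y)=\varepsilon^{-p}\hat P_p\psi_p(x,v)\,(1+O(\varepsilon))+O(1)=\varepsilon^{-p}\!\left(e^{-\Lambda(p)}+O(\varepsilon)\right)\psi_p(x,v)+O(1).
\end{equation*}
This uses the Lipschitz continuity of $\psi_p$ in $v$, and the uniform bounds on $|D_xf_\omega v|^{-1}$ from (\ref{hypothese H_2}) to control the relative error. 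Choosing $\gamma\in(e^{-\Lambda(p)},1)$ and $\varepsilon_0$ small gives $P^{(2)}V<\gamma V$ on $\{\operatorname{dist}<\varepsilon_0\}$.

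Away from the diagonal the drift must come from elsewhere, and this is the main obstacle. With the cutoff, $V$ is bounded there, so the inequality $P^{(2)}V<\gamma V$ is only needed outside a compact set. Since $(\mathbb{T}^2\times\mathbb{T}^2)\setminus\Delta$ has only the diagonal as its "infinity", we may take $E=\{\operatorname{dist}\ge\varepsilon_0\}$, which is compact. The delicate part is therefore the regularity of $\psi_p$ required in the Taylor step. It relies on the spectral gap holding in a Hölder or Lipschitz norm rather than only in a weighted sup norm. I would obtain this from the geometric ergodicity assumption, upgraded by the smoothing of the noise, following \cite{BCZG-23}.
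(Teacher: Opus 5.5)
Your proposal is correct and is essentially the construction behind this lemma. The paper gives no proof of its own; it cites \cite{coti-26} and \cite{BCZG-23}, which follow \cite{BBPS-22A} exactly as you do: the twisted projective operator $\hat P_p$, Kato perturbation of the simple top eigenvalue, $\Lambda'(0)=\lambda_1$ via the Furstenberg--Khasminskii formula, $V=\chi\operatorname{dist}^{-p}\psi_p+1$, and $E=\{\operatorname{dist}\ge\varepsilon_0\}$.

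The one correction is that the step you flag as delicate is not delicate, and no H\"older/Lipschitz spectral gap or noise-smoothing upgrade is needed. Work on $C(\mathbb{T}^2\times\mathbb{S}^1)$, where $\hat P$ is Feller and uniform geometric ergodicity already gives the gap. Then $\psi_p$ is continuous, hence uniformly continuous on a compact space. The linearization only needs $P^{(2)}V(x,y)/(\varepsilon^{-p}\psi_p(x,v))-e^{-\Lambda(p)}$ to be $o(1)$ uniformly as $\varepsilon\to0$, not $O(\varepsilon)$, and the $+1$ in $V$ is absorbed because $\varepsilon^{-p}\to\infty$. This matters because the upgrade you propose would be hard to justify here, since the Chirikov noise enters degenerately.
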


\subsection{Small set and aperiodicity conditions}\label{subsec: small set and aperiodicity}
By applying Lemma \ref{lem: small set} and  \ref{lem: aperiodicity}, we now check the existence of small sets and aperiodicity for each of the one-point, projective and two-point Markov processes.
\subsubsection{The one-point process}
For $n=1$, the map $\Phi_x:\Omega_0\rightarrow\mathbb{T}^2$ takes the explicit form
\begin{equation*}
    \Phi_x(\omega)=\begin{pmatrix}
        x_1+K\sin(x_2-\omega^1)\\
        x_2+x_1+K\sin(x_2-\omega^1)-\omega^2
    \end{pmatrix}.
\end{equation*}Selecting $x_*=(0,0)$ and $\omega_*=(0,0)$ in Lemma \ref{lem: small set}, we obtain the Jacobian
\begin{equation*}
    D_{\omega_*}\Phi_x=\begin{pmatrix}
        -K & 0\\
        -K & -1
    \end{pmatrix},
\end{equation*}that has full rank. Therefore, the existence of small set for one-point process follows from Lemma~\ref{lem: small set}. Moreover, the chosen point $x_*$ is a fixed point for $f_{\omega_*}$, so that aperiodicity is given by Lemma \ref{lem: aperiodicity}.

\subsubsection{The projective process}

For any $(x,v)\in\mathbb{T}^2\times\mathbb{S}^1$, the mapping $\Phi_{x,v}:\Omega_0^n\rightarrow\mathbb{T}^2\times\mathbb{S}^1$ is defined by 
\begin{equation*}
    \Phi_{(x,v)}(\underline{\omega}^n)=\left(f_{\underline{\omega}^n}(x),\frac{D_xf_{\underline{\omega}^n}v}{\left|D_xf_{\underline{\omega}^n}v\right|}\right),
\end{equation*}where $f_{\underline{\omega}^n}:=f_{\omega_n}\circ\cdots\circ f_{\omega_1}$. In particular, the one-step Jacobian matrix $D_xf_\omega$ admits the following explicit form 
\begin{equation*}
    D_xf_{\omega}=\begin{pmatrix}
        1 & K\cos(x_2-\omega^1)\\
        1 & K\cos(x_2-\omega^1)+1
    \end{pmatrix}.
\end{equation*}To apply Lemma \eqref{lem: small set}, it suffices to consider the case $n=2$. We verify the submersion condition for $\Phi_{x_*,v_*}(\underline{\omega}^2)$ at the point $\underline{\omega}_*^2$, where 
\begin{equation*}
    x_*=(0,0),\quad v_*=(1,0),\quad \underline{\omega}_*^2=(\omega_{*,1},\omega_{*,2})=\left((0,0),(\pi/2,0)\right).
\end{equation*}A direct computation yields
\begin{equation*}
    D_{\underline{\omega}^2}\Phi_{(x_*,v_*)}\left(\underline{\omega}_*^2\right)=\begin{pmatrix}
        -K & 0 & 0 & 0\\
        -2K & -1 & 0 & -1\\
        \frac{-2K^2}{5\sqrt{5}} & \frac{-2K}{5\sqrt{5}} & \frac{-2K}{5\sqrt{5}} & 0\\[0.5em]
        \frac{K^2}{5\sqrt{5}} & \frac{K}{5\sqrt{5}} & \frac{K}{5\sqrt{5}} & 0
    \end{pmatrix}.
\end{equation*}It is straightforward to verify that this matrix has rank $3$, which equals $\dim(\mathbb{T}^2\times\mathbb{S}^1)$. Hence
$\Phi_{(x_*,v_*)}$ is a submersion at $\underline{\omega}_*^{\,2}$, and
Lemma~\ref{lem: small set} yields the desired small-set condition.

\vspace{0.3em}
To verify aperiodicity of the projective process, we can select 
\begin{equation*}
    x_*=(0,0),\quad v_*=\frac{1}{\sqrt{\frac{K^2+2K+2-K\sqrt{K^2+4K}}{2}}}\left(\frac{-K+\sqrt{K^2+4K}}{2},1\right), \quad \omega_*=(0,0).
\end{equation*}A direct computation shows that
\begin{equation*}
    \left(f_{\omega_*}(x_*),\frac{D_{x_*}f_{\omega_*}v}{\left|D_{x_*}f_{\omega_*}v\right|}\right)=(x_*,v_*),
\end{equation*}which satisfies the hypotheses of Lemma \ref{lem: aperiodicity}.

\subsubsection{The two-point process}
We apply Lemma \ref{lem: small set} with $n=3$, since the case $n=2$ is degenerate and does not satisfy the submersion condition. We choose
\begin{equation*}
    x_*=(0,0), \quad y_*=(\pi,\pi),\quad \underline{\omega}_*^3=\left((0,0),(0,0),(0,0)\right),
\end{equation*}so that the corresponding Jacobian
\begin{equation*}
    D_{\underline{\omega}_*^3}\Phi_{(x_*,y_*)}=\begin{pmatrix}
        -K(K^2+3K+1) & -K(K+2) & -K(K+1) & -K & -K & 0\\
        -K(K^2+4K+3) & -K^2-3K-1 & -K(K+2) & -K-1 & -K & -1\\
        -K(K^2+K-1) & K^2 & (K-1)K & K & K & 0\\
        -K(K^2-3) & K^2-K-1 & (K-2)K & K-1 & K & -1
    \end{pmatrix},
\end{equation*}has full rank 4 as desired. 

For Lemma \ref{lem: aperiodicity}, we check directly that 
\begin{equation*}
    x_*=(0,0),\quad y_*=(0,\pi),\quad \omega_*=(0,0)
\end{equation*}has the property that $\left(f_{\omega_*}(x_*),f_{\omega_*}(y_*)\right)=(x_*,y_*)$.
\subsection{Positivity of Lyapunov exponent}\label{subsec: Positive Lyapunov exponent}
In the setting of the randomized Chirikov map, property \ref{property of positivity 1} in Lemma \ref{lem:positivity} is automatically satisfied. Consequently, it suffices to verify that there exist $n\geq 1$ and $\underline{\omega}_*^n\in\Omega_0^n$ for which properties \ref{property of positivity 2} and \ref{property of positivity 3} hold.

We observe that if property \ref{property of positivity 2} holds at $(\underline{\omega}_*^n,x_*)$, the kernel of $D_{\underline{\omega}_*^n}\Phi_{x_*}$ has dimension at most $2n-2$. Given that the target space of $D_{\underline{\omega}_*^n}\Psi_{x_*}$ is three-dimensional, property \ref{property of positivity 3} requires the dimension of the kernel be at least 3 (i.e., $2n-2\geq 3$), which implies $n\geq 3$. However, a direct computation shows that properties \ref{property of positivity 2} and \ref{property of positivity 3} cannot be satisfied simultaneously when
$n=3$. Therefore, we will verify these properties for $n=4$.

To start, let 
\begin{equation*}
    \begin{aligned}
        x_*&=\left(\frac{\pi}{2},\pi\right),\\[0.3em]
        \underline{\omega}_*^4&=\left(\left(0,\frac{\pi}{2}-1\right),\left(\frac{\pi}{2}+1,\frac{\pi}{2}+K\right),\left(\frac{\pi}{2}+1,\frac{\pi}{2}+2K\right),\left(\frac{\pi}{2}+1,\frac{\pi}{2}+3K\right)\right),
    \end{aligned}
\end{equation*}where each phase is understood modulo $2\pi$, and hence lies in $[0,2\pi)$. We obtain the matrix 
\begin{equation*}
    D_{\underline{\omega}_*^4}\Phi_{x_*}=\begin{pmatrix}
        K & 0 & 0 & 0 & 0 & 0 & 0 & 0\\
        4K & -1 & 0 & -1 & 0 & -1 & 0 & -1
    \end{pmatrix}.
\end{equation*}
Since $\mathrm{rank}\left(D_{\underline{\omega}_*^4}\Phi_{x_*}\right)=2$, property \ref{property of positivity 2} holds. Therefore $\ker D_{\underline{\omega}_*^4}\Phi_{x_*}$ is 6-dimensional and a basis is given by the columns of the following matrix
\begin{equation*}
    \mathcal{K}=\begin{pmatrix}
        0 & 0 & 0 & 0 & 0 & 0\\
        0 & -1 & 0 & -1 & 0 & -1\\
        1 & 0 & 0 & 0 & 0 & 0\\
        0 & 1 & 0 & 0 & 0 & 0\\
        0 & 0 & 1 & 0 & 0 & 0\\
        0 & 0 & 0 & 1 & 0 & 0\\
        0 & 0 & 0 & 0 & 1 & 0\\
        0 & 0 & 0 & 0 & 0 & 1
    \end{pmatrix}.
\end{equation*}

\vspace{0.3em}
Moreover, we identify the space of $2\times 2$ real matrices with $\mathbb{R}^4$ via the standard parametrization
\begin{equation*}
    \begin{pmatrix}
        a & b\\
        c & d
    \end{pmatrix}\mapsto \begin{pmatrix}
        a\\
        b\\
        c\\
        d
    \end{pmatrix}.
\end{equation*}Under this identification, we obtain
\begin{equation*}
    \mathcal{M}:=D_{\underline{\omega}_*^{\,4}}\Psi_{x_*}=\scalebox{0.75}{$
    \left(
    \begin{array}{cccccccc}
        -14K^2 & 6K & K & 5K & 2K & 3K & 3K & 0\\
        2K^2(7K-3) & 3K(1-2K) & K(1-K) & K(2-5K) & K(1-2K) & K(1-3K) & K(1-3K) & 0\\
        -20K^2 & 10K & 3K & 7K & 4K & 3K & 3K & 0\\
        10K^2(2K-1) & 2K(3-5K) & 3K(1-K) & K(3-7K) & 2K(1-2K) & K(1-3K) & K(1-3K) & 0
    \end{array}
    \right)$}.  
\end{equation*}To prove that $\mathcal{M}$ maps $\ker D_{\underline{\omega}_*^4}\Phi_{x_*}$ surjectively onto $T_{\Psi_{x_*}(\underline{\omega}_*^4)}SL_2(\mathbb{R})$, it suffices to verify that 
\begin{equation*}
    \mathcal{M}\mathcal{K}=\scalebox{0.85}{$
    \left(
    \begin{array}{cccccccc}
        K & -K & 2K & -3K & 3K & -6K\\
        K(1-K) & K(K-1) & K(1-2K) & K(3K-2) & K(1-3K) & 3K(2K-1)\\
        3K & -3K & 4K & -7K & 3K & -10K\\
        3K(1-K) & 3K(K-1) & 2K(1-2K) & K(7K-5) & K(1-3K) & 2K(5K-3)
    \end{array}
    \right)$}  
\end{equation*}
has rank 3. Therefore, properties \ref{property of positivity 1}-\ref{property of positivity 3} hold at the chosen point $(x_*,\underline{\omega}_*^4)$, and Lemma \ref{lem:positivity} yields the positivity of the top Lyapunov exponent $\lambda_1$.
\begin{rmk}
    We have verified the preceding computations using the computer algebra system Mathematica.
\end{rmk}

\subsection{Topological irreducibility}\label{subsec: Topological irreducibility}
In this section, we prove topological irreducibility via a controllability
argument. More precisely, given any initial point and a nonempty open set, we show that the dynamics can reach the target open set by choosing an appropriate finite sequence of random phases $\{\omega_i\}_{i=1}^n$.

\subsubsection{The one-point process}
We begin by establishing exact controllability for the one-point dynamics generated by the RDS $f_{\underline{\omega}}^{\,n}$.
\begin{proposition}
    Given $x,y\in\mathbb{T}^2$, there exists $\omega\in\Omega_0$ such that 
    \begin{equation*}
        f_\omega(x)=y,
    \end{equation*}namely the one-point process is exactly controllable.
\end{proposition}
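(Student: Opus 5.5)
Since $f_\omega=f^V_\omega\circ f^H_\omega$, the statement reduces to solving two scalar equations in sequence. Write $x=(x_1,x_2)$, $y=(y_1,y_2)$ and set $\omega=(\omega^1,\omega^2)$. Then
\begin{equation*}
    f_\omega(x)=\begin{pmatrix} x_1+K\sin(x_2-\omega^1)\\ x_2+x_1+K\sin(x_2-\omega^1)-\omega^2\end{pmatrix},
\end{equation*}
and the phase $\omega^1$ enters only through the horizontal kick, while $\omega^2$ enters only as a rigid vertical translation. So I would first choose $\omega^1$ to fix the first coordinate, and then choose $\omega^2$ to fix the second.

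For the first coordinate we need $K\sin(x_2-\omega^1)\equiv y_1-x_1 \pmod{2\pi}$. Lift $y_1-x_1$ to a representative $\delta\in(-\pi,\pi]$. As $\omega^1$ ranges over $[0,2\pi)$, the quantity $K\sin(x_2-\omega^1)$ takes every value in $[-K,K]$. Whenever $K\ge\pi$, and in particular in the regime $K\ge 4\pi$ of Theorem~\ref{thm: exponential mixing}, this interval contains $\delta$, so some $\omega^1$ satisfies $\sin(x_2-\omega^1)=\delta/K$. This gives $x_1+K\sin(x_2-\omega^1)\equiv y_1$.

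For the second coordinate, with $\omega^1$ now fixed, the second component equals $x_2+\tilde y_1-\omega^2$, where $\tilde y_1:=x_1+K\sin(x_2-\omega^1)\equiv y_1$. Setting $\omega^2:=x_2+\tilde y_1-y_2 \pmod{2\pi}$, taken in $[0,2\pi)$, gives exactly $y_2$. Hence $f_\omega(x)=y$.

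The argument has no real obstacle. The only point to check is the surjectivity of the kick, which requires the amplitude $K$ to cover a half-period, i.e.\ $K\ge\pi$. This is guaranteed under the standing assumption $K\ge 4\pi$ of this section, and I would state that assumption explicitly in the proof.
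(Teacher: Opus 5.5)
Your proof is correct and follows the paper's argument: choose $\omega^1$ with $\sin(x_2-\omega^1)=(y_1-x_1)/K$, then set $\omega^2\equiv x_2+y_1-y_2 \pmod{2\pi}$. Your remark that the first equation is solvable only when $K\ge\pi$ (taking the representative of $y_1-x_1$ in $(-\pi,\pi]$) is a useful clarification, since the paper leaves this implicit under its standing assumption $K\ge 4\pi$.
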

\begin{proof}
    Note that one can choose $\omega=(\omega^1,\omega^2)$ such that 
    \begin{equation*}
        \sin\left(x_{0,2}-\omega^1\right)=\frac{y_{0,1}-x_{0,1}}{K},\qquad \omega^2=x_{0,2}+y_{0,1}-y_{0,2} \pmod{2\pi}.
    \end{equation*}With this choice, we obtain
    \begin{align*}
        x_{1,1}&=x_{0,1}+K\sin\left(x_{0,2}-\omega^1\right)=y_{0,1},\\
        x_{1,2}&=x_{0,2}+x_{0,1}+K\sin\left(x_{0,2}-\omega^1\right)-\omega^2=y_{0,2}.
    \end{align*}Therefore $f_\omega(x)=y$, as claimed.
\end{proof}

\subsubsection{The projective process}
We define the projective dynamics $\hat{f}_\omega:\mathbb{T}^2\times\mathbb{S}^1\rightarrow\mathbb{T}^2\times\mathbb{S}^1$ by
\begin{equation*}
    \hat{f}_{\omega}(x,v):=\left(f_\omega(x),\frac{D_xf_{\omega}v}{\left|D_xf_\omega v\right|}\right),
\end{equation*}For $\underline{\omega}^n=(\omega_1,\ldots,\omega_n)\in\Omega_0^n$, we denote its $n$-step iterate by $\hat{f}_{\underline{\omega}^n}:=\hat{f}_{\omega_n}\circ\cdots\circ\hat{f}_{\omega_1}$. Recall that the associated projective Markov transition kernel is given by 
\begin{equation*}
    \hat{P}\left((x,v),\hat{A}\right)=\mathbb{P}\left(\hat{f}_\omega(x,v)\in\hat{A}\right),\qquad \hat{A}\in\mathcal{B}\left(\mathbb{T}^2\times\mathbb{S}^1\right).
\end{equation*}The following proposition shows that the RDS $\hat{f}_\omega$ is approximately
controllable, which in turn implies topological irreducibility of $\hat{P}$.
\begin{proposition}\label{prop:controllability of projective}
    For any $(x,v),(x_*,v_*)\in \mathbb{T}^2\times\mathbb{S}^1$ and $\epsilon>0$, there exist $N\in\mathbb{N}$ and $\underline{\omega}^N=(\omega_1,\ldots,\omega_N)$ such that
    \begin{equation*}
        \operatorname{dist}_{\mathbb{T}^2\times\mathbb{S}^1}\!\left(\hat{f}_{\underline{\omega}^N}(x,v),(x_*,v_*)\right)<\epsilon.
    \end{equation*}
\end{proposition}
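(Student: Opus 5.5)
The idea is to separate the position from the direction and reduce the statement to exact controllability of finitely many angles. For one step, $D_xf_\omega=J(\theta)$ with $\theta=x_2-\omega^1$. The position update is
\[
x_{1}\mapsto x_1+K\sin\theta,\qquad x_2\mapsto x_2+x_1+K\sin\theta-\omega^2 .
\]
Given the current $x_2$, the angle $\theta_i$ can be prescribed freely through $\omega_i^1$. The second phase $\omega_i^2$ does not enter the Jacobian at all, so it only shifts $x_{i,2}$.

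So after $N$ steps with prescribed angles $\theta_1,\dots,\theta_N$ we have
\[
x_{N,1}=x_1+K\sum_{i=1}^N\sin\theta_i,\qquad v_N\propto J(\theta_N)\cdots J(\theta_1)v ,
\]
and $x_{N,2}$ can be set exactly to $x_{*,2}$ by choosing $\omega_N^2$. Hence it suffices to find $N$ and angles $\theta_1,\dots,\theta_N$ with
\begin{equation*}
J(\theta_N)\cdots J(\theta_1)v\in\mathbb{R}_{>0}v_*,\qquad K\sum_{i=1}^N\sin\theta_i\equiv x_{*,1}-x_1 \pmod{2\pi}.
\end{equation*}
This would give exact controllability, which is stronger than the approximate statement.

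To handle the direction, write $J(\theta)=V\,H(K\cos\theta)$ with $V=\begin{pmatrix}1&0\\1&1\end{pmatrix}$ and $H(c)=\begin{pmatrix}1&c\\0&1\end{pmatrix}$. The two angles $\theta$ and $2\pi-\theta$ give the same Jacobian but opposite horizontal kicks $\pm K|\sin\theta|$. The first step is a two-step transitivity lemma for the projective action: for every $u\in\mathbb{S}^1$ there exist $c_{N-1},c_N\in[-K,K]$ with $VH(c_N)VH(c_{N-1})u\in\mathbb{R}_{>0}v_*$. I would prove it by hand. Since $K\ge 4\pi$, the projective orbit of the shear $H(c)$ with $|c|\le K$ covers a large arc of directions (all directions except a small neighbourhood of $\pm e_1$). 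Applying $V$ then moves the excluded neighbourhood, so two shears cover everything. The sign $\mathbb{R}_{>0}$ versus $\mathbb{R}_{<0}$ is handled by one extra step, or by allowing $v_*$ up to sign and then fixing orientation with an extra shear.

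Next I fix the position. I take $N=3$ and let the first angle $\theta_1=t$ be a free parameter. I then choose $\theta_2,\theta_3$ through the lemma applied to $u=J(t)v/|J(t)v|$. The remaining freedom is the choice of the first sign of $\sin\theta_1$ and of the signs of $\sin\theta_2,\sin\theta_3$. The total kick $K(\sin t+\sin\theta_2+\sin\theta_3)$ must hit every residue mod $2\pi$.

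The cleanest route is to use the sign flip $\theta\mapsto 2\pi-\theta$, which preserves all Jacobians. Flipping only $\theta_1$ changes the kick by $2K\sin t$ without affecting the direction constraint. So I would instead take $N=4$ and treat the first step as a pure position adjustment. I choose $\theta_1$ with a prescribed $\cos\theta_1$, so the Jacobian is a fixed matrix, and the two sign choices already give kicks differing by $2K|\sin\theta_1|$. More flexibly, I let $\theta_1$ range over a small arc near $\pi/2$. There $\cos\theta_1$ varies in a small interval around $0$ while $K\sin\theta_1$ varies over an interval whose length grows with $K$.

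Here is the main obstacle. Moving $\theta_1$ also changes the direction $u$ fed into the two-step lemma, and hence changes $\theta_3,\theta_4$ and their kicks. I would make the lemma quantitative and continuous: I would pick $c_3,c_4$ depending continuously on $u$ in a neighbourhood, keeping them away from $\pm K$ so that $\sin\theta_3,\sin\theta_4$ stay bounded away from $0$ with a fixed sign. Then the total kick
\[
g(t)=K\sin t+K\sin\theta_3(t)+K\sin\theta_4(t)
\]
is a continuous function of $t$. The remaining task is to show that its range has length at least $2\pi$, using that the first term dominates the variation when $K\ge4\pi$, and then conclude by the intermediate value theorem. If this domination is delicate, the fallback is to insert extra steps with $\theta=\pm\pi/2$, where $J=V$ and the kick is $\pm K$. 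Their fixed Jacobians can be absorbed into the target of the two-step lemma by replacing $v_*$ with $V^{-j}v_*$.
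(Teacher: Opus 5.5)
\textbf{Verdict: genuine gap.} Your reduction is sound: $\omega_i^1$ prescribes $\theta_i=x_{i-1,2}-\omega_i^1$ freely, and $\omega_i^2$ never enters $D_xf_\omega$. So the task is exactly to choose $\theta_1,\dots,\theta_N$ with $J(\theta_N)\cdots J(\theta_1)v\in\mathbb{R}_{>0}v_*$ and $K\sum_i\sin\theta_i\equiv x_{*,1}-x_1\pmod{2\pi}$. However, the two-step transitivity lemma you build on is false. In the coordinate $s=u_1/u_2\in\mathbb{R}\cup\{\infty\}$, $H(c)$ acts by $s\mapsto s+c$ and $V$ by $s\mapsto s/(s+1)$. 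Since $H(c)e_1=e_1$, the $H(c)$-orbit of $u$ is only the window $[s-K,s+K]$. This is ``all directions except a neighbourhood of $\pm e_1$'' only when $u$ is close to $\pm e_2$. For $u=e_1$ you get $VH(c_2)VH(c_1)e_1\parallel(1+c_2,\,2+c_2)$, whose $s$-value never lies in $\bigl(\frac{K+1}{K+2},\frac{K-1}{K-2}\bigr)$. Hence directions near $(1,1)/\sqrt2$ are unreachable in two steps. A preliminary third step repairs this. Orientation is also not fixed by ``one extra step'': a shear cannot reverse a vector, and $VH(c)$ reverses only its own eigenvectors, which requires $c\le-4$. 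So you must first steer onto such an eigendirection, or otherwise show the lifted reachable set covers both preimages in $\mathbb{S}^1$.

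The more serious gap is the position constraint, which is exactly the step you leave as ``the remaining task''. The mechanism you propose for it does not work. The Jacobian sees $c_1=K\cos\theta_1$, not $\cos\theta_1$, and $c_1^2+(K\sin\theta_1)^2=K^2$. Near $\theta_1=\pi/2$ the kick is stationary, so making it sweep an interval of length $2\pi$ forces $|c_1|$ up to $\sqrt{4\pi K-4\pi^2}\ge 2\sqrt3\,\pi$. Then $u=J(\theta_1)v$ moves across a large part of the projective line. Consequently $\theta_3(t),\theta_4(t)$ and their kicks, each of size up to $K$, change by comparable amounts, and there is no reason for the first term to dominate; even a continuous selection over such a range is unclear. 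Your fallback, inserting $\theta=\pm\pi/2$ steps, changes the total kick only by integer multiples of $K$. Modulo $2\pi$ these form a finite set when $K/\pi\in\mathbb{Q}$, so in that case they cannot even approximate every residue.

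What is missing is a genuine decoupling of position from direction. The paper achieves it in three moves:
\begin{itemize}
\item It steers backwards with $g_\omega=f_\omega^{-1}$ from $(x_*,v_*)$ to some $(\bar x,(0,1))$.
\item Forwards, it uses only steps with $\sin(x_{i-1,2}-\omega_i^1)=1$. Their Jacobian $V$ drives every vector with positive first component towards $(0,1)$, while the position merely rotates by $K$. This rotation is dense if $K/\pi\notin\mathbb{Q}$; if $K/\pi=q/p$, it returns exactly after $2p$ steps following an exact first step.
\item The backward and forward pieces are glued with a uniform Lipschitz bound.
\end{itemize}
If you want to keep exact control, a workable substitute exists. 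Fix $x_1$ exactly in the first step, then steer only with kick-cancelling pairs $(\theta,2\pi-\theta)$, whose product $(VH(c))^2$ has zero net kick. For $c\in(-4,0)$ these matrices are elliptic, and an intermediate-value (winding) argument in $c$ for a high power $(VH(c))^{2m}$ gives the exact direction, sign included.
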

The following auxiliary result is a key ingredient in the argument.
\begin{lemma}\label{lem:projective controllability of bar_x}
    Given any $(x,v)\in\mathbb{T}^2\times\mathbb{S}^1$, $\bar{x}\in\mathbb{T}^2$ and $\epsilon>0$, there exists $N_1\in\mathbb{N}$ and $\underline{\omega}^{N_1}\in\Omega_0^{N_1}$ such that 
    \begin{equation*}
        \operatorname{dist}_{\mathbb{T}^2\times\mathbb{S}^1}\!\left(\hat{f}_{\underline{\omega}^{N_1}}(x,v),\left(\bar{x},(0,1)\right)\right)<\epsilon.
    \end{equation*}
\end{lemma}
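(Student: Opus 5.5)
The plan is to steer the direction to $(0,1)$ first, and then move the position while returning the direction to $(0,1)$ exactly. Throughout I use the one-step formulas. For $v=(v_1,v_2)$ and $c:=\cos(x_2-\omega^1)$, $s:=\sin(x_2-\omega^1)$, one has
\begin{equation*}
    D_xf_\omega v=\bigl(v_1+Kcv_2,\ v_1+v_2+Kcv_2\bigr),\qquad f_\omega(x)=\bigl(x_1+Ks,\ x_2+x_1+Ks-\omega^2\bigr).
\end{equation*}
Two facts are used repeatedly. First, choosing $\omega^1$ we may prescribe any $c\in[-1,1]$, and for $|c|<1$ we also pick the sign of $s=\pm\sqrt{1-c^2}$. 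Second, $\omega^2$ does not enter $D_xf_\omega$ and only shifts the second coordinate, so the vertical position can be set arbitrarily at every step. Consequently only the horizontal coordinate $x_{n,1}$ and the direction $v_n$ have to be controlled.

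\textbf{Step 1 (align the direction).} If $v_2\neq 0$ and $|v_1|\le K|v_2|$, I take $c=-v_1/(Kv_2)$. This gives $w=(0,v_2)$, so $v_1'=(0,\operatorname{sign} v_2)$. If $v_2=0$, one step with any $c$ gives $w=(v_1,v_1)$, and then the previous case applies. If $|v_1|>K|v_2|$, one step with $c=0$ gives $w=(v_1,v_1+v_2)$, and since $|v_2|<|v_1|/K$ the new second component is comparable to the first; with $K\ge 4\pi$ this puts us in the admissible case. If the result is $(0,-1)$, I flip the sign in two more steps. With $Kc=-\tfrac12$ the direction becomes $\propto(1,-1)$. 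From $(1,-1)$, taking $Kc=1$ gives $w=(0,-1)$ again, which is no good; instead I take $c<0$ with $w_1'=1-Kc>0$, $w_2'=-Kc>0$, and then one more aligning step lands on $(0,+1)$. Hence after at most about five steps we reach a state $(x',(0,1))$ with some $x'\in\mathbb{T}^2$.

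\textbf{Step 2 (two-step loop moving $x_1$ exactly).} Start from direction $(0,1)$. Take one step with parameter $c\in[0,1]$ and $s=+\sqrt{1-c^2}$. The direction becomes $\propto(Kc,1+Kc)$, whose second component is positive. Then take a second step with $c'=-c/(1+Kc)$, which satisfies $|c'|\le 1/K<1$, and with $s'=-\sqrt{1-c'^2}$. This returns the tangent vector to a positive multiple of $(0,1)$. The net horizontal displacement is
\begin{equation*}
    \delta(c)=K\Bigl(\sqrt{1-c^2}-\sqrt{1-c^2/(1+Kc)^2}\Bigr).
\end{equation*}
This is continuous in $c$, with $\delta(0)=0$ and $\delta(1)=-K\sqrt{1-(1+K)^{-2}}$. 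The latter has modulus greater than $2\pi$ because $K\ge 4\pi$. By the intermediate value theorem, $\delta$ attains every residue mod $2\pi$, in particular $\bar x_1-x'_1$. In the second step I choose $\omega^2$ so that the vertical coordinate equals $\bar x_2$. The resulting state is exactly $(\bar x,(0,1))$, so the lemma holds, even with $\epsilon=0$.

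\textbf{Main obstacle.} The delicate point is Step 1, namely the case analysis guaranteeing that $v_2\neq0$, $|v_1|\le K|v_2|$ and $v_2>0$ can all be achieved in boundedly many steps, because the sign of $v_2$ on $\mathbb{S}^1$ must be tracked. I would carry this out by the explicit small sequence above and check each inequality directly. The loop in Step 2 is the key idea, since it decouples position control from direction control. It avoids relying on rotation-number or density arguments like the $\sqrt2$ trick of Lemma~\ref{lem:two point controllability of bar_x}, which could fail if $K/2\pi$ were rational.
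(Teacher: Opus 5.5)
Your argument is correct, and it takes a genuinely different route from the paper's. The paper first places the point exactly at $\bar x$ in one step. It then iterates steps with $\cos(x_{i-1,2}-\omega_i^1)=0$, so the tangent map is the shear $(w_1,w_2)\mapsto(w_1,w_1+w_2)$. This drives the direction to $(0,1)$ only asymptotically, while translating $x_1$ by $K$ at every step. The position is then recovered by a case split: $K/\pi\in\mathbb{Q}$ (periodicity of $nK$ mod $2\pi$) versus $K/\pi\notin\mathbb{Q}$ (density of $nK$ mod $2\pi$). An extra $\cos=-1$ step repairs the sign when $w_{1,1}\le 0$.

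You decouple the two controls instead. You annihilate the first tangent component exactly in a bounded number of steps. You then use a direction-preserving two-step loop whose horizontal displacement $\delta(c)$ sweeps $[\delta(1),0]$, an interval of length greater than $2\pi$ when $K\ge 4\pi$. I checked that $(0,1)\mapsto(Kc,1+Kc)\mapsto(0,1+Kc)$ with $c'=-c/(1+Kc)$ and $|c'|\le 1/K$, and that the displacement formula is right.

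What your route buys is exact controllability in at most about seven steps. This holds uniformly in $(x,v)$ and independently of the arithmetic nature of $K$. That is the kind of uniform bound quantitative reachability arguments rely on (as in Lemma~\ref{lem: quantitative reachability} for the two-point chain). The paper's route is conceptually simpler, just a linear shear plus an equidistribution argument. However, it gives no uniform bound on $N_1$ and needs the rational/irrational dichotomy.

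There are a few minor points. As written, your sign flip from $(0,-1)$ takes three steps, not two; two suffice, e.g.\ $c=-1$ gives $(K,K-1)$, then align. In that flip you must also take $c$ bounded away from $0$ (e.g.\ $c=-1$), because the following aligning step requires $1-Kc\le -K^2c$.

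Your closing remark also misplaces the issue. In Lemma~\ref{lem:two point controllability of bar_x} the per-step displacement $2\sqrt2\pi$ is independent of $K$ by the choice of $\bar x_2$, so that argument does not fail for rational $K/2\pi$. It is the paper's proof of the present projective lemma that needs the case split.
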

\begin{proof}[Proof of Lemma \ref{lem:projective controllability of bar_x}]
   The proof is divided into two cases: $\frac{K}{\pi}\in\mathbb{Q}$ and $\frac{K}{\pi}\notin\mathbb{Q}$.

   \vspace{0.3em}
   Assume that $\frac{K}{\pi}=\frac{q}{p}\in\mathbb{Q}$. We first choose $\omega_1=(\omega_1^1,\omega_1^2)$ such that
   \begin{equation*}
        \sin\left(x_{0,2}-\omega_1^1\right)=\frac{\bar{x}_1-x_{0,1}}{K},\qquad \omega_1^2=x_{0,2}+\bar{x}_1-\bar{x}_2 \pmod{2\pi}.
   \end{equation*}This choice ensures that $x_1=\bar{x}\in\mathbb{T}^2$. For the moment assume that $w_{1,1}>0$ (hence $v_{1,1}>0$ as well). For each $i\geq 2$, we choose $\omega_i=(\omega_{i,1},\omega_{i,2})$ recursively 
   \begin{equation}\label{eq:omega-recursively}
       \sin\left(x_{i-1,2}-\omega_i^1\right)=1,\qquad \omega_i^2=x_{i-1,2}+x_{i,1}-\bar{x}_2,
   \end{equation}so that the Jacobian satisfies
   \begin{equation*}
       D_{x_{i-1}}f_{\omega_i}=\begin{pmatrix}
           1 & 0\\
           1 & 1
       \end{pmatrix}.
   \end{equation*}Consequently, we obtain
   \begin{equation}
       w_{i,1}=w_{i-1,1},\qquad w_{i,2}=w_{i,1}+w_{i-1,2}.\label{eq:w-recursion}
   \end{equation}Iterating \eqref{eq:w-recursion} yields, for all $n\geq 2$,
   \begin{equation*}
       w_{n,1}=w_{1,1},\qquad w_{n,2}=(n-1)w_{1,1}+w_{1,2}.
   \end{equation*}
   
   Since $w_{1,1}>0$, given any $k\in\mathbb{R}^+$ there exists $N\in\mathbb{N}$ such that for all $n\geq N$
   \begin{equation*}
       \frac{w_{n,2}}{w_{n,1}}=\frac{v_{n,2}}{v_{n,1}}>k.
   \end{equation*}Let us choose $k$ large enough so that
   \begin{equation*}
       \operatorname{dist}_{\mathbb{S}^1}\!\left(\frac{1}{\sqrt{k^2+1}}(1,k),(0,1)\right)<\epsilon.
   \end{equation*}Then, for all $n\geq N$, one has that
   \begin{equation*}
       \operatorname{dist}_{\mathbb{S}^1}\!\left((v_{n,1},v_{n,2}),(0,1)\right)\leq   \operatorname{dist}_{\mathbb{S}^1}\!\left(\frac{1}{\sqrt{k^2+1}}(1,k),(0,1)\right)<\epsilon.
   \end{equation*}
   Finally, set $N_1=1+2pN$. Since $\frac{K}{\pi}=\frac{q}{p}$, the horizontal phase evolution is periodic with period $2p$, and by construction we have
   \begin{equation*}
       x_{N_1}=\bar{x},\qquad  \operatorname{dist}_{\mathbb{S}^1}\!\left(v_{N_1},(0,1)\right)<\epsilon.
   \end{equation*}Therefore,
   \begin{equation*}
        \operatorname{dist}_{\mathbb{T}^2\times\mathbb{S}^1}\!\left(\hat{f}_{\underline{\omega}^{N_1}}(x,v),\left(\bar{x},(0,1)\right)\right)<\epsilon,
   \end{equation*}as desired. It remains to treat the case $w_{1,1}\leq 0$. We continue to choose the phases $\omega_i$ according to \eqref{eq:omega-recursively} up to some time $n_1$, where $n_1$ is chosen so that 
   \begin{equation*}
       w_{n_1,2}=(n_1-1)w_{1,1}+w_{1,2}<\frac{1}{K}w_{1,1}<0.
   \end{equation*}At the next step we select $\omega_{n_1+1}$ so that
   \begin{equation*}
       \cos\left(x_{n_1,2}-\omega_{n_1+1}^1\right)=-1,\qquad \omega_{n_1+1}^2=x_{n_1,2}+x_{n_1+1,1}-\bar{x}_2 \pmod{2\pi}.
   \end{equation*}A direct computation then shows that 
   \begin{equation*}
       w_{n_1+1,1}=w_{1,1}-Kw_{n_1,2}>0.
   \end{equation*}From time $n_1+1$, we may repeat the argument in the case $w_{1,1}>0$ to conclude the proof.

    \vspace{0.3em}
    We now turn to the case $\frac{K}{\pi}\notin\mathbb{Q}$. Since the set $\{nK \pmod{2\pi}: n\in\mathbb{N}\}$ is dense in $[0,2\pi)$, we can approximate any prescribed horizontal phase by a suitable iterate. Arguing as in the rational case, we may assume that $w_{0,1}>0$ (and hence $v_{0,1}>0$). For all $i\geq 1$, we choose $\omega_i$ same as \eqref{eq:omega-recursively}. Then there exists $N\in\mathbb{N}$ such that for every $n\ge N$,
    \begin{equation}\label{eq:v_n control}
        \operatorname{dist}_{\mathbb{S}^1}\!\left(v_n,(0,1)\right)<\frac{\epsilon}{2}.
    \end{equation}Moreover, along the same sequence we have
    \begin{equation*}
        x_{n,1}=x_{0,1}+nK, \qquad x_{n,2}=\bar{x}_2 \pmod{2\pi}.
    \end{equation*}

    By density of $\{nK \pmod{2\pi}:n\in\mathbb{N}\}$, we can choose an integer $N_1\ge N$ such that
    \begin{equation}\label{eq:x_N_1 control}
        \operatorname{dist}_{\mathbb{T}^1}\!\left(x_{N_1,1},\bar{x}_1\right)=\operatorname{dist}_{\mathbb{T}^1}\!\left(x_{0,1}+N_1K,\bar{x}_1\right)<\epsilon/2.
    \end{equation}Combining \eqref{eq:v_n control} and \eqref{eq:x_N_1 control}, we obtain 
   \begin{equation*}
        \operatorname{dist}_{\mathbb{T}^2\times\mathbb{S}^1}\!\left(\hat{f}_{\underline{\omega}^{N_1}}(x,v),\left(\bar{x},(0,1)\right)\right)<\epsilon.
   \end{equation*}
\end{proof}

\begin{proof}[Proof of Proposition \ref{prop:controllability of projective}]
    Fix $(x,v),(x_*,v_*)\in\mathbb{T}^2\times\mathbb{S}^1$ and $\epsilon>0$. Let $g_\omega:=f_\omega^{-1}$, which admits the explicit form
    \begin{equation}\label{eq: inverse map}
        g_\omega(x)=\begin{pmatrix}
            x_1-K\sin\left(x_2-x_1+\omega^2-\omega^1\right)\\[0.2em]
            x_2-x_1+\omega^2
        \end{pmatrix},
    \end{equation}and hence
    \begin{equation*}
        D_xg_\omega=\begin{pmatrix}
            1+K\cos\left(x_2-x_1+\omega^2-\omega^1\right) & -K\cos\left(x_2-x_1+\omega^2-\omega^1\right)\\
            -1 & 1
        \end{pmatrix}.
    \end{equation*}We first show that there exist $N'\in\mathbb{N}$ and $\underline{\tilde{\omega}}^{N'}\in\Omega_0^{N'}$ such that 
    \begin{equation*}
        D_xg_{\underline{\tilde{\omega}}^{N'}}(v_*)=(0,1).
    \end{equation*}

    For $n\geq 0$, we define 
    \begin{equation*}
       \tilde{x}_{*,n}:=g_{\underline{\tilde{\omega}}^n}(x_*),\qquad \tilde{w}_{*,n}:=D_xg_{\underline{\tilde{\omega}}^n}v_*,\qquad \tilde{v}_{*,n}:=\frac{D_xg_{\underline{\tilde{\omega}}^n}v_*}{\left|D_xg_{\underline{\tilde{\omega}}^n}v_*\right|},
    \end{equation*}we also write $\left[\tilde{x}_{*,n}\right]_i$ for the $i$-th coordinate of $\tilde{x}_{*,n}$. We may assume that $[v_{*,0}]_1<0$, if not, the same proof works with minor modifications. For each $i\geq 1$, we choose $\tilde{\omega}_i$ so that 
    \begin{equation*}
       \cos\left(\left[\tilde{x}_{*,i-1}\right]_2-\left[\tilde{x}_{*,i-1}\right]_1+\tilde{\omega}_i^2-\tilde{\omega}_i^1\right)=0,
    \end{equation*}Then we obtain
    \begin{equation*}
        D_{\tilde{x}_{*,i-1}}g_{\tilde{\omega}_i}=\begin{pmatrix}
           1 & 0\\
           -1 & 1
       \end{pmatrix},
    \end{equation*}and hence, by iteration, 
    \begin{equation}\label{eq:representation of w_*,i}
        \left[\tilde{w}_{*,i}\right]_1=\left[\tilde{w}_{*,0}\right]_1,\qquad \left[\tilde{w}_{*,i}\right]_2=-i\left[\tilde{w}_{*,0}\right]_1+\left[\tilde{w}_{*,0}\right]_2.
    \end{equation}Moreover, the first component of $\tilde{w}_{*,n}=D_{\tilde{x}_{*,n-1}}g_{\tilde{\omega}_n}\left(\tilde{w}_{*,n-1}\right)$ satisfies 
    \begin{equation*}
        \left[\tilde{w}_{*,n}\right]_1=\left[\tilde{w}_{*,n-1}\right]_1+K\cos\left(\left[x_{*,n-1}\right]_2-\left[x_{*,n-1}\right]_1+\tilde{\omega}_n^2-\tilde{\omega}_n^1\right)\left(\left[\tilde{w}_{*,n-1}\right]_1-\left[\tilde{w}_{*,n-1}\right]_2\right)
    \end{equation*}Therefore, $\left[\tilde{w}_{*,n}\right]_1=0$ is established by choosing $\tilde{\omega}_n$ such that 
    \begin{equation}\label{eq:choose omega_n}
        \cos\left(\left[\tilde{x}_{*,n-1}\right]_2-\left[\tilde{x}_{*,n-1}\right]_1+\tilde{\omega}_n^2-\tilde{\omega}_n^1\right)=\frac{\left[\tilde{w}_{*,n-1}\right]_1}{K\left(\left[\tilde{w}_{*,n-1}\right]_2-\left[\tilde{w}_{*,n-1}\right]_1\right)}
    \end{equation}Combining \eqref{eq:representation of w_*,i} with \eqref{eq:choose omega_n}, we deduce that there exists $N'\in\mathbb{N}$ such that 
    \begin{equation*}
        \left[\tilde{w}_{*,N'}\right]_2=-N'\left[w_{*,0}\right]_1+\left[w_{*,0}\right]_2>0,
    \end{equation*}and the right-hand side of \eqref{eq:choose omega_n} (with $n=N'$) lies in $[-1,1]$. With this choice of $\tilde{\omega}_{N'}$ we obtain 
    \begin{equation*}
        \left[\tilde{w}_{*,N'}\right]_1=0,\qquad \left[\tilde{w}_{*,N'}\right]_2>0,
    \end{equation*}and therefore $\tilde{v}_{*,N'}=(0,1)$ as desired.

    Set $\bar{x}:=g_{\underline{\tilde{\omega}}^{N'}}(x_*)$ and let $L'>0$ be an $\underline{\omega}^{N'}$-uniform upper bound on $\|\hat{f}_{\underline{\omega}^{N'}}\|_{\text{Lip}}$. Define $\epsilon_0:=\epsilon/2L'$. By Lemma \ref{lem:projective controllability of bar_x}, there exist $N_1\in\mathbb{N}$ and $\underline{\omega}^{N_1}$ such that
    \begin{equation*}
        \operatorname{dist}_{\mathbb{T}^2\times\mathbb{S}^1}\!\left(\hat{f}_{\underline{\omega}^{N_1}}(x,v),(\bar{x},(0,1))\right)<\epsilon_0.
    \end{equation*}We then set $\omega_{N_1+i}=\tilde{\omega}_{N'-i}$ for $1\leq i\leq N'$, and write $N:=N_1+N'$. Since $\hat{f}_{\underline{\omega}^{N'}}$ admits a Lipschitz constant at most $L'$, we conclude that
    \begin{equation*}
        \operatorname{dist}_{\mathbb{T}^2\times\mathbb{S}^1}\!\left(\hat{f}_{\underline{\omega}^{N}}(x,v),(x_*,v_*)\right)\leq L'\epsilon_0<\epsilon,
    \end{equation*}as claimed.
\end{proof}

\subsection{Proof of Theorem \ref{thm: exponential mixing}}
We conclude this section by summarizing the proof of Theorem \ref{thm: exponential mixing}. First, observe that the basic assumptions (\ref{hypothese H_1})-(\ref{hypothese H_3}) are trivially satisfied by the randomized Chirikov vector field defined in \eqref{velocity 1} and \eqref{velocity 2}. To apply Proposition \ref{prop:discrete time mixing}, it remains to establish geometric ergodicity of the two-point process. 

In Sections \ref{subsec: small set and aperiodicity} and \ref{subsec: Topological irreducibility}, we verify the small-set condition, aperiodicity and topological irreducibility required by Harris theorem. It therefore suffices to construct a Lyapunov function $V$ for the two-point chain. By Lemma~\ref{lem:drift condition}, this is reduced to proving geometric ergodicity of the one-point and projective processes together with positivity of the top Lyapunov exponent. These properties are established in Sections \ref{subsec: small set and aperiodicity}--\ref{subsec: Topological irreducibility}. Combining these ingredients, the two-point process is uniformly geometrically ergodic, and Theorem \ref{thm: exponential mixing} follows for all discrete times $t=n\in\mathbb{N}$. 

To extend the result to all positive real times $t>0$, we write $t=n+\tau$ with $n\in2\mathbb{N}$ and $\tau\in[0, 2)$. Applying Proposition~\ref{prop:discrete time mixing}, we obtain
\begin{equation*}
    \left|\int_{\mathbb{T}^2}g(x)h\left(\phi_{n+\tau}^\nu(x)\right)\mathrm{d}x\right|\leq \tilde{D}_\nu(\underline{\omega})e^{-\lambda_sn}\|g\|_{\dot{H}^s}\|h\circ \phi_\tau^\nu\|_{\dot{H}^s}.
\end{equation*}Since $\tau\in [0,2)$ and the flow $\phi_\tau^\nu$ is a measure preserving diffeomorphism on $\mathbb{T}^2$, there exists a uniform constant $C_*>0$ such that
\begin{equation*}
    \sup_{\tau\in[0,2)}\|h\circ \phi_\tau^\nu\|_{\dot{H}^s}\le C_*\,\|h\|_{\dot{H}^s}
\end{equation*}Substituting $n=t-\tau$ and noting that $e^{-\lambda_sn} = e^{-\lambda_s(t-\tau)}\leq e^{\lambda_s}e^{-\lambda_st}$, we conclude that 
\begin{equation*}
    \left|\int_{\mathbb{T}^2}g(x)h\left(\phi_{n+\tau}^\nu(x)\right)\mathrm{d}x\right|\leq D_\nu(\underline{\omega})e^{-\lambda_st}g\|_{\dot{H}^s}\|h\|_{\dot{H}^s},
\end{equation*}where $D_\nu(\underline{\omega}):=\tilde{D}_\nu(\underline{\omega})\,C_*\,e^{\lambda_s}$. This completes the proof of Theorem \ref{thm: exponential mixing} for continuous time.

\vspace{0.5em}
\section*{Appendix}\label{Appendix}
\stepcounter{section}
\numberwithin{equation}{subsection}
\numberwithin{theorem}{subsection}
\setcounter{equation}{0}
\setcounter{theorem}{0}
\renewcommand\thesubsection{\Alph{subsection}}

\subsection{Proof of Theorem \ref{thm: general framework for quantitative exponential mixing}}\label{subsec: Proof of Theorem 1.1}
As indicated in Section \ref{subsec: Contraction Estimate for the Transition Kernel}, we now complete the proof of Theorem \ref{thm: general framework for quantitative exponential mixing} by utilizing the contraction estimate from Proposition \ref{prop: Contraction estimate for the transition kernel}. In what follows, $C$ denotes a generic constant independent of $A$, whose value may vary from line to line.
\begin{proof}[Proof of Theorem \ref{thm: general framework for quantitative exponential mixing}]
    We define $\mathbb{Z}_0^d:=\mathbb{Z}^d\setminus\{\mathbf{0}\}$ and denote the standard orthogonal basis for $L^2(\mathbb{T}^d)$ by $\left\{e_m(x)=e^{im\cdot x}:m\in\mathbb{Z}^d\right\}$. Let $\varphi,\psi\in H^1$ be mean-zero functions with the Fourier expansions
    \begin{equation*}
        \varphi=\sum_{m\in\mathbb{Z}_0^d}\hat{\varphi}_me_m,\qquad \psi=\sum_{m\in\mathbb{Z}_0^d}\hat{\psi}_me_m.
    \end{equation*}Fix a constant $\zeta>0$, to be explicitly chosen later. For any $m,m'\in\mathbb{Z}_0^d$, we define the random variables
    \begin{align*}
        N_{m,m'}&:=\max\!\left\{n\in\mathbb{N}:\left|\int e_m(x)e_{m'}\circ f_{\underline{\omega}}^n(x)\pi(\mathrm{d}x)\right|>e^{-\zeta n}\right\},\\
        M_0&:=\max\!\left\{|m|\vee|m'|:e^{\zeta N_{m,m'}}>|m|\ |m'|\right\},\\
        \hat{D}&:=\max_{|m|,|m'|\leq M_0} e^{\zeta N_{m,m'}}.
    \end{align*}Combining the union bound with Chebyshev inequality, we deduce that
    \begin{align*}
        \mathbb{P}(N_{m,m'}>l)&\leq \sum_{n>l}\mathbb{P}\!\left(\left|\int e_m(x)e_{m'}\circ f_{\underline{\omega}}^n(x)\pi(\mathrm{d}x)\right|>e^{-\zeta n}\right)\\[0.2em]
        &\leq \sum_{n>l}e^{2\zeta n}\mathbb{E}\left|\int e_m(x)e_{m'}\circ f_{\underline{\omega}}^n(x)\pi(\mathrm{d}x)\right|^2.
    \end{align*}Moreover, we can rewrite the second moment as
    \begin{equation*}
        \mathbb{E}\left|\int e_m(x)e_{m'}\circ f_{\underline{\omega}}^n(x)\pi(\mathrm{d}x)\right|^2=\int e_{m'}^{(2)}P^{(2),n}e_{m}^{(2)}\mathrm{d}\pi^{(2)},
    \end{equation*}where we set
    \begin{equation*}
        e_m^{(2)}(x,y):=e_m(x)\overline{e_{m'}}(y),\qquad \pi^{(2)}(\mathrm{d}x,\mathrm{d}y):=\pi(\mathrm{d}x)\pi(\mathrm{d}y).
    \end{equation*}Applying estimate \eqref{eq: general quantitative drift condition for P^n}, we bound the integral
    \begin{align*}
        \left|\int e_{m'}^{(2)}P^{(2),n}e_{m}^{(2)}\mathrm{d}\pi^{(2)}\right|&\leq\int\left|P^{(2),n}e_m^{(2)}\right|\mathrm{d}\pi^{(2)}\leq \int (1+\beta V)\!\left\|P^{(2),n}e_m^{(2)}\right\|_\beta\mathrm{d}\pi^{(2)}\\
        &\leq 2^{k+1}C_1^{(d-1)p}e^{-\frac{1}{l_0}\tau^2n}\left\|e_m^{(2)}\right\|_\beta\int(1+\beta V)\mathrm{d}\pi^{(2)}.
    \end{align*}Since the invariance of $\pi^{(2)}$ under $P^{(2)}$ yields $\int V\mathrm{d}\pi^{(2)}\leq \frac{C_3}{1-\gamma}$ (thus~$\int (1+\beta V)\mathrm{d}\pi^{(2)}\leq \frac{1}{1-\gamma}$) and $\left\|e_m^{(2)}\right\|_\beta\leq 1$, we obtain the uniform decay bound
    \begin{equation*}
        \left|\int e_{m'}^{(2)}P^{(2),n}e_{m}^{(2)}\mathrm{d}\pi^{(2)}\right|\leq \frac{1}{1-\gamma}2^{k+1}C_1^{(d-1)p}e^{-\frac{1}{l_0}\tau^2n}.
    \end{equation*}Hereafter, we enforce the condition
    \begin{equation*}
        2\zeta-\frac{1}{l_0}\tau^2<0.
    \end{equation*}Then we obtain
    \begin{equation}\label{eq: general tail probability for N_m,m'}
        \mathbb{P}(N_{m,m'}>l)\leq C2^{k+1}C_1^{(d-1)p}e^{\left(2\zeta-\frac{1}{l_0}\tau^2\right)l}.
    \end{equation}In particular, $N_{m,m'}<\infty$ almost surely and we have the estimate
    \begin{equation*}
        \left|\int e_m(x)e_{m'}\circ f_{\underline{\omega}}^n(x)\pi(\mathrm{d}x)\right|\leq e^{\zeta N_{m,m'}-\zeta n}.
    \end{equation*}Consequently,
    \begin{equation}\label{eq: general quantitative estimate for varphi(x)*psi-circ f_omega^n}
        \left|\int \varphi(x)\psi\circ f_{\underline{\omega}}^n(x)\pi(\mathrm{d}x)\right|\leq e^{-\zeta n}\!\sum_{m,m'\in\mathbb{Z}_0^2}|\hat{\varphi}_m||\hat{\psi}_{m'}|e^{\zeta N_{m,m'}}.
    \end{equation}By definition of $M_0$ and $\hat{D}$, we have
    \begin{equation*}
        e^{\zeta N_{m,m'}}\leq \hat{D}|m||m'|.
    \end{equation*}Hence, substituting this into \eqref{eq: general quantitative estimate for varphi(x)*psi-circ f_omega^n} gives
    \begin{align*}
        \left|\int \varphi(x)\psi\circ f_{\underline{\omega}}^n(x)\pi(\mathrm{d}x)\right|&\leq \hat{D}(\underline{\omega})e^{-\zeta n}\left(\sum_{m\in\mathbb{Z}_0^d}|m||\hat{\varphi}_m|\right)\!\left(\sum_{m\in\mathbb{Z}_0^d}|m'||\hat{\psi}_{m'}|\right)\\
        &\leq \hat{D}(\underline{\omega})e^{-\zeta n}\|\varphi\|_{\dot{H}^{\frac{d}{2}+2}}\|\psi\|_{\dot{H}^{\frac{d}{2}+2}}.
    \end{align*}To obtain the lower-regularity bound, we invoke the following approximation lemma
    \begin{lemma}[{\hspace{-0.05em}\cite[Lemma 7.1]{BBPS-22A}}]\label{lem: control H^s' via H^s}
        Let $0<s<s'$ and let $h\in H^s$ be a mean-zero function. Then, for any $\epsilon>0$, there exists a mean-zero function $h_\epsilon\in H^{s'}$ such that the following estimates hold
        \begin{enumerate}[label=(\arabic*), ref=(\arabic*)]
            \item $\|h_\epsilon\|_{L^2}\lesssim\|h\|_{L^2}$.\vspace{0.2em}
            \item $\|h_\epsilon-h\|_{L^2}\lesssim \epsilon \|h\|_{H^s}$.\vspace{0.2em}
            \item $\|h_\epsilon\|_{H^{s'}}\lesssim_{s,s'}\epsilon^{-\frac{s'-s}{s}}\|h\|_{H^s}$.
        \end{enumerate}
    \end{lemma}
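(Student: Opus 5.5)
The plan is a standard frequency-cutoff argument. Given mean-zero $h\in H^s$ and $\epsilon>0$, I take $h_\epsilon$ to be a smoothed truncation of the Fourier series of $h$:
\begin{equation*}
h_\epsilon:=\sum_{0<|m|\le N}\hat h_m e_m,\qquad N:=\lceil \epsilon^{-1/s}\rceil .
\end{equation*}
A Littlewood--Paley smooth cutoff $\chi(|m|/N)$ would work equally well. The key point is that only Fourier multipliers with symbols bounded by one are involved. Since $\hat h_0=0$ is kept, $h_\epsilon$ is mean zero and smooth, so it lies in $H^{s'}$.

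Property (1) follows from Plancherel, because $|\hat{(h_\epsilon)}_m|\le|\hat h_m|$ for every $m$. This gives $\|h_\epsilon\|_{L^2}\le\|h\|_{L^2}$.

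For property (2) I estimate the tail directly:
\begin{equation*}
\|h-h_\epsilon\|_{L^2}^2=\sum_{|m|>N}|\hat h_m|^2\le N^{-2s}\sum_{|m|>N}|m|^{2s}|\hat h_m|^2\le N^{-2s}\|h\|_{\dot H^s}^2 .
\end{equation*}
The choice of $N$ gives $N^{-s}\le\epsilon$, so $\|h-h_\epsilon\|_{L^2}\le\epsilon\|h\|_{H^s}$.

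For property (3) I use that only frequencies $|m|\le N$ survive:
\begin{equation*}
\|h_\epsilon\|_{\dot H^{s'}}^2=\sum_{0<|m|\le N}|m|^{2s'}|\hat h_m|^2\le N^{2(s'-s)}\|h\|_{\dot H^s}^2 .
\end{equation*}
For $\epsilon\le1$ we have $N\le 2\epsilon^{-1/s}$, and then
\begin{equation*}
\|h_\epsilon\|_{H^{s'}}\lesssim_{s,s'}\epsilon^{-\frac{s'-s}{s}}\|h\|_{H^s}.
\end{equation*}
Since $h$ is mean zero, the homogeneous and inhomogeneous norms are comparable.

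The only subtle point is the range $\epsilon>1$. There I take $h_\epsilon:=0$ when $\epsilon\ge\|h\|_{L^2}/\|h\|_{H^s}$, or simply truncate at $N=1$. Then (2) holds because $\|h\|_{L^2}\le\|h\|_{H^s}\le\epsilon\|h\|_{H^s}$. Conditions (1) and (3) are trivial for $h_\epsilon=0$.

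The main obstacle is therefore only this bookkeeping of the exponent: the cutoff scale $N$ must be matched to $\epsilon$ so that the loss in (3) is exactly $\epsilon^{-(s'-s)/s}$. The choice $N\sim\epsilon^{-1/s}$ achieves this.
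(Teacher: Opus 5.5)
Your proof is correct and is the standard Fourier-truncation argument behind the cited \cite[Lemma 7.1]{BBPS-22A}; the paper itself states this lemma without proof. The only slip is in the large-$\epsilon$ bookkeeping: your alternative ``truncate at $N=1$'' violates (3) as $\epsilon\to\infty$, because $h_\epsilon=\sum_{|m|=1}\hat h_m e_m$ no longer depends on $\epsilon$ while $\epsilon^{-(s'-s)/s}\to 0$, so you must take $h_\epsilon=0$ there. That happens automatically if you cut off at $|m|\le\epsilon^{-1/s}$ without the ceiling, and then (2) and (3) hold for every $\epsilon>0$ with constant one in the homogeneous norms.
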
We apply this result with $s'=\frac{d}{2}+2$ and $s=1$. Since $\pi=\operatorname{Leb}$, fixing an $\epsilon>0$, we deduce that
    \begin{align*}
        \left|\int \varphi(x)\psi\circ f_{\underline{\omega}}^n(x)\pi(\mathrm{d}x)\right|&\leq \left|\int \varphi_{\epsilon}(x)\psi_{\epsilon}\circ f_{\underline{\omega}}^n(x)\pi(\mathrm{d}x)\right|+\|\psi\|_{L^2}\|\varphi-\varphi_\epsilon\|_{L^2}+\|\varphi_\epsilon\|_{L^2}\|\psi-\psi_{\epsilon}\|_{L^2}\\[0.2em]
        &\leq C\hat{D}(\underline{\omega})\!\left(e^{-\zeta n}\epsilon^{-\frac{d}{2}-1}+\epsilon\right)\|f\|_{\dot{H}^1}\|g\|_{\dot{H}^1}.
    \end{align*}Optimizing $\epsilon$ on the right-hand side by balancing the terms (i.e., setting $\epsilon=e^{-\frac{2\zeta}{d+4}n}$) yields the desired exponential mixing estimate
    \begin{equation*}
        \left|\int \varphi(x)\psi\circ f_{\underline{\omega}}^n(x)\pi(\mathrm{d}x)\right|\leq C\hat{D}(\underline{\omega})e^{-\frac{2\zeta}{d+4}n}\|\varphi\|_{\dot{H}^1}\|\psi\|_{\dot{H}^1}.
    \end{equation*}

    To complete the proof, we need to bound the moments of $\hat{D}$. We require $\zeta$ to be sufficiently small such that
    \begin{equation}\label{eq: general requirements for zeta}
        \frac{2\zeta-\frac{1}{l_0}\tau^2}{\zeta q}+2<0,\qquad \frac{5d}{2}+1+\frac{2\zeta-\frac{1}{l_0}\tau^2}{\zeta}<0. 
    \end{equation}For instance, one may choose 
    \begin{equation*}
        \zeta=\frac{\tau^2}{2l_0}\min\!\left\{\frac{1}{2(1+q)},\frac{1}{\frac{5d}{2}+3}\right\}.
    \end{equation*}Using \eqref{eq: general tail probability for N_m,m'}, we estimate the tail of $M_0$
    \begin{align*}
        \mathbb{P}(M_0>l)&\leq 2\sum_{m,m'\in\mathbb{Z}_0^d,\ |m|>l}\mathbb{P}\!\left(e^{\zeta N_{m,m'}}>|m||m'|\right)\\[0.2em]
        &\leq C2^{k+2}C_1^{(d-1)p}\sum_{m'\in\mathbb{Z}_0^d}|m'|^{\frac{2\zeta-\frac{1}{l_0}\tau^2}{\zeta}}\sum_{m\in\mathbb{Z}_0^d,\ |m|>l}|m|^{\frac{2\zeta-\frac{1}{l_0}\tau^2}{\zeta}}\\[0.2em]
        &\leq C2^{k+2}C_1^{(d-1)p}l^{d+\frac{2\zeta-\frac{1}{l_0}\tau^2}{\zeta}},
    \end{align*}where the last step uses the summability ensured by \eqref{eq: general requirements for zeta}. Hence $\mathbb{P}(M_0<\infty)=1$.

    Finally, we estimate $\mathbb{E}[\hat{D}^q]$ by partitioning over the values of $M_0$:
    \begin{align*}
        \mathbb{E}[\hat{D}^q]&=\sum_{l=1}^\infty \mathbb{E}\!\left(\mathbf{1}_{\{M_0=l\}}\max_{|m|,|m'|\leq l}e^{\zeta qN_{m,m'}}\right)\leq \sum_{l=1}^\infty\mathbb{P}(M_0=l)^{\frac{1}{2}}\!\left\|\max_{|m|,|m'|\leq l}e^{\zeta qN_{m,m'}}\right\|_{L^2}\\[0.2em]
        &\leq C2^{\frac{k}{2}+1}C_1^{(d-1)p/2}\sum_{l=1}^\infty l^{\frac{d}{2}+\frac{2\zeta-\frac{1}{l_0}\tau^2}{2\zeta}}\!\left(\sum_{|m|,|m'|\leq l}\!\left\|e^{\zeta qN_{m,m'}}\right\|_{L^2}\right)\\
        &\leq C2^{\frac{k}{2}+1}C_1^{(d-1)p/2}\!\left(1+\frac{\zeta q}{\frac{1}{l_0}\tau^2-2\zeta(1+q)}\right)^{\frac{1}{2}}\sum_{l=1}^\infty l^{\frac{5d}{2}+\frac{2\zeta-\frac{1}{l_0}\tau^2}{2\zeta}}.
    \end{align*}By the first condition in \eqref{eq: general requirements for zeta}, the exponent of $l$ is strictly less than $-1$, ensuring the sum converges. Thus, we conclude 
    \begin{equation*}
        \mathbb{E}[\hat{D}^q]\leq C2^{k/2}C_1^{(d-1)p/2},
    \end{equation*}which completes the proof.
\end{proof}

\subsection{Auxiliary tools} 
The following lemmas are essential for verifying the small set condition in Section \ref{subsec: Quantitative small set condition}. For detailed proofs, we refer 
the reader to \cite{Christ-85,son-25}.

\begin{lemma}[Quantitative inverse function theorem]\label{lem: Quantitative inverse function theorem}
    Assume that $F\colon \mathbb{R}^d\rightarrow \mathbb{R}^d$ is a $C^2$ function. Suppose that there exist positive constants $D,r$ such that
    \begin{equation*}\label{eq:assumption of inverse function theroem}
        \|F\|_{C^2}\leq D,\qquad |\det DF(0)|\geq r.
    \end{equation*}Then, there exist constants $C_1:=C(d)D^{-d}$ and $C_2:=C(d)D^{-2d+1}$ such that the following properties hold:
    \begin{enumerate}[label=(\arabic*), ref=(\arabic*)]
        \item $F$ is injective on $B(0,C_1r)$.
        \item $B\left(F(0),C_2r^2\right)\subset F\left(B\left(0,C_1r\right)\right)$.
        \item For all $x\in B(0,C_1r)$, we have that 
        \begin{equation*}
            \frac{1}{2}r\leq \left|\det DF(x)\right|\leq \frac{3}{2}r.
        \end{equation*}
    \end{enumerate}
    Here $C(d)$ is a constant depending on the dimension $d$.
\end{lemma}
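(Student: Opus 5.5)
The plan is the standard Newton--Kantorovich contraction argument, with every constant tracked explicitly in terms of $D$ and $r$. The first step is to control the linearization $A:=DF(0)$. From $|\det A|\ge r$ and $\|A\|\le D$ we have $\sigma_{\min}(A)\ge |\det A|/\sigma_{\max}(A)^{d-1}\ge rD^{-(d-1)}$. Equivalently, by the adjugate formula $A^{-1}=\operatorname{adj}(A)/\det A$, we get $\|A^{-1}\|\le C(d)D^{d-1}/r$. The second-derivative bound $\|D^2F\|\le D$ then gives $\|DF(x)-A\|\le D|x|$.

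For properties (1) and (3), choose $\rho=C_1 r$ with $C_1=c(d)D^{-d}$ and $c(d)$ small. On $B(0,\rho)$ we then have
\begin{equation*}
\|A^{-1}\|\,\|DF(x)-A\|\le C(d)D^{d-1}r^{-1}\cdot D\cdot c(d)D^{-d}r\le \tfrac12 .
\end{equation*}
Injectivity follows from writing
\begin{equation*}
F(x)-F(y)=A(x-y)+\int_0^1\bigl(DF(y+t(x-y))-A\bigr)(x-y)\,\mathrm{d}t,
\end{equation*}
which gives $|A^{-1}(F(x)-F(y))|\ge \tfrac12|x-y|$. For the determinant bound, use that $\det$ is locally Lipschitz on matrices of norm $\le 2D$, with constant $C(d)D^{d-1}$. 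Hence $|\det DF(x)-\det A|\le C(d)D^{d-1}\cdot D|x|\le C(d)D^d\rho$. After shrinking $c(d)$, this is at most $r/2$, which gives $\tfrac12 r\le|\det DF(x)|\le \tfrac32 r$ once one notes $|\det A|\ge r$ only from below. Here I would either read (3) relative to $|\det DF(0)|$, or, as the paper effectively does, take $r$ comparable to $|\det DF(0)|$.

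For property (2), given $y$ with $|y-F(0)|<C_2r^2$, consider $T(x)=x-A^{-1}(F(x)-y)$ on the closed ball $\bar B(0,\rho/2)$. By the estimate above, $T$ is a $\tfrac12$-contraction. It maps the ball into itself provided $|T(0)|=|A^{-1}(y-F(0))|\le \rho/4$. This holds if
\begin{equation*}
C(d)D^{d-1}r^{-1}\cdot C_2 r^2\le c(d)D^{-d}r/4,
\end{equation*}
i.e.\ if $C_2=C(d)D^{-2d+1}$. This matches the claimed exponent. Banach's fixed point theorem then produces $x\in B(0,C_1r)$ with $F(x)=y$.

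The main obstacle is purely bookkeeping: making sure the powers of $D$ come out exactly as $D^{-d}$ and $D^{-2d+1}$. In particular, one must use the adjugate bound for $\|A^{-1}\|$ rather than a cruder estimate, and handle the determinant perturbation with the right power of $D$. A minor point is that we may assume $D\ge1$ and $r\le D^d$, since $|\det A|\le \|A\|^d\le D^d$; these normalizations let the constants be absorbed into $C(d)$.
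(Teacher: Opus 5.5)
Your proof is correct. The paper does not prove this lemma; it defers to \cite{Christ-85,son-25}. Your contraction-mapping argument is the standard self-contained proof, and the bound $\|DF(0)^{-1}\|\le C(d)D^{d-1}/r$ produces exactly the exponents $C_1\sim D^{-d}$ and $C_2\sim D^{-2d+1}$ that the statement asserts.

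Your caveat about (3) is well founded, and it points to a defect in the statement rather than in your argument. At $x=0$ the claimed upper bound reads $|\det DF(0)|\le \frac32 r$, and this does not follow from $|\det DF(0)|\ge r$. For example, take $d=1$, $F=\sin$, $r=\frac12$: then $|\det DF(0)|=1>\frac34$.

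What your Lipschitz estimate actually proves is the corrected form of (3). Since $C(d)D^d\rho\le r/2\le\frac12|\det DF(0)|$, you get $\frac12|\det DF(0)|\le |\det DF(x)|\le \frac32|\det DF(0)|$ on $B(0,C_1r)$, which contains the stated lower bound.

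Your remark that the paper takes $r$ comparable to $|\det DF(0)|$ is only true up to a constant larger than $\frac32$. In the proof of Lemma~\ref{lem: minorization condition}, the paper uses the upper bound $\frac38 s$ with $r'=s/4$, but $r'$ is only a lower bound for the determinant: at $z_*$ one has $s=K^4$ while $|\det D_\xi\Phi_8|=12K^4$. There the factor $\frac{8}{3s}$ should be replaced by the reciprocal of an a priori bound, such as $|\det D_\xi\Phi_8|\le CK^{16}$ from Lemma~\ref{lem: priori estimates}. This costs only a polynomial factor in $K$, which is harmless at that point of the argument.
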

\begin{lemma}[Quantitative implicit function theorem]\label{lem: Quantitative implicit function theorem}
    Let $G\colon\mathbb{R}^{d_1+d_2}\rightarrow\mathbb{R}^{d_2}$ be a $C^2$ function. Assume that there exist $(x_0,y_0)\in\mathbb{R}^{d_1}\times\mathbb{R}^{d_2},D\geq 1$ and $r>0$ such that
    \begin{equation*}
        G(x_0,y_0)=0,\quad \|G\|_{C^2}\leq D,\quad \left|\det D_yG(x_0,y_0)\right|\geq r.
    \end{equation*}Then, there exist a constant $C_2:=C(d_1+d_2)D^{-2(d_1+d_2)+1}$ and a $C^1$ function $H\colon B(x_0,C_2r^2)\rightarrow\mathbb{R}^{d_2}$ such that for all $x\in B(x_0,C_2r^2)$, it holds that 
    \begin{enumerate}[label=(\arabic*), ref=(\arabic*)]
        \item $H(x_0)=y_0$.
        \item $G\left(x,H(x)\right)=0$.
        \item\label{implicit function: det bound of implicit function} $\left|\det D_yG\left(x,H(x)\right)\right|\geq \frac{1}{2}r$.
    \end{enumerate}
\end{lemma}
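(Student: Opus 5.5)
The plan is to reduce Lemma \ref{lem: Quantitative implicit function theorem} to Lemma \ref{lem: Quantitative inverse function theorem} by the standard graph trick. After translating, assume $(x_0,y_0)=(0,0)$. Define $F\colon\mathbb{R}^{d_1+d_2}\to\mathbb{R}^{d_1+d_2}$ by
\begin{equation*}
F(x,y):=\bigl(x,\,G(x,y)\bigr).
\end{equation*}
Then $F(0,0)=(0,0)$. Its Jacobian is block lower triangular,
\begin{equation*}
DF=\begin{pmatrix} I_{d_1} & 0\\ D_xG & D_yG\end{pmatrix},
\end{equation*}
so $\det DF(x,y)=\det D_yG(x,y)$ and in particular $|\det DF(0)|\geq r$. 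Moreover $\|F\|_{C^2}\leq \|G\|_{C^2}+C\leq 2D$, since the identity part contributes a bounded amount and $D\geq1$. Hence Lemma \ref{lem: Quantitative inverse function theorem} applies with $2D$ in place of $D$. Since the factor $2^{-2d+1}$ (with $d=d_1+d_2$) can be absorbed into $C(d)$, this yields the following:
\begin{itemize}
\item[(a)] $F$ is injective on $B(0,C_1r)$;
\item[(b)] $B(0,C_2r^2)\subset F(B(0,C_1r))$, where $C_2=C(d)D^{-2d+1}$;
\item[(c)] $|\det DF|\geq r/2$ on $B(0,C_1r)$.
\end{itemize}

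Next, construct $H$. For $x\in B(0,C_2r^2)$, the point $(x,0)$ lies in $B(0,C_2r^2)$. By (b) there exists $(x',y)\in B(0,C_1r)$ with $F(x',y)=(x,0)$. The first component of $F$ forces $x'=x$, so $G(x,y)=0$. By (a) this $y$ is the unique such point in the ball, and we set $H(x):=y$. Equivalently, $H(x)=\pi_2\circ (F|_{B(0,C_1r)})^{-1}(x,0)$, where $\pi_2$ is the projection onto the $y$-component.

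The conclusions then follow in turn.
\begin{itemize}
\item Property (1), $H(0)=0$, holds because $F(0,0)=(0,0)$ and $F$ is injective on the ball.
\item Property (2), $G(x,H(x))=0$, holds by construction.
\item Property (3) holds because $(x,H(x))\in B(0,C_1r)$, so by (c), $|\det D_yG(x,H(x))|=|\det DF(x,H(x))|\geq r/2$.
\item For the $C^1$ regularity, $DF$ is invertible at every point of $B(0,C_1r)$ by (c). The classical inverse function theorem then shows that the local inverse of the injective map $F|_{B(0,C_1r)}$ is $C^1$ near each point of the open set $F(B(0,C_1r))$. Composing with the smooth map $x\mapsto(x,0)$ and with $\pi_2$ gives $H\in C^1$.
\end{itemize}

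The only delicate points are bookkeeping. First, one must track the constant dependence $D\mapsto 2D$ so that $C_2$ keeps the form $C(d_1+d_2)D^{-2(d_1+d_2)+1}$. Second, $C^1$-smoothness of $H$ requires the injectivity and invertibility in (a) and (c) to hold on the whole ball $B(0,C_1r)$, which contains the graph of $H$, and not merely near the origin. I expect both to be routine once Lemma \ref{lem: Quantitative inverse function theorem} is granted.
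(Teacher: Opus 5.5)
Your proposal is correct. The graph map $F(x,y)=(x,G(x,y))$ reduces the statement to Lemma~\ref{lem: Quantitative inverse function theorem} with $d=d_1+d_2$, which is consistent with the stated constant $C(d_1+d_2)D^{-2(d_1+d_2)+1}$. The paper gives no argument of its own here and defers to \cite{Christ-85,son-25}.

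Two small points are worth keeping in mind. First, $\|F\|_{C^2}\le 2D$ must be read as a bound on $DF$ and $D^2F$ only, since the identity component of $F$ is unbounded; this is how the paper uses $C^2$ norms anyway. Second, you rightly use only the lower bound $|\det DF|\ge r/2$ from item (3) of that lemma, which is the part that holds in general.
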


\subsection{Application to the Pierrehumbert model}\label{sec: Application to the Pierrehumbert model}
In this appendix, we outline the application of Theorem \ref{thm: general framework for quantitative exponential mixing} to the Pierrehumbert model on~$\mathbb{T}^2$; see \cite{BCZG-23} for further details. Fix a parameter $A>0$ and let $\left\{\omega_i=\left(\omega_i^1,\omega_i^2\right)\right\}_{i\in\mathbb{N}}$ be an i.i.d sequence uniformly distributed on $\Omega_0:=[0,2\pi)^2$. The continuous RDS is driven by the one-step dynamics 
\begin{equation*}
    f_\omega(x):=f_{\omega^2}^V\circ f_{\omega^1}^H(x),
\end{equation*}where $f^H$ and $f^V$ denote the horizontal and vertical shear maps given by
\begin{equation*}
    f_{\beta}^{H}(x):=\begin{pmatrix}
        x_1 + A\sin\!\left(x_2 - \beta\right) \\
        x_2
    \end{pmatrix},\qquad 
    f_{\beta}^{V}(x):=\begin{pmatrix}
        x_1 \\
        x_2 + A\sin\!\left(x_1-\beta\right)
    \end{pmatrix}.
\end{equation*}

It is straightforward to see that the Pierrehumbert model naturally satisfies the fundamental structural conditions (\ref{hypothese H_0}). Consequently, applying Theorem \ref{thm: general framework for quantitative exponential mixing} reduces to validating Assumptions \ref{assumption: general uniform contraction}-\ref{assumption: general small set condition}. Given that the estimates required by Assumptions \ref{assumption: general derivative control} and \ref{assumption: general small set condition} have been previously established in \cite{son-25}, our remaining task is to verify the uniform contraction condition (Assumption \ref{assumption: general uniform contraction}). Unlike the Chirikov dynamics, the Pierrehumbert model exhibits a stronger form of non-degeneracy: the randomness from both independent phases $\omega^1$ and $\omega^2$ remains after taking derivatives. Leveraging this persistent randomness, we shall extract a one-step contraction estimate in the following proposition.

\begin{proposition}\label{prop:Pierrehumbert-one-step-contraction}
    There exist constants $A_0>0$ and $p,\gamma'\in \left(0,\frac{1}{2}\right)$ such that for all $A\geq A_0$, any $x\in\mathbb{T}^2$ and $v\in\mathbb{S}^1$, we have
    \begin{equation}
        \mathbb{E}\left\|D_xf_\omega(x)v\right\|^{-p}\leq \gamma',
    \end{equation}where the expectation is taken with respect to $\omega\in\Omega_0$.
\end{proposition}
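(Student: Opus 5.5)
Compute $D_xf_\omega$ for the Pierrehumbert map explicitly and reduce to Claim \ref{claim: control of a+b cos-theta}, following the proof of Lemma \ref{lem: uniform derivative drift condition}. Writing $\theta_1:=x_2-\omega^1$ and $\theta_2:=x_1+A\sin\theta_1-\omega^2$, we have
\begin{equation*}
    D_xf_\omega=\begin{pmatrix}1&0\\ A\cos\theta_2&1\end{pmatrix}\begin{pmatrix}1&A\cos\theta_1\\0&1\end{pmatrix}.
\end{equation*}
Since $\omega=(\omega^1,\omega^2)$ is uniform on $[0,2\pi)^2$, the pair $(\theta_1,\theta_2)$ is again uniform and independent for any fixed $x$: conditionally on $\omega^1$, the variable $\theta_2$ is a uniform shift. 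For $v\in\mathbb{S}^1$ set $w_1:=v_1+A v_2\cos\theta_1$. Then
\begin{equation*}
    w:=D_xf_\omega v=\bigl(w_1,\ v_2+A w_1\cos\theta_2\bigr).
\end{equation*}

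The key step is the bound $\|w\|\ge|w_2|$ together with integration over $\theta_2$ first. By Claim \ref{claim: control of a+b cos-theta},
\begin{equation*}
    \frac{1}{2\pi}\int_0^{2\pi}|v_2+Aw_1\cos\theta_2|^{-p}\,\mathrm{d}\theta_2\le \frac{C_p}{2\pi}A^{-p}|w_1|^{-p}.
\end{equation*}
This gives
\begin{equation*}
    \mathbb{E}\|w\|^{-p}\le \frac{C_p}{4\pi^2}A^{-p}\int_0^{2\pi}|v_1+Av_2\cos\theta_1|^{-p}\,\mathrm{d}\theta_1.
\end{equation*}
Next split into two cases, as in Lemma \ref{lem: uniform derivative drift condition}.
\begin{itemize}
    \item If $|Av_2|\ge\frac12$, a second application of the Claim bounds the $\theta_1$-integral by $C_p2^p$. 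The total is then at most $CA^{-p}$.
    \item If $|Av_2|<\frac12$, then $|v_1|\ge\sqrt{1-1/(4A^2)}$. Hence $|w_1|\ge|v_1|-|Av_2|\ge\frac13$ for $A$ large, so the $\theta_1$-integral is at most $2\pi\cdot3^p$. The total is again $O(A^{-p})$.
\end{itemize}

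Fix $p=\frac14$, say. Choosing $A_0$ so that $C_pA_0^{-p}\max\{2^pC_p,2\pi 3^p\}/(4\pi^2)<\frac12$ yields the statement with a $\gamma'\in(0,\frac12)$ that is independent of $A$.

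The only point needing care is the main obstacle: it is the legitimacy of treating $\theta_2$ as uniform and independent of $\theta_1$, because $\theta_2$ depends on $\omega^1$ through $A\sin\theta_1$. I would handle this with Fubini, integrating in $\omega^2$ first for fixed $\omega^1$; this is a translation of the uniform measure on the circle. The null sets where $w_1=0$ are harmless, exactly as in the Chirikov case.
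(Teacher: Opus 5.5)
Your proposal is correct and follows essentially the same route as the paper. The paper also applies Claim~\ref{claim: control of a+b cos-theta} to the second coordinate $v_{1,2}=C^V v_{1,1}+v_{0,2}$, conditionally on the first-step randomness, using $\|v_1\|^{-p}\le |v_{1,1}|^{-p}\,|v_{1,2}/v_{1,1}|^{-p}$ (your bound $\|w\|\ge|w_2|$ in another form), and then bounds $\mathbb{E}|v_{1,1}|^{-p}\le C_p$ uniformly via the two-case argument of Lemma~\ref{lem: uniform derivative drift condition}, which you write out explicitly, while your Fubini step in $\omega^2$ for fixed $\omega^1$ states the needed independence a bit more cleanly than the paper's conditioning on $v_{1,1}$.
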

\begin{proof}
    For any $x=(x_{0,1},x_{0,2})\in\mathbb{T}^2$ and $\omega=(\omega^1,\omega^2)\in\Omega_0$, the one-step Jacobian is given by
    \begin{equation*}
        D_xf_\omega=\begin{pmatrix}
            1 & C^H\\
            C^V & 1+C^HC^V
        \end{pmatrix},
    \end{equation*}where we define the random variables
    \begin{equation*}
        C^H=A\cos\!\left(x_{0,2}-\omega^1\right),\qquad C^V=A\cos\!\left(x_{0,1}+A\sin\!\left(x_{0,2}-\omega^1\right)-\omega^2\right).
    \end{equation*}It is straightforward to verify that $C^H$ and $C^V$ are independent and identically distributed. Specifically,
    \begin{equation*}
        C^V\overset{d}{=}A\cos U, \qquad U\sim \operatorname{Unif}[0,2\pi).
    \end{equation*}
    
    Given a unit vector $v=(v_{0,1},v_{0,2})\in\mathbb{S}^1$, the iterated tangent vector $v_1:=D_x f_{\omega}(x)v$ satisfies the recurrence relations
    \begin{equation*}
        v_{1,1}=v_{0,1}+C^Hv_{0,2},\qquad v_{1,2}=C^Vv_{1,1}+v_{0,2}.
    \end{equation*}Proceeding as in the proof of Lemma \ref{lem: uniform derivative drift condition}, we obtain the moment bound 
    \begin{equation}\label{eq: estimates for v_1,1}
        \mathbb{E}\!\left[|v_{ 1,1}|^{-p}\right]\leq C_p.
    \end{equation}
    
    By conditioning on $v_{1,1}$, we have 
   \begin{align}
        \mathbb{E}\big[\|v_1\|^{-p}\big]&=\mathbb{E}\!\left(\mathbb{E}\!\left[(v_{1,1}^2+v_{1,2}^2)^{-p/2}\bigm|v_{1,1}\right]\right)\notag =\mathbb{E}\!\left(|v_{1,1}|^{-p}\mathbb{E}\!\left[\big(1+(v_{1,2}/v_{1,1})^2\big)^{-p/2}\bigm|v_{1,1}\right]\right).\label{eq: law of total expectation}
    \end{align}
    Observe that the coordinate ratio expands as
    \begin{equation*}
        \frac{v_{1,2}}{v_{1,1}}=C^V+\frac{v_{0,2}}{v_{1,1}}.
    \end{equation*}Since $v_{1,1}$ is measurable with respect to $\omega^1$ only, conditionally on $v_{1,1}$, $C^V$ still has the same distribution as $A\cos U$. Therefore, applying Claim \ref{claim: control of a+b cos-theta} yields
    \begin{align*}
        \mathbb{E}\!\left[\left(1+(v_{1,2}/v_{1,1})^2\right)^{-p/2} \Bigm| v_{1,1}\right]
        &\leq \mathbb{E}\!\left[\left|v_{1,2}/v_{1,1}\right|^{-p} \bigm| v_{1,1}\right]=\mathbb{E}\!\left[\left|A\cos U+v_{0,2}/v_{1,1}\right|^{-p} \bigm| v_{1,1}\right]\\
        &\leq C_p A^{-p}.    
    \end{align*}Combining this uniform bound with \eqref{eq: estimates for v_1,1}, we conclude that
    \begin{equation*}
        \mathbb{E}\|v_1\|^{-p}\leq \mathbb{E}\!\left(|v_{1,1}|^{-p}C_pA^{-p}\right)\leq C_p^2A^{-p}<\frac{1}{2},
    \end{equation*}provided that the parameter $A$ is chosen sufficiently large.
\end{proof}

\noindent {\bf Acknowledgments.} The authors are supported by NSFC (Nos. 12231002). We are sincerely grateful to Prof. Yong Liu for his encouragement, insightful suggestions, and stimulating discussions, all of which greatly benefited this manuscript.

\normalem
\bibliographystyle{abbrv}
\bibliography{References}

\end{document}